\documentclass[11pt]{article}

\usepackage{xcolor}

\definecolor{refkey}{gray}{.8}
\definecolor{labelkey}{rgb}{0.4,0.4,0.8}

\usepackage{amsmath,amsthm,a4,latexsym,amssymb}
\usepackage[unicode]{hyperref}

\hypersetup{
    colorlinks=true,
    linktoc=all,
    linkcolor=blue,
}

\usepackage{srcltx}
\usepackage{geometry,mathrsfs,enumitem}
\usepackage{amstext,bbm,nicefrac,physics,tikz}
\usepackage{subcaption}
\usepackage{todonotes}
\usetikzlibrary{calc}
\usetikzlibrary{decorations.pathmorphing}

\makeatletter
\@addtoreset{equation}{section}
\makeatother

\newfont{\bfit}{cmbxti10 scaled 1200}

\renewcommand{\d}{{\rm d}}

\newcommand{\supp}{\mathop{\mathrm{supp}}}
\newcommand{\dist}{\mathop{\mathrm{dist}}}
\newcommand{\e}{\mathrm e}
\newcommand{\R}{\mathbb R}
\newcommand{\N}{\mathbb N}
\newcommand{\Z}{\mathbb Z}

\newcommand{\eps}{\varepsilon}

\newcommand{\E}{\mathbb E}

\renewcommand{\P}{\mathbb P}

\def\1{{\mathchoice
{1\mskip-4mu\mathrm l}
{1\mskip-4mu\mathrm l}
{1\mskip-4.5mu\mathrm l}
{1\mskip-5mu\mathrm l}}}

\newcommand{\heap}[2]{\genfrac{}{}{0pt}{}{#1}{#2}}

\newcommand{\ssup}[1]{{\scriptscriptstyle({#1})}}

\newenvironment{example}
{
\refstepcounter{theorem}
{\bf Example \thetheorem\ }\nopagebreak
}
{
\nopagebreak {\hfill\rule{2mm}{2mm}}\\
}

\newtheorem{theorem}{Theorem}[section]
\newtheorem{lemma}[theorem]{Lemma}
\newtheorem{cor}[theorem]{Corollary}

\newtheorem{definition}[theorem]{Definition}

\newtheoremstyle{thm}
{1.5ex}
{1.5ex}
{\itshape\rmfamily}
{}
{\bfseries\rmfamily}
{}
{2ex}
{}

\newtheoremstyle{rem}
{1.3ex}
{1.3ex}
{\rmfamily}
{}
{\itshape\rmfamily}
{}
{1.5ex}
{}

\theoremstyle{rem}

\newtheorem{remark}{Remark}[section]

\begin{document}

\title{\textsc{Degenerate Diffusions on Continuum Percolation and Hamilton-Jacobi-Bellman Equations}}

\maketitle

 




\begin{center}
{\sc Rodrigo Bazaes}\footnote{rodrigobazaes.com, {\tt rodrigo@rodrigobazaes.com}},
{\sc Alexander Mielke}\footnote{WIAS Berlin and Humboldt Univerit\"at Berlin, Mohrenstra\ss{}e 39,  10117 Berlin,
    {\tt alexander.mielke@wias-berlin.de}}  
and  
{\sc Chiranjib Mukherjee}\footnote{Universit\"at M\"unster, Einsteinstrasse 62,
      48149 M\"unster, Germany, {\tt chiranjib.mukherjee@uni-muenster.de}}
\\[0.5em]
\textit{Universit\"at M\"unster, WIAS Berlin and HU Berlin, Universit\"at
  M\"unster}
\\[0.5em]
\today
\end{center}

{
\renewcommand{\thefootnote}{}
\footnote{\textit{AMS Subject
Classification:} {35F21, 35B27, 49L25, 60F10, 78A48, 82B43}}
\footnote{\textit{Keywords:} Degenerate diffusions, continuum percolation, controlled diffusions, Hamilton-Jacobi-Bellman equations, stochastic homogenization,}
}

\begin{quote}{\small {Abstract: We study degenerate controlled diffusions and
Hamilton--Jacobi--Bellman equations posed on genuine continuum percolation
clusters. The
diffusion is constrained to evolve inside the infinite cluster and degenerates
according to the distance to the irregular random boundary. Under
suitable regularity and degeneracy assumptions on the diffusion matrix, we
prove that the corresponding controlled diffusion never reaches the boundary
and therefore admits a unique global strong solution.

Using this result, we establish a stochastic control representation formula
for viscosity solutions of degenerate Hamilton--Jacobi--Bellman equations
posed on the random cluster, without imposing boundary conditions.

We further verify that the structural assumptions introduced in this work,
including quantitative integrability properties of the
distance-to-the-boundary function and the associated degeneracy, hold for
concrete continuum percolation models such as the Boolean model. In
particular, although the diffusion never reaches the boundary, the boundary
geometry remains a fundamental ingredient of the theory through the admissible
degeneracy regime. The results establish a quenched framework for stochastic
control and degenerate partial differential equations on genuine continuum
percolation geometries, and identify a link between the analytic structure of
the diffusion and the stochastic geometry of the underlying cluster. They also
provide the foundational framework for the homogenization theory developed in
the companion article~\cite{BMM26}.}
}

\end{quote}


\tableofcontents

\section{Introduction}\label{sec-intro}

\subsection{Background}\label{subsec background}

Continuum percolation provides a natural class of random media in which the underlying geometry 
plays an important role. In the supercritical regime, the existence of an unbounded connected 
component $\mathcal C_\infty$ allows one to formulate stochastic processes and partial 
differential equations on a random domain. In contrast to the classical setting of 
diffusions in $\R^d$, however, the geometry of $\mathcal C_\infty$ introduces substantial 
additional difficulties: the domain is unbounded, the boundary $\partial\mathcal C_\infty$ 
may be highly irregular, and conditioning on the event $\{0\in\mathcal C_\infty\}$ destroys 
the inherent translation invariance of the underlying medium. These difficulties become 
particularly pronounced for degenerate diffusions and stochastic control problems, where both 
the geometry of the cluster and the behavior of the diffusion near the boundary play a 
fundamental role.

In this work we study stochastic control problems and Hamilton--Jacobi--Bellman equations on 
continuum percolation clusters under a class of degenerate diffusion coefficients. More precisely, 
we consider diffusion matrices $a(x,\omega)$ which are symmetric, nonnegative definite, 
sufficiently regular, and satisfy
\[
a(x,\omega)=0 \qquad\mbox{for } x\notin \mathcal C_\infty(\omega).
\]
Thus the associated diffusion is constrained to evolve inside the random domain and degenerates at and near the boundary of the cluster.

A central issue in this setting is the interplay between the regularity of the diffusion coefficient and the strength of its degeneracy. On the one hand, sufficient regularity of $a$ is required in order to ensure that the associated diffusion process remains in the interior of the cluster and does not hit the highly irregular boundary $\partial\mathcal C_\infty$. On the other hand, the large-scale analysis and homogenization theory  requires quantitative integrability properties on the degeneracy of $a$, expressed through a negative moment condition (see the discussion below Theorem \ref{theorem 3'}). These two requirements are, in principle, competing: stronger regularity restricts the admissible degeneracy, while stronger degeneracy worsens the relevant integrability properties. In continuum percolation, this tension is particularly delicate because the distance to the boundary may itself exhibit highly irregular behavior under the conditioned measure.

The first goal of the present paper is therefore to construct, under suitable regularity and degeneracy assumptions on the diffusivity $a$ and for almost every realization of the environment, a controlled quenched diffusion process evolving in $\mathcal C_\infty$ and show that it remains in the interior of the cluster for all times almost surely. As a consequence, the associated stochastic differential equation admits a unique global strong solution, cf. Theorem \ref{theorem 1'}. 

Our second main result uses this diffusion process to establish a stochastic control representation formula for viscosity solutions of degenerate Hamilton--Jacobi--Bellman equations posed on the random cluster, without requiring any boundary condition, cf. Theorem \ref{theorem 2'}.

The third main result of this article provides concrete examples of continuum percolation models for
which the structural assumptions introduced in this work can be verified. In particular, we establish
the required geometric and probabilistic estimates for the Boolean model, including the negative
moment bounds on the distance to the boundary needed for quantitative large-scale analysis in random
media. We also verify several of the structural assumptions for the continuum random cluster model,
thereby illustrating that the framework extends beyond finite-range dependent settings and includes
examples with long-range correlations, cf. Theorem \ref{theorem 3'}.

The verification theorem plays a particularly important role because the
assumptions imposed on the diffusion coefficient combine regularity,
degeneracy, and quantitative integrability properties which, in continuum
percolation, are all constrained by the geometry of the
distance-to-the-boundary function under the conditioned measure. In
particular, although the diffusion never reaches the boundary and no boundary
condition is imposed, the irregular boundary geometry nevertheless enters the
theory in a fundamental way through the admissible degeneracy regime. This
interplay becomes even more pronounced in the companion homogenization
theory~\cite{BMM26}, where the geometry of the cluster ultimately determines
the range of admissible degeneracies compatible with the coercivity of the
Hamiltonian.

Beyond its role in the homogenization theory developed in the companion work
\cite{BMM26}, the interest in the present work is also intrinsic. To the best
of our knowledge, a framework for degenerate controlled diffusions
and viscosity solutions for Hamilton--Jacobi--Bellman equations on genuine
continuum percolation clusters has not previously been available in the
literature. We now describe the mathematical setting more precisely.

\subsection{Outline of the results.}\label{subsec outline}

Let $\Omega$ denote the space of locally finite point configurations in $\R^d$, $d\ge2$, equipped with a probability measure $\P$. The translation group $\{\tau_x\}_{x\in\R^d}$ acts on $\Omega$ by
\[
\tau_x\omega=\omega-x:=\{y-x:y\in\omega\}.
\]
Given $\omega\in\Omega$, define
\[
\mathcal C(\omega):=\bigcup_{y\in\omega} B_{1/2}(y).
\]
We assume that $\P$ is stationary and ergodic under $\{\tau_x\}_{x\in\R^d}$ and that, $\P$-almost surely, $\mathcal C(\omega)$ contains a unique infinite unbounded connected component, denoted by $\mathcal C_\infty(\omega)$. The event
\[
\Omega_0:=\{0\in\mathcal C_\infty\}
\]
then has strictly positive probability, and we define the conditioned measure
\[
\P_0(\cdot):=\P(\cdot\mid \Omega_0).
\]
Unlike $\P$, the conditioned measure $\P_0$ is not invariant under translations. Next, for 
$\omega\in\Omega_0$ and $x\in\mathcal C_\infty(\omega)$, we consider the controlled diffusion
\begin{equation}\label{diff}
X_t=X_t^\omega
= x + \int_0^t \sigma(X_s,\omega)\,dB_s
+ \int_0^t (\mathrm{div}a)(X_s,\omega)\,ds
+ \int_0^t a(X_s,\omega)\cdot c(s)\,ds .
\end{equation}
Here $(B_t)_{t\ge0}$ is a Brownian motion on an auxiliary probability space $(\mathscr X, 
\mathcal F,P)$ independent of the point process, and $c:[0,T]\times\mathscr X\to\R^d$ denotes 
a progressively measurable square-integrable control.

The diffusion matrix $a(\omega)$ is assumed to be symmetric, positive semi-definite, and 
degenerately elliptic. More precisely, there exists a measurable map $\xi:\Omega_0\to(0,\infty)$ 
such that
\begin{equation}\label{a deg}
\begin{aligned}
\xi(\omega)|v|^2
\le \langle a(\omega)v,v\rangle
\lesssim |v|^2,
\qquad \forall v\in\R^d, \qquad\mbox{and}\qquad
\E_0[\xi^{-\chi}]<\infty,
\end{aligned}
\end{equation}
for a suitable $\chi\in(0,1)$; see Remark \ref{remark boundary} and the discussion below Theorem \ref{theorem 3'}. For $x\in\R^d$ we define
\[
a(x,\omega):=a(\tau_x\omega),
\]
so that $\{a(x,\cdot)\}_{x\in\R^d}$ is stationary with respect to $\P$ and $x\mapsto a(x,\omega)$ 
is sufficiently regular on the open set $ \mathcal C_\infty(\omega)$ for any fixed $\omega$. 

For $\omega\in\Omega_0$ and a square-integrable progressively measurable control $c$, let 
$P_x^{c,\omega}$ denote the law of the diffusion \eqref{diff}. Our first main result establishes 
that, under suitable assumptions on the regularity and degeneracy of $a$, the diffusion process 
never reaches the boundary of the cluster.

\begin{theorem}[Non-attainment of the boundary]\label{theorem 1'}
Under the stated assumptions \ref{assump:est-erg}-\ref{assump:inf-comp} (see Section \ref{subsec assump perc}) and \ref{f1} (see Section \ref{sec-main-results}), for every fixed $\omega\in\Omega_0$, every admissible control $c$, and every initial condition $x\in\mathcal C_\infty(\omega)$,
\[
X_t\in\mathcal C_\infty(\omega)
\qquad \text{for all } t\ge0
\qquad P_x^{c,\omega}\text{-almost surely}.
\]
Consequently, the stochastic differential equation \eqref{diff} admits a unique global strong solution. See Theorem \ref{theorem 1}.
\end{theorem}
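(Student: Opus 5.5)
The plan is a Lyapunov-function (Feller/Khasminskii-type) argument built on the distance to the boundary. Fix $\omega\in\Omega_0$ and set $\rho(x):=\dist(x,\partial\mathcal C_\infty(\omega))$ for $x\in\mathcal C_\infty(\omega)$.

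\emph{Localization.} Exhaust the cluster by the open sets $U_n:=\{x\in\mathcal C_\infty(\omega): \rho(x)>1/n,\ |x|<n\}$. On $\overline{U_n}$ the coefficients $\sigma=a^{1/2}$, $\mathrm{div}\,a$ and $a$ are Lipschitz; this uses the regularity part of \ref{f1}, together with the fact that $a$ is uniformly elliptic on compacts away from the boundary. Extend them to globally Lipschitz coefficients. Since the control $c$ is square integrable, the drift term $a(X_s)c(s)$ is Lipschitz in $X$ up to a random integrable factor, so standard theory gives a unique strong solution up to the exit time $\tau_n$ of $U_n$. Pathwise uniqueness makes these solutions consistent, which yields a unique strong solution up to the explosion time $\tau:=\lim_n\tau_n$. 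The theorem then reduces to showing $\tau=\infty$ almost surely.

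\emph{Lyapunov function.} I would take $V(x)=-\log\rho(x)+\log(1+|x|^2)$, or $V=\rho^{-\gamma}$, after first mollifying $\rho$ to a smooth $\tilde\rho$ comparable to $\rho$ with $|\nabla\tilde\rho|\lesssim1$ and $|\nabla^2\tilde\rho|\lesssim\tilde\rho^{-1}$ (a Whitney-type regularized distance). Itô's formula gives
\[
\mathcal L^c V = \tfrac12\,\mathrm{tr}(a\nabla^2V)+\langle \mathrm{div}\,a+ac,\nabla V\rangle .
\]
The degeneracy assumption in \ref{f1}, which I expect to read $a(x)\lesssim \rho(x)^{\beta}$ with $|\nabla a|\lesssim\rho^{\beta-1}$ for some $\beta\ge 2$ (or $\ge1$ in a suitable form), makes the second-order term bounded: it is of size $\rho^{\beta}\rho^{-2}$. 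The term $\langle\mathrm{div}\,a,\nabla V\rangle$ is similarly of size $\rho^{\beta-1}\rho^{-1}$. The control term satisfies $|\langle ac,\nabla V\rangle|\lesssim \rho^{\beta-1}|c|\lesssim 1+|c|^2$, and the martingale part has bracket $\langle a\nabla V,\nabla V\rangle\lesssim \rho^{\beta-2}$, which is bounded. The growth term $\log(1+|x|^2)$ is handled in the same way, using the bounded coefficients. Altogether this gives
\[
\mathcal L^cV\le C(1+|c(t)|^2).
\]

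\emph{Conclusion.} Apply Itô's formula to $V(X_{t\wedge\tau_n})$ and take expectations:
\[
E[V(X_{t\wedge\tau_n})]\le V(x)+C\,E\!\int_0^t(1+|c|^2)\,ds<\infty .
\]
On the event $\{\tau_n\le t\}$ we have $V(X_{\tau_n})\gtrsim\log n$, so $P(\tau_n\le t)\to0$. Hence $\tau=\infty$ and the process stays in $\mathcal C_\infty(\omega)$ for all times. Global strong existence and uniqueness then follow from the localization step.

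\emph{Main obstacle.} The delicate step is the regularity of the distance function. The boundary of $\mathcal C_\infty$ is a union of spherical caps, with cusps where balls meet, so $\rho$ is only Lipschitz and semiconcave away from the boundary. I would handle this either by using the regularized distance described above, or by exploiting that, locally, $\rho$ is a minimum of finitely many smooth functions $|x-y|-\tfrac12$ and applying Itô–Tanaka for maxima of $-\log$ of these functions. In either approach, the structural condition in \ref{f1} linking the degeneracy exponent to $\rho$ must be strong enough to absorb the second-derivative blow-up $\rho^{-1}$.
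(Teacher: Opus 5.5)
\textbf{There is a genuine gap}, and it is in the step you treat as routine: the bound $\mathcal L^cV\le C(1+|c|^2)$.

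\emph{What (A1) actually gives.} You derive the bound from an assumed degeneracy $a\lesssim\rho^{\beta}$, $|\nabla a|\lesssim\rho^{\beta-1}$ with $\beta\ge2$. Assumption \ref{f1} provides nothing of this kind. It only says that $x\mapsto a(x,\omega)$ is globally Lipschitz and vanishes off $\mathcal C_\infty(\omega)$, so $|a|\lesssim\rho$ and $|\nabla a|\lesssim1$. That is $\beta=1$, which by Remark~\ref{remark:sharpness} is exactly the critical exponent. At $\beta=1$ your own size estimates give $\frac12\mathrm{tr}(a\nabla^2V)=O(\rho^{-1})$ and $\langle\mathrm{div}\,a,\nabla V\rangle=O(\rho^{-1})$, with no sign information. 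So the Lyapunov inequality does not follow.

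\emph{The inequality fails for your candidates.}
\begin{itemize}
\item For $V=\rho^{-\gamma}$: in the one-dimensional model $a(x)=x$ with generator $(af')'$, one computes $\mathcal L(x^{-\gamma})=\gamma^2x^{-\gamma-1}$, which is unbounded above.
\item For $V=-\log\rho$: the two $\rho^{-1}$ terms cancel only when $a$ has special structure. On a half-space take $a=\kappa(x_1)\,x_1 I$ with $\kappa=2+\sin\log x_1$. This $a$ is Lipschitz, elliptic inside, and vanishes on the boundary. Yet $\mathcal L(-\log x_1)=-\kappa'(x_1)=-\cos(\log x_1)/x_1$ is unbounded above, even though the boundary is still not attained (the scale density $1/(\kappa x_1)$ is not integrable at $0$).
\end{itemize}
A pointwise supermartingale barrier would therefore have to be tailored to $a$, as the scale function is in dimension one. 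Assumption \ref{f1} gives no general way to build one. The second-order regularity of $\rho$, which you flag as the main obstacle, is not the real issue.

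\emph{How the paper avoids this.} It never uses pointwise generator bounds. For $c\equiv0$ the diffusion is symmetric with respect to Lebesgue measure. Lemma~\ref{lemma Dirichlet} bounds $P_\mu[\sup_{t\le T}|f(X_t)|\ge\ell]$ by $\frac{\e}{\ell}\big(\|f\|_{L^2(\mu)}^2+T\mathcal E(f,f)\big)^{1/2}$. So it suffices that the barrier $f=(\log^+\frac1{d})^{\eps}$ has finite Dirichlet energy. That uses only three facts:
\begin{itemize}
\item the upper bound $|a|\lesssim d$;
\item $|\nabla d|=1$ almost everywhere;
\item the boundary-layer estimate $|\{d<r\}|\lesssim r$.
\end{itemize}
Together these give $\int d^{-1}(\log\frac1d)^{2\eps-2}<\infty$ for $\eps<\frac12$. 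No second derivatives of $d$ and no cancellations are needed. The paper then passes from $\mu$-a.e.\ starting points to every starting point by continuity of $x\mapsto P_x(\tau<\infty)$. It extends to arbitrary $c\in\mathbf C_T$ by the relative-entropy (Girsanov) bound.

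Your localisation and your handling of $\langle ac,\nabla V\rangle$ are fine. To close the argument, you would need either to replace the Lyapunov step by such an energy/capacity argument, or to impose a stronger degeneracy than \ref{f1} provides.
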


We refer to Lemma~\ref{lemma} and Corollary~\ref{cor example a} for an explicit example of a diffusion matrix a satisfying the assumptions of the theorem, and to Remark~\ref{remark:sharpness} for a discussion of their significance. Theorem \ref{theorem 1'} is quite important for the stochastic control problem. Because trajectories remain in the interior of the cluster, no boundary condition is required for the associated Hamilton--Jacobi--Bellman equation.

For a uniformly continuous function $f:\R^d\to\R$, define
\[
u_\eps(t,x,\omega)
=
\sup_c \eps\,J^\omega_{f_\eps}
\left(\frac t\eps,\frac x\eps,c\right),
\]
where
\[
J_f^\omega(t,x,c)
=
E^{P_x^{c,\omega}}
\left[
f(X_t)-\int_0^t L(X_s,c(s),\omega)\,ds
\right].
\]
with $L(q,\omega)=\sup_{p\in\R^d}[\langle p,q\rangle_a- H(p,\omega)]$ being the Lagrangian corresponding to a convex Hamiltonian $p\mapsto H(p,\omega)$ satisfying a suitable growth condition $H(p,\omega) \equiv \|p\|_a^\alpha$ with $\alpha>1+\delta$  -- we refer to Section \ref{sec-HJB} for precise definitions and assumptions. Our second main result shows that this value function yields a viscosity solution of the Hamilton--Jacobi--Bellman equation posed on the random cluster.

\begin{theorem}[Control representation]\label{theorem 2'}
Under the stated assumptions \ref{assump:est-erg}-\ref{assump:inf-comp}, \ref{f1}, and \ref{f2}-\ref{f4} (see Section \ref{sec-HJB}), the value function $u_\eps$ is a viscosity solution of the Hamilton--Jacobi--Bellman equation \eqref{eq-HJB'} on
\[
(0,\infty)\times\mathcal C_\infty(\omega),
\]
in the class of viscosity solutions of at most linear growth, without requiring any boundary condition. See Theorem \ref{theorem 2}.
\end{theorem}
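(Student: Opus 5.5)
We would follow the classical route from stochastic control to viscosity solutions, the dynamic programming principle (DPP), with Theorem \ref{theorem 1'} replacing any boundary analysis. By the scaling $u_\eps(t,x)=\eps\,u_1(t/\eps,x/\eps)$ it suffices to treat $\eps=1$, i.e.\ $u(t,x,\omega)=\sup_c J_f^\omega(t,x,c)$ for fixed $\omega\in\Omega_0$.

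\emph{Step 1: a priori bounds and continuity.} Using the bounds \eqref{a deg} on $a$, the uniform continuity of $f$, and the growth assumptions on $H$ (hence on $L$), we first show that $u$ is finite with at most linear growth in $x$. Choosing $c\equiv0$ gives the lower bound $u\ge E[f(X_t)]-tL(\cdot,0)$. For the upper bound we use that $L(q)\gtrsim \|q\|_a^{\alpha'}$ with $\alpha'=\alpha/(\alpha-1)$, which penalizes large controls. Next we prove that $u$ is continuous on $[0,\infty)\times\mathcal C_\infty(\omega)$. For this we restrict to a compact set $K\Subset\mathcal C_\infty(\omega)$, on which $a$ is uniformly elliptic and Lipschitz by \ref{f1}. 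Pathwise uniqueness (Theorem \ref{theorem 1'}) and standard $L^2$-stability estimates, localized by the exit time from a neighbourhood of $K$, then give continuity of $x\mapsto X^x$ uniformly over controls with a bounded energy $E\int_0^t|c|^2\,ds$. Near-optimal controls can be restricted to such bounded-energy classes because of the coercivity of $L$ in $c$.

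\emph{Step 2: the DPP.} We prove, for every stopping time $\theta\le t$,
\[
u(t,x)=\sup_c E^{P_x^{c,\omega}}\Big[u(t-\theta,X_\theta)-\int_0^\theta L(X_s,c(s),\omega)\,ds\Big].
\]
We follow the standard measurable-selection or countable-partition argument (Fleming--Soner, Ch.~IV). The inequality ``$\le$'' comes from conditioning on $\mathcal F_\theta$ and the strong Markov property of the strong solution. The inequality ``$\ge$'' comes from concatenating $\delta$-optimal controls chosen on a countable partition of $\mathcal C_\infty(\omega)$, using the continuity from Step 1. Theorem \ref{theorem 1'} is essential here: it guarantees that $X_\theta\in\mathcal C_\infty(\omega)$ almost surely, so that $u(t-\theta,X_\theta)$ is defined and no boundary term appears.

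\emph{Step 3: the viscosity inequalities.} Take a test function $\varphi\in C^{1,2}$ touching $u$ from above at an interior point $(t_0,x_0)$, and let $\theta=h\wedge\tau_{B_r(x_0)}$ with $B_r(x_0)\Subset\mathcal C_\infty(\omega)$. For the subsolution property, we fix a constant control $c\equiv q$ and apply It\^o's formula to $\varphi$ along \eqref{diff} up to time $\theta$; this is legitimate because the coefficients are smooth on $B_r(x_0)$. Dividing by $h$ and letting $h\to0$ gives
\[
-\partial_t\varphi+\mathrm{div}(a\nabla\varphi)+\langle a q,\nabla\varphi\rangle-L(x_0,q)\le 0 .
\]
Taking the supremum over $q$ recovers $H(\nabla\varphi)$ via $\sup_q[\langle q,p\rangle_a-L(q)]=H(p)$, which is convex duality. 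The supersolution property is obtained the same way, by contradiction with $\delta$-optimal controls. For this we need $P(\tau_{B_r}<h)=o(h)$ uniformly over controls of bounded energy, and the coercivity of $L$ then lets us discard large controls.

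\emph{Main obstacle.} The hardest point is the uniformity in Steps 1 and 3 over unbounded controls near the degenerate region. Controls can push trajectories toward the boundary, where $a\to0$ and the constants deteriorate. We would handle this by coupling the energy penalty from $L$ with the non-attainment argument of Theorem \ref{theorem 1'}. That argument presumably rests on a Lyapunov function blowing up at $\partial\mathcal C_\infty$; we would apply it to controlled paths with a control-dependent bound $E\int_0^t|a^{1/2}c|^2\,ds$. The goal is a localization estimate $P(\tau_{K_n}\le t)\to0$ uniformly over controls of bounded cost, where $K_n$ exhausts $\mathcal C_\infty(\omega)$. This estimate would give both the local continuity in Step 1 and the justification of It\^o's formula in Step 3.
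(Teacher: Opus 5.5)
\textbf{Main gap: continuity of $u$.} The gap is the continuity of $u$ claimed in Step 1, which your countable-partition proof of ``$\ge$'' in Step 2 relies on. The paper never needs continuity. Definition \ref{def:SubSuperSol} is phrased with the envelopes $u^*,u_*$, and the paper only checks the hypotheses of \cite[Theorem 7.4]{T13}: interior regularity of $a,\sigma,\mathrm{div}\,a,H$; strong well-posedness and non-attainment from Theorem \ref{theorem 1}; and local boundedness. The last is Lemma \ref{le:V.local.bdd}, which coincides with your a priori bounds. The paper then reverses time and rescales.

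Your route to continuity does not work. By \eqref{eq:H1-L}, a bounded cost controls only $E\int\|c\|_a^{\alpha'}\,\d s$. This bounds neither $E\int|c|^2\,\d s$, because $a$ degenerates, nor, when $\alpha>2$ (so $\alpha'<2$, the regime of Remark \ref{remark boundary}), the Girsanov entropy of order $E\int\|c\|_a^2\,\d s$ on which your localization relies.

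In fact, for $\alpha>2$ the uniform estimate $\sup_{\mathrm{cost}\le M}P^c(\tau_{K_n}\le t)\to0$ is false. Take the one-dimensional model $a(x)=x$ near $x=0$, so $\sigma=\sqrt{2x}$ and $\mathrm{div}\,a=1$.
\begin{itemize}
\item The feedback $c=-k/x$ with $k>1$ gives constant drift $1-k<0$, so the process hits $0$ in finite time.
\item Its cost rate is $\|c\|_a^{\alpha'}=k^{\alpha'}x^{-\alpha'/2}$, and the occupation density is bounded near $0$. The expected cost is therefore of order $\int_0 x^{-\alpha'/2}\,\d x<\infty$.
\item Switching this control off once $X$ reaches level $1/n$ gives bounded, hence admissible, controls. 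Their cost is bounded uniformly in $n$, and they reach $\{x\le 1/n\}$ before a fixed time with probability bounded below, uniformly in $n$.
\end{itemize}
So neither your Step 1 continuity nor the Step 2 argument built on it is supported. The remedy is to drop continuity. Use a weak DPP stated with $u^*,u_*$ (Bouchard--Touzi), which needs only local boundedness and, for each \emph{fixed} $c\in\mathbf C_T$, semicontinuity of $x\mapsto J_f^\omega(t,x,c)$. That per-control localization can be obtained from the entropy argument of Theorem \ref{theorem 1}, because each single $c\in\mathbf C_T$ has $E\int_0^T|c|^2\,\d s<\infty$.

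\textbf{Step 3: sub- and supersolution are swapped.} If $\varphi\ge u$ touches at $(t_0,x_0)$ and you freeze a control $q$, the DPP gives only $\varphi(t_0,x_0)\ge E[u(t_0-\theta,X_\theta)-\int_0^\theta L\,\d s]$. You cannot replace $u$ there by the larger $\varphi$. The inequality you display reads $\partial_t\varphi\ge\mathrm{div}(a\nabla\varphi)+H(\nabla\varphi)$ after taking $\sup_q$. That is the supersolution inequality, and it comes from touching from below.

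The subsolution property (touching from above) is the one involving all controls, and it needs no uniform exit estimate. Argue by contradiction:
\begin{itemize}
\item By duality, $-\partial_t\varphi+\mathrm{div}(a\nabla\varphi)+\langle aq,\nabla\varphi\rangle-L(x,q)\le-\eta$ on a small neighbourhood $N$ for every $q\in\R^d$.
\item After making the maximum strict, $u^*\le\varphi-\gamma$ on the parabolic boundary of $N$.
\item Apply It\^o's formula up to the exit time from $N$, which is valid for any admissible control, and combine it with the DPP ``$\le$'' written with $u^*$.
\end{itemize}

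\textbf{Normalization.} Your generator $\mathrm{div}(a\nabla\cdot)$ is the correct one for \eqref{eq:controlled-sde} with $a=\frac12\sigma\sigma$. It differs by a factor $2$ from the diffusion term in \eqref{eq-HJB'}. The paper's own computation of $\mathcal H$ is inconsistent on this point as well, so one normalization has to be fixed.
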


Finally, we verify that the structural assumptions imposed on the geometry of the cluster and on the degeneracy of $a$ hold for concrete continuum percolation models.

\begin{theorem}[Verification for continuum percolation models]\label{theorem 3'}
The assumptions \ref{assump:est-erg}-\ref{assump:dist} stated in Section \ref{subsec assump perc} hold for the continuum Boolean percolation model under suitable parameters. In addition, the structural assumptions \ref{assump:est-erg}-\ref{assump:inf-comp} are verified for the continuum random cluster model. See Theorem \ref{theorem 3}.
\end{theorem}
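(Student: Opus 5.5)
We will verify the assumptions separately for the two models. We work with the natural guesses for their content: stationarity and ergodicity, uniqueness of the infinite cluster, a quantitative inner-regularity property of $\mathcal C_\infty$, and a negative-moment bound on the distance to the boundary under $\P_0$. The precise formulations are only given in Section \ref{subsec assump perc}, so the steps below are planned against these guesses.

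\textbf{Structural assumptions for the Boolean model.} Let $\omega$ be a homogeneous Poisson point process of intensity $\lambda>\lambda_c(d)$, with balls of radius $1/2$.
\begin{enumerate}[label=(\roman*)]
\item Stationarity and ergodicity (in fact mixing) of $\P$ under $\{\tau_x\}$ are standard for Poisson processes.
\item Existence and uniqueness of the infinite cluster for $\lambda>\lambda_c$ are classical (Meester--Roy, via a Burton--Keane argument). This gives $\P(\Omega_0)>0$.
\item For any local geometric regularity of the cluster, I would use a renormalisation or coarse-graining argument. I would compare with supercritical site percolation on a lattice of good boxes, and use stochastic domination by product measures (Liggett--Schonmann--Stacey), since the Boolean model has finite range dependence. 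This gives exponential tails for the probability that a box is bad.
\end{enumerate}

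\textbf{Distance to the boundary for the Boolean model.} The key quantitative input is $\E_0[\dist(0,\partial\mathcal C_\infty)^{-\chi}]<\infty$ for the relevant $\chi$; this should be what makes $\xi^{-\chi}$ integrable in \eqref{a deg}. Write $\rho:=\dist(0,\partial\mathcal C_\infty)$. Since $\P(\Omega_0)>0$, it suffices to bound
\[
\P(0\in\mathcal C,\ \rho<r)\lesssim r \qquad\text{as } r\to0 .
\]
On $\{0\in \mathcal C\}$, we have $\rho<r$ only if the uncovered set comes within distance $r$ of $0$. Since $\mathcal C$ is a finite union of balls locally, this means $0$ lies within $r$ of some sphere $\partial B_{1/2}(y)$ (or their intersections). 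Such spheres come from points $y$ in the shell $\{1/2-r\le|y|\le 1/2+r\}$, which has volume $O(r)$. By Mecke's formula, the expected number of such points is $O(\lambda r)$.

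This yields $\P_0(\rho<r)\le Cr$. Hence
\[
\E_0[\rho^{-\chi}]=\int_0^\infty \chi\, r^{-\chi-1}\P_0(\rho<r)\,dr<\infty \qquad\text{for all }\chi<1,
\]
which matches the restriction $\chi\in(0,1)$ and explains it. Plugging in the explicit choice of Lemma \ref{lemma} and Corollary \ref{cor example a} (presumably $\xi\asymp\rho^{\beta}$) then gives the moment condition, provided $\beta\chi<1$.

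\textbf{Continuum random cluster model.}
\begin{enumerate}[label=(\roman*)]
\item I would construct stationary Gibbs measures as limits of finite-volume specifications with periodic or free boundary conditions, averaged over translations.
\item Ergodicity would come from extremal decomposition.
\item Uniqueness of the infinite cluster would again follow from a Burton--Keane argument. This requires the finite-energy property, which holds since the cluster-weight density ratio $q^{\Delta k}$ is bounded when one point is inserted.
\item Existence of an infinite cluster for large activity would follow by stochastic domination from below by a Poisson process of smaller intensity (for $q\ge1$, the conditional intensity is bounded below by $\lambda\min(1,q^{-1})$).
\end{enumerate}

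\textbf{Main obstacle.} I expect the main obstacle to be the quantitative regularity assumption (part (iii) of the Boolean structural step) under the conditioned measure $\P_0$. Conditioning on $\Omega_0$ breaks translation invariance and introduces non-local dependence. I would handle it by bounding events under $\P$ and dividing by $\P(\Omega_0)$, together with the renormalisation exponential tails.
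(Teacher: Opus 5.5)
\textbf{(P6) is only half proved.} Your treatment of (P1)--(P3) matches the paper, which quotes these from \cite{MR96}. Your bound $\P_0(\rho<r)\le Cr$ is the paper's shell argument: $\partial\mathcal C_\infty\subset\bigcup_{y\in\omega}\partial B_{1/2}(y)$ forces a Poisson point in a shell of volume $O(r)$.

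The gap is that (P6) is an \emph{equivalence}: $\E_0[\mathrm d(0,\partial\mathcal C_\infty)^{-\chi}]<\infty$ \emph{if and only if} $\chi\in(0,1)$. An upper bound on $\P_0(\rho<r)$ gives finiteness for $\chi<1$ only. It says nothing about $\chi\ge1$, so it does not explain the restriction. You need the matching lower bound $\P_0(\rho<r)\ge c\,r$, and this is not a formality. A Poisson point in the thin shell does not by itself produce an \emph{uncovered} point within $r$ of the origin, since other grains may cover it. Nor does it place that point on the boundary of the \emph{infinite} cluster, or ensure $0\in\mathcal C_\infty$.

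The paper obtains the lower bound from three independent events:
\begin{itemize}
\item $L_r$: exactly one Poisson point $x_j$ lies in a thin sector $\{\tfrac12-\tfrac r3<|x|<\tfrac12,\ x/|x|\in\Gamma\}$ and no other point lies in $B_{2/3}(0)$. This has probability $\asymp r$. It makes $y_j=-\frac{1-2|x_j|}{2|x_j|}x_j$ an uncovered point of $\partial B_{1/2}(x_j)$ with $|y_j|<r$, while $0\in B_{1/2}(x_j)$.
\item $J$: each ball of a fixed chain $U_1,\dots,U_N\subset D\setminus B_{2/3}(0)$ contains a Poisson point, linking $B_{1/2}(x_j)$ to a distant box $Q'$.
\item $H_D$: $Q'$ is connected to infinity inside $D^c$.
\end{itemize}
Independence of Poisson counts on disjoint regions gives $\P(0<\rho<r)\ge\P(L_r)\P(J)\P(H_D)\ge c\,r$. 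Hence $\E_0[\rho^{-\chi}]=\infty$ for $\chi\ge1$.

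\textbf{(P5) and (P4)(b) are not addressed.} (P5) is the Harris--FKG inequality for the Poisson process and only needs to be quoted (\cite[Theorem 2.2]{MR96}). (P4)(b) asks for exponential tails under $\P_0$ of the first $k\ge1$ with $ke\in\mathcal C_\infty$. Bounding under $\P$ and dividing by $\P(\Omega_0)$ is the right reduction, but you still need a source of independent trials along the axis. For $d\ge3$ the paper uses slab percolation: for large $L$ a slab of thickness $L$ percolates \cite{T93}. Orient it transversally to $e$; its infinite cluster lies in $\mathcal C_\infty$. The translates by $kLe$ of the event that one of $e,\dots,Le$ belongs to it depend on disjoint slabs. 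This gives $\P(|\mathfrak v_e|\ge Lt,\ 0\in\mathcal C_\infty)\le(1-p_L)^{\lfloor t\rfloor}$. Your renormalisation sketch is really aimed at (P4)(a), which the paper takes from \cite[Lemma 3.4]{CGY11}.

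\textbf{Small error in the CRCM part.} An inserted grain can merge more than two clusters; already in $d=2$ a grain of radius $\tfrac12$ can meet three mutually disjoint grains. So $\Delta N_{cc}$ is only bounded below by $1-K_d$ for a packing constant $K_d$. The Papangelou intensity is therefore bounded below by $\zeta q^{1-K_d}$, not $\zeta\min(1,q^{-1})$. The Poisson domination argument still works with this constant.
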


We point out that the negative moment condition $\E_0[\xi^{-\chi}]<\infty$ in \eqref{a deg} above is not required for the well-posedness results established in the present paper. Rather, it is necessary for our companion work in \cite{BMM26}. In this context, and as previously mentioned, the verification theorem above plays a particularly important role in view of the competing nature of the assumptions imposed on the diffusion coefficient. Indeed, sufficient regularity of $a$ is needed to ensure that the diffusion does not reach the highly irregular boundary $\partial\mathcal C_\infty$, whereas the homogenization theory needs the quantitative integrability of the degeneracy. In continuum percolation these conditions are intertwined through the geometry of the distance-to-the-boundary function under the conditioned measure $\P_0$. A natural example is given by
\[
\xi(\omega)\sim d(0,\partial\mathcal C_\infty(\omega)),
\]
or more precisely by a suitable regularized distance function; see Section \ref{sec exmaple a}. Theorem \ref{theorem 3'} shows that these competing requirements are simultaneously realizable in concrete continuum percolation models.


\subsection{Continuum percolation and assumptions}\label{sec-assume-models}
In this section we will provide the mathematical layout for the continuum percolation model and the related assumptions.

\subsubsection{Point processes.}\label{subsec pointPalm}

Fix an integer $d\geq 2$, and let $\Omega$ be the space of all locally finite
point subsets of $\R^d$. We denote by $\mathcal{B}(\R^d)$ the Borel
$\sigma$-algebra on $\R^d$. The Lebesgue measure will be denoted by $\lambda$
(or by $\lambda_d$ when we need to emphasize the dimension). We endow $\Omega$
with the smallest $\sigma$-algebra $\mathcal{G}$ that makes the maps
$\omega\mapsto \#(\omega\cap A)$ measurable for all
$A\in \mathcal{B}(\R^d)$, where $\#(\omega\cap A)$ denotes the cardinality of
$\omega\cap A$. A \textit{point process} is a probability measure $\P$ on
$(\Omega,\mathcal{G})$.

On $\Omega$, the group of translations $(\tau_x)_{x\in \R^d}$ acts naturally as
\[
\tau_x \omega:=\omega-x = \big\{ y-x  : y\in \omega \big\}.
\]
We say that a point process is {\it stationary} if
\begin{equation}\label{def-stationary}
\P\circ \tau_x = \P\qquad\forall x\in \R^d.
\end{equation}
A stationary point process is ergodic with respect to $(\tau_x)_{x\in \R^d}$
if
\begin{equation}
  \label{eq:erg-def}
 \forall\, A\in \mathcal{G}\  \forall \, x\in
     \R^d : \qquad \tau_x A= A\quad \Longrightarrow \quad \P(A)\in \{0,1\}.
\end{equation}
The \textit{intensity measure} of $\P$ is the measure on
$(\R^d,\mathcal{B}(\R^d))$ given by
\begin{equation}\label{eq:Theta-def}
 \Theta(A):= \int \#(\omega \cap A) \P(\d\omega)= \E[\#(\omega\cap A)].
\end{equation}
Here and throughout the sequel, $\E$ will denote expectation with respect to
$\P$. Notice that when $\P$ is stationary and $\Theta$ is locally finite, there
exists $\zeta\in (0,\infty)$ such that $\Theta=\zeta \lambda$. We call $\zeta$
the \textit{intensity} of the point process.

\subsubsection{Palm measures.}

On an intuitive level, {\it Palm measures} formalize the idea of the
distribution of a point process conditioned on containing a fixed point
$x\in \R^d$. First, we define the measure $\mathfrak{C}$ on
$\R^d\times \Omega$ by
\begin{equation}\label{eq:camp-meas-def}
  \mathfrak{C}(A):=\E\bigg[\sum_{x\in
    \omega}\1_A(x,\tau_x\omega)\bigg],
  \qquad
  A\in \mathcal{B}(\R^d)\otimes \mathcal{G}.
\end{equation}
The measure $\mathfrak{C}$ can be decomposed when $\P$ is stationary. Indeed,
by \cite[Theorem 3.3.1]{SW08}, if $\P$ is a stationary point process with
intensity $\zeta\in (0,\infty)$, then there exists a unique measure
$\P^{\ssup 0}$ on $(\Omega,\mathcal{G})$ such that
\begin{equation}\label{eq:palm-decomp}
\mathfrak{C}=\zeta \lambda\otimes \P^{\ssup 0}.
\end{equation}
We call $\P^{\ssup 0}$ the \textit{Palm measure} corresponding to $\P$. It can
be seen as the distribution of a point process conditioned on containing the
origin; see \cite[Proposition 9.5]{LP18}. In particular,
$\P^{\ssup 0}(0\notin \omega)=0$; see \cite[Eq. (9.7)]{LP18}. More generally,
we define
\[
\P^{\ssup x}:=\P^{\ssup 0}\circ \tau_x\qquad
\mbox{for } x\in \R^d.
\]
The aforementioned decomposition allows us to disintegrate $\P$ in terms of
$(\P^{\ssup x})_{x\in \R^d}$. Indeed, by \cite[Theorem 3.3.3]{SW08}, if $\P$ is
a stationary point process with intensity $\zeta\in (0,\infty)$, then for all
$f\in L^1(\R^d\times \Omega)$, the map
$\omega\mapsto \sum_{x\in \omega}f(x,\omega)$ is measurable, and
\begin{equation}\label{eq:refined-camp-decomp}
  \E\bigg[ \sum_{x\in \omega}f(x,\omega)\bigg]
  =
  \zeta\int_{\R^d}\E^{\ssup 0}[f(x,\tau_{-x}\omega)]\,\d x
  =
  \zeta \int_{\R^d}\E^{\ssup x}[f(x,\omega)]\,\d x.
\end{equation}
Similarly, one can define the $n$-fold Palm distribution
$\P^{\ssup{x_1,\cdots,x_n}}$ for $x_1,\cdots, x_n\in \R^d$. In this case, we
have the equality
\begin{equation}\label{eq:multidim-campbell-eq}
  \E\bigg[
  \sum_{\heap{x_1,\cdots,x_n\in \omega}{\neq}}
  f(x_1,\cdots,x_n,\omega)\bigg]
  =
  \zeta^n\int_{(\R^d)^n}
  \E^{\ssup{x_1,\cdots,x_n}}[f(x_1,\cdots,x_n,\omega)]
  \,\d x_1\cdots \d x_n
\end{equation}
for $f\in L^1((\R^d)^n\times \Omega)$, where the sign $\neq$ in the sum
indicates that the sum is taken over pairwise distinct elements. Also,
$\E^{\ssup{x_1,\cdots,x_n}}$ stands for expectation with respect to the
$n$-fold Palm distribution $\P^{\ssup{x_1,\cdots,x_n}}$.

\subsubsection{Continuum percolation.}

As in Section \ref{subsec pointPalm}, each $\omega\in\Omega$ is a locally
finite point set in $\R^d$. For any $\omega\in \Omega$, we define the random
open set
\begin{equation}
  \label{eq:Def.Cinfty}
  \mathcal{C}(\omega):=\bigcup_{x\in \omega}B_{\frac 12}(x) \subset \R^d.
\end{equation}
Here $B_r(x)=\{y\in \R^d:|y-x|<r\}$ denotes an open ball centered at $x$ of
radius $r>0$. The set $\mathcal{C}(\omega)$ can be decomposed into a disjoint
union of connected components. If there is a unique unbounded, open and
connected component, then this component is denoted by
$\mathcal C_\infty(\omega)\subset \mathcal{C}(\omega) \subset \R^d$. The
boundary of $\mathcal C_\infty(\omega)$ will be denoted by
$\partial \mathcal{C}_\infty(\omega)$. Moreover, we define
\begin{equation}
  \label{eq:Omega.0}
  \Omega_0:=\big\{\omega\in \Omega\colon \mathcal C_\infty(\omega)
  \text{ exists and } 0\in \mathcal C_\infty(\omega) \big\} \subset \Omega.
\end{equation}
If $ \P(\Omega_0)>0$ (which we will assume in condition
\ref{assump:inf-comp} stated below), then we can define the conditional
probability measure $\P_0$ on $\Omega_0$ by
\[
\P_0(A):=\P(A\mid \Omega_0)
=
\frac{\P(A\cap \Omega_0)}{\P(\Omega_0)},
\qquad A\in \mathcal G.
\]
By openness and connectedness of $\mathcal C_\infty(\omega)$ whenever it exists,
every two points $x,y\in \mathcal C_\infty(\omega)$ can be connected by a curve
in $C^1([0,1];\R^d)$. The interior distance $\d_\omega$ is defined on
$\mathcal C_\infty(\omega)$ by
\begin{align}\label{def domega}
  \d_\omega(x,y)=\inf\bigg\{\:\int_0^1|\dot r(s)|\d s \,\, \colon
  &\; r\in C^1([0,1];\R^d), \
  r(0)=x,\ r(1)=y,  \\
&\;\text{and }  r(s) \in \mathcal C_\infty(\omega) \text{ for all }s \in [0,1]
\bigg\}.
\end{align}
To state condition \ref{assump:exp-dec-dist-indshift} below, we define
$n(\omega,e)\in \N$ for all $e\in \Z^d$ with $|e|_1=1$ and
$\omega\in\Omega_0$ as the first arrival of $\mathcal C_\infty$ along the
direction $e$, namely
\begin{align}
  \label{def-n}
  n(\omega,e)=\min\{k\in\N:k e\in{\mathcal C_\infty}(\omega)\}.
\end{align}

\subsubsection{Assumptions on percolation.}\label{subsec assump perc}

\begin{enumerate}[label=\textbf{(P\arabic*)}]
\itemsep0.2em

\item $\P$ is stationary and ergodic with respect to $(\tau_x)_{x\in
    \R^d}$. Moreover, $\P$ is also ergodic with respect to $\tau_e$
  for all $e\in \Z^d$ with $|e|_1=1$; namely, any $A\in \mathcal{G}$ with
  $\tau_e A = A$ satisfies $\P(A)\in \{0,1\}$.
  \label{assump:est-erg}

\item The intensity measure $\Theta$ defined in \eqref{eq:Theta-def} satisfies
  $\Theta(A)<\infty$ for any compact set $A\subset \R^d$. In particular,
  $\Theta=\zeta \lambda$ for some $\zeta\in (0,\infty)$.
  \label{assump:intensity}

\item Recall the definition of $\mathcal C(\omega)$ from \eqref{eq:Def.Cinfty}
and that of $\Omega_0$ from \eqref{eq:Omega.0}. We assume that
$\P(\Omega_0)>0$. In other words, with positive $\P$-probability, the set
$\mathcal C(\cdot)\subset \R^d$ has a unique open, unbounded and connected
component $\mathcal C_\infty(\cdot)$ containing the origin $0\in \R^d$.
\label{assump:inf-comp}

\item
\begin{enumerate}
\item The Palm distribution $\P^{\ssup{x,y}}$ defined in
\eqref{eq:multidim-campbell-eq} and the distance $\d_\omega(x,y)$ defined in
\eqref{def domega} satisfy, for some $c_0,c_1,c_2>0$,
\begin{equation}
\label{eq:chem_dist_ineq}
\P^{\ssup{x,y}} \big(\d_\omega(x,y)\geq
  c_0|x-y|_{\infty}; 0,x,y\in {\mathcal C_\infty}\big)
  \leq c_1 \e^{-c_2 |x-y|_{\infty}}
  \qquad \forall x,y\in \R^d.
\end{equation}

\item We assume that there are constants $c_3,c_4>0$ such that for all
$\varrho>0$,
\begin{equation}\label{eq:exp-dec-dist-indshift}
\P_0\big( \,|\mathfrak{v}_e(\omega)|>\varrho \,\big)
\leq c_3\e^{- c_4 \varrho},
\qquad
\mathfrak v_e:=n(\omega,e) e,
\qquad
\forall\, e\in \Z^d \text{ with }|e|_1=1.
\end{equation}
\label{assump:exp-dec-dist-indshift}
\end{enumerate}
\label{assump:chem-dist}

\item The FKG inequality is satisfied. Concretely, suppose $A_1,A_2\subset
\Omega$ are increasing events, meaning that for any locally finite point sets
$\omega, \omega^\prime \in \Omega$ in $\R^d$, if
$\omega \subset \omega^\prime$ and $\omega \in A_i$, then
$\omega^\prime \in A_i$ for $i=1,2$. For any such events $A_1,A_2$, we assume
that
\[
{\P}(A_1\cap A_2)\geq {\P}(A_1){\P}(A_2).
\]
\label{assump:fkg}

\item Let $\d(0,\partial\mathcal C_\infty)$ denote the distance from $0$ to the
boundary of the infinite cluster $\mathcal C_\infty(\omega)$. Then we assume
that
\[
\E_0[\mathrm d(0,\partial\mathcal C_\infty)^{-\chi}]<\infty
\qquad\mbox{if and only if $\chi\in(0,1)$}.
\]
\label{assump:dist}

\end{enumerate}

We briefly explain the above assumptions. Conditions
\ref{assump:est-erg}-\ref{assump:inf-comp} are self-explanatory.
Assumption \ref{assump:chem-dist}(a) guarantees that $\d_\omega$ is comparable
to the Euclidean distance with high probability. Regarding
\ref{assump:exp-dec-dist-indshift}, we note that since $\P$ is ergodic with
respect to $\tau_e$ by \ref{assump:est-erg} and since
${\P}(0\in{\mathcal C_\infty})>0$, by the Poincar\'e recurrence theorem
(cf.\ \cite[Sec.\ 2.3]{P89}) we have $n(\omega,e)<\infty$.
Then \ref{assump:exp-dec-dist-indshift} implies that moving along the
coordinate axes has good recurrence properties and that $\mathfrak v_e$
possesses all moments under $\P_0$. The condition \ref{assump:fkg} is a useful
monotonicity inequality for percolation.

Assumption \ref{assump:dist} is of conceptual relevance, as it encodes a
precise relationship between the geometry of the continuum percolation cluster
and the structural properties of the diffusion matrix $a$, namely its
degeneracy, regularity, and integrability with respect to the conditioned
measure $\P_0$; see \ref{f1} below. In Theorem \ref{theorem 3} we will verify these assumptions.

\section{Main results}\label{sec-results}
This section is organized as follows: In Section  \ref{sec-main-results} we
introduce the structural assumptions required for the construction of the
quenched degenerate diffusion and state the main non-attainment theorem
precisely. In Section \ref{sec-HJB} we formulate the associated stochastic control
problem, introduce the optimal-control representation formula, and state the
corresponding viscosity-solution result. Finally, in Section \ref{sec perc models} we introduce
the continuum percolation models considered in this work and state the
verification theorem showing that the structural assumptions are satisfied for
these models.

\subsection{Construction of the quenched degenerate diffusion on $\mathcal C_\infty$.}\label{sec-main-results}

We now state the assumptions on the coefficients of the quenched degenerate diffusion to be constructed in Theorem \ref{theorem 1} below. Denote by $\mathcal{S}_d$ the space of $d\times d$ symmetric matrices. There is a natural partial order on $\mathcal{S}_d$: we say that for $A,B\in \mathcal{S}_d$, $A\leq B$ if $B-A$ is positive semidefinite, i.e., all its eigenvalues are nonnegative. For any symmetric positive semidefinite matrix $a$ (which will be defined below in \ref{f1}), denote by $\sigma\in \mathcal{S}_d$ the unique symmetric positive semidefinite matrix such that
\[
a=\frac12\sigma\sigma.
\]
We also define the inner product $\langle \cdot,\cdot\rangle_a=\langle\cdot,\cdot\rangle_{a(\omega)}$ and the corresponding seminorm by
\begin{equation}\label{eq:a-inner-prod}
\begin{aligned}
\langle v,w\rangle_a
&:=\langle a(\omega)v,w\rangle
=\langle v,a(\omega)w\rangle,
\\
\|v\|_a
&:=\sqrt{\langle v,v\rangle_a},
\qquad \forall\, v,w\in \R^d.
\end{aligned}
\end{equation}

We recall the notation of the underlying percolation models from Section \ref{sec-assume-models}.

\begin{enumerate}[label=\textbf{(A\arabic*)}]
\itemsep0.2em

\item \label{f1} ({\bf The diffusion coefficient})
\begin{enumerate}

\item
$a:\Omega\to\mathcal S_d$ is positive semidefinite and
$a(x,\omega):=a(\tau_x\omega)$ defines a stationary process with respect to the action of $\{\tau_x\}_{x\in\R^d}$ on $(\Omega,\mathcal G,\P)$; recall \eqref{def-stationary}. Moreover, for any $\omega\in\Omega_0=\{0\in\mathcal C_\infty\}\subset\Omega$,
\[
\supp(a(\cdot,\omega))
\subset
\overline{\mathcal C_\infty(\omega)}.
\]

\item
The maps
\[
x\mapsto a(x,\omega)=a(\tau_x\omega)
\qquad\mbox{and}\qquad
x\mapsto \xi(\tau_x\omega)
\]
are globally Lipschitz continuous, and the square root map
\[
x\mapsto \sigma(x,\omega)
\]
is locally Lipschitz away from the boundary.

\item
Moreover,
\[
\mathcal C_\infty(\omega)\ni x
\mapsto
\mathrm{div}\, a(x,\omega)\in\R^d
\]
is locally Lipschitz continuous away from the boundary, and
$|\mathrm{div}\,a|$ is uniformly bounded. 

\item
The restriction of $a$ to $\Omega_0$ satisfies the following: there exists
$c_5\in(0,\infty)$ and a measurable function
$\xi:\Omega_0\to(0,\infty)$ such that $\P_0$-almost surely,
\begin{equation}\label{eq:ellip-bounds}
\xi(\omega)|v|^2
\leq
\langle a(\omega)v,v\rangle
\leq
c_5|v|^2,
\qquad
\forall\, v\in\R^d.
\end{equation}

\end{enumerate}

\end{enumerate}

We refer to Section \ref{sec exmaple a} for a concrete example of
$a=a(x,\omega)$ satisfying the above properties. We also point out that the Lipschitz-order degeneracy imposed on $x\mapsto a(x,\omega)$ in Assumption~\ref{f1}(b) is natural from the point of view of boundary non-attainment. We refer to Remark~\ref{remark:sharpness} below for a model example showing that boundary attainment in Theorem \ref{theorem 1} may indeed occur below this Lipschitz threshold.

\begin{remark}\label{remark boundary}
In our follow-up paper \cite{BMM26}, in addition to \ref{f1} we assume that there exist $\delta>0$ and
\begin{equation}\label{alpha delta relation}
\alpha>2\left(\frac{1+\delta}{1-\delta}\right)>1+\delta
\end{equation}
(see also \eqref{eq:H1-H} below) such that the function $\xi(\cdot)$ in \eqref{eq:ellip-bounds} satisfies
\begin{equation}\label{eq:xi-mom-bound}
\E_0\big[\xi^{-\chi}\big]<\infty,
\qquad
\chi=\chi(\alpha,\delta)
:=
\frac{\alpha}{2}
\frac{1+\delta}{\alpha-(1+\delta)}.
\end{equation}
This moment condition on the degeneracy is closely related to the continuity assumptions \ref{f1}(b) and Assumption \ref{assump:dist}. Because of \eqref{alpha delta relation}
it follows that $\chi<1$. The lower bound in \eqref{eq:ellip-bounds} together with the Lipschitz condition \ref{f1}(b) imply that
$\xi(\omega)
\leq
\mathrm d(0,\partial\mathcal C_\infty(\omega))$.
Consequently, the moment condition \eqref{eq:xi-mom-bound} enforces
\[
\E_0\big[
\mathrm d(0,\partial\mathcal C_\infty)^{-\chi}
\big]
<\infty,
\]
which is consistent with Assumption \ref{assump:dist}; see Corollary \ref{cor thm3}. 
\qed
\end{remark}

To construct the quenched diffusion process, we fix a filtered probability space
$(\mathscr X,\mathcal F,\{\mathcal F_t\}_{t\ge0},P)$ together with a $d$-dimensional Brownian motion $(B_t)_{t\ge0}$ adapted to the filtration $(\mathcal F_t)_{t\ge0}$. We assume that the law $P$ of $(B_t)_{t\ge0}$ is independent of the law $\P$ of the point process discussed in Section \ref{sec-assume-models}. Let
\begin{equation}\label{def-class-C}
\begin{aligned}
\mathbf C_T
=
\bigg\{
c:[0,T]\times\mathscr X\to\R^d
\colon\;
&c \mbox{ is progressively measurable and}
\\
&E^P\bigg(
\int_0^T|c(s)|^2\,\d s
\bigg)
<\infty
\bigg\}.
\end{aligned}
\end{equation}
be the class of admissible controls.\footnote{
The integrability condition defining $\mathbf C_T$ is stronger than the
weighted condition naturally associated with the degenerate diffusion matrix
$a$. We retain this formulation for simplicity and compatibility with the
stochastic-control framework used below. See Remark~\ref{remark:weighted}
after the proof of Theorem~\ref{theorem 1} for further discussion.
} 
For any $a$ satisfying \ref{f1}, $\omega\in\Omega_0$,
$x\in\mathcal C_\infty(\omega)$ and $c\in\mathbf C_T$, we consider the diffusion process
\begin{equation}\label{eq:controlled-sde}
X_t
=
x
+
\int_0^t \sigma(X_s)\,\d B_s
+
\int_0^t (\mathrm{div}\,a)(X_s)\,\d s
+
\int_0^t a(X_s)c(s)\,\d s,
\qquad
\forall\, t\ge0,
\quad\mbox{a.s.}
\end{equation}
written equivalently as the stochastic differential equation
\begin{equation}\label{SDE}
\d X_t
=
\mathbf b(X_t,c_t)\,\d t
+
\sigma(X_t)\,\d B_t,
\qquad
\mbox{where}
\qquad
\mathbf b(y,c)
=
a(y)c+\mathrm{div}\,a(y).
\end{equation}

Although the above displays hold for every fixed $\omega\in\Omega_0$, we suppress the dependence on $\omega$ in the notation. We recall from \ref{f1} that for every $x\in\R^d$ and $\omega\in\Omega_0$,
\[
\sigma(x,\omega)=\sigma(\tau_x\omega),
\qquad
a(x,\omega)=a(\tau_x\omega),
\qquad
(\mathrm{div}\,a)(x,\omega)
=
(\mathrm{div}\,a)(\tau_x\omega).
\]

Here is our first main result.

\begin{theorem}\label{theorem 1}
Assume \ref{f1} and \ref{assump:est-erg}-\ref{assump:inf-comp}. For any $\omega\in\Omega_0$ and $c\in\mathbf C_T$, let
$P^{c,\omega}_x$ denote the law of the diffusion $(X_t)_{t\ge0}$ defined by \eqref{eq:controlled-sde}, and define
\[
\tau^\omega
:=
\inf\{t>0:X_t\in\partial\mathcal C_\infty(\omega)\}.
\]
Then for any $\omega\in\Omega_0$, $x\in\mathcal C_\infty(\omega)$, $T>0$, and $c\in\mathbf C_T$,
\[
P_x^{c,\omega}\big[\tau^\omega=\infty\big]=1.
\]
Moreover, for any $\omega\in\Omega_0$, $x\in\mathcal C_\infty(\omega)$, $T>0$, and $c\in\mathbf C_T$, there exists a unique strong solution of the stochastic differential equation \eqref{SDE}.
\end{theorem}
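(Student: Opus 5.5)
We localize the problem and then use a Lyapunov function built from the distance to the boundary. Fix $\omega\in\Omega_0$, $x\in\mathcal C_\infty(\omega)$, $T>0$ and $c\in\mathbf C_T$. For $n\in\N$ let
\[
D_n:=\{y\in\mathcal C_\infty(\omega): \mathrm d(y,\partial\mathcal C_\infty)>1/n,\ |y|<n\}.
\]
On each $D_n$ the coefficients $\sigma$ and $\mathrm{div}\,a$ are Lipschitz, by \ref{f1}(b)--(c). We extend them to globally Lipschitz coefficients agreeing with the original ones on $D_n$. The drift $a(y)c(s)$ is Lipschitz in $y$ with random, time-dependent constant $\|a\|_{\rm Lip}|c(s)|$, which is square integrable in time. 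Hence the standard strong existence and uniqueness theory (Itô, or Protter's theorem for random Lipschitz coefficients) yields a unique strong solution up to the exit time $\tau_n$ of $D_n$. These solutions are consistent in $n$, so they define a unique strong solution on $[0,\tau_\infty)$ with $\tau_\infty=\lim_n\tau_n$. The claim is then that $\tau_\infty=\infty$ a.s. This gives both $\tau^\omega=\infty$ and the global strong solution.

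Explosion to infinity is excluded by standard arguments. The coefficients are bounded: $a\le c_5$, $|\mathrm{div}\,a|$ is bounded, and $\sigma$ is bounded since $a$ is. Hence $E\sup_{t\le T\wedge\tau_\infty}|X_t|^2\lesssim |x|^2+T+T\,E\!\int_0^T|c|^2<\infty$. It therefore remains to show that $\rho(X_t)$ does not reach $0$ in finite time, where $\rho(y):=\mathrm d(y,\partial\mathcal C_\infty)$.

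The key structural input is degeneracy of Lipschitz order. Since $a$ is Lipschitz with constant $L$ and vanishes on $\partial\mathcal C_\infty$ (by \ref{f1}(a) and continuity), we have $|a(y)|\le L\rho(y)$. Consequently $|\sigma(y)|^2\lesssim\rho(y)$ and $|a(y)c|\lesssim\rho(y)|c|$. Because $\partial\mathcal C_\infty$ is not smooth, $\rho$ itself is not $C^2$. I would therefore replace it by a regularized distance $\tilde\rho\in C^\infty(\mathcal C_\infty)$ (Whitney/Stein type) satisfying
\[
\tilde\rho\asymp\rho,\qquad |\nabla\tilde\rho|\lesssim1,\qquad |\nabla^2\tilde\rho|\lesssim\rho^{-1}.
\]
Take the Lyapunov function $V=-\log\tilde\rho$. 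Its derivatives satisfy $|\nabla V|\lesssim\rho^{-1}$ and $|\nabla^2V|\lesssim\rho^{-2}$. Itô's formula up to $\tau_n\wedge T$ then gives
\[
\d V(X_t)=\nabla V\cdot\sigma\,\d B_t+\Big[\nabla V\cdot(\mathrm{div}\,a+a c)+\mathrm{tr}(a\nabla^2V)\Big]\d t .
\]
Bounding each term:
\begin{itemize}
\item $\mathrm{tr}(a\nabla^2V)\lesssim \rho\cdot\rho^{-2}=\rho^{-1}$;
\item $|\nabla V\cdot ac|\lesssim|c|$;
\item the quadratic variation density is $|\sigma^T\nabla V|^2\lesssim\rho\cdot\rho^{-2}=\rho^{-1}$.
\end{itemize}
The trace term and the quadratic variation are still singular, so this naive $\log$ Lyapunov function loses exactly one power of $\rho$.

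This loss is the main obstacle. The fix I would try is to exploit the divergence-form drift. The generator without control is $\mathcal L f=\mathrm{div}(a\nabla f)=\mathrm{tr}(a\nabla^2 f)+\mathrm{div}a\cdot\nabla f$, which is symmetric. For $f=g(\tilde\rho)$ one computes
\[
\mathcal Lf=g'(\tilde\rho)\,\mathrm{div}(a\nabla\tilde\rho)+g''(\tilde\rho)\,\langle a\nabla\tilde\rho,\nabla\tilde\rho\rangle .
\]
The plan is a Feller-type test. In the direction normal to the boundary, the dynamics behave like a one-dimensional diffusion $\d r=\beta\,\d t+\sqrt{2\gamma}\,\d W$ with $\gamma\lesssim r$ and $\beta=\partial_r\gamma+O(\cdot)$. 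Since $a$ vanishes at the boundary and is Lipschitz, the model case $\gamma=r$, $\beta=1$ is a squared Bessel process of dimension $2$, whose boundary point $0$ is not attained. I would make this rigorous with $g=-\log$ as above, showing that the singular $\rho^{-1}$ contributions from the trace and from the $\mathrm{div}\,a$ drift cancel up to bounded terms. That is, I would aim for
\[
\mathcal L(-\log\tilde\rho)\le C\quad\text{and}\quad \text{quadratic variation of } \log\tilde\rho(X)\ \text{locally finite},
\]
possibly after replacing $-\log$ by $-\log\log$ or by $\tilde\rho^{-\epsilon}$ if the cancellation is only partial. Given such a bound, $E[V(X_{\tau_n\wedge T})]\le V(x)+CT+C\,E\!\int_0^T|c|\,\d s<\infty$ (using a stopping argument for the martingale part). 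On the event $\{\tau_n\le T,\ \rho(X_{\tau_n})=1/n\}$ we have $V\ge\log n-C$, so
\[
P\big(\rho\text{-exit before }T\big)\le \frac{C_T}{\log n}\to0.
\]
Letting $T\to\infty$ gives $\tau^\omega=\infty$ a.s., and combining with the non-explosion bound gives $\tau_\infty=\infty$. This yields the unique global strong solution.

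If the pointwise cancellation fails because of the irregularity of $\partial\mathcal C_\infty$ (a union of spheres, so the boundary has corners), I would work locally near each boundary point, using the distance to the finitely many relevant spheres.
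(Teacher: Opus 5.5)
Your localization, the strong solution up to $\tau_\infty$ and the non-explosion bound are fine. The step that actually proves non-attainment is missing, however, and the route you sketch for it cannot work under \ref{f1}.

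Everything rests on a pointwise bound $\mathcal L(-\log\tilde\rho)\le C$, i.e.
\[
\mathrm{div}(a\nabla\tilde\rho)\ \ge\ \frac{\langle a\nabla\tilde\rho,\nabla\tilde\rho\rangle}{\tilde\rho}-C\tilde\rho ,
\]
which is an inequality between two quantities that are both of order one. Assumption \ref{f1} gives only $|a|\lesssim\rho$ and bounded $\mathrm{div}\,a$, and these imply no such relation. The obstruction has nothing to do with the corners of $\partial\mathcal C_\infty$. On a half-line with flat boundary, $a(r)=r(2+\sin\log r)$ is Lipschitz with $r\le a\le 3r$, yet $\mathcal L(-\log r)=(a-ra')/r^2=-\cos(\log r)/r$ is unbounded above. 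The same oscillating term, divided by $\log(1/r)$, spoils $\log\log(1/\tilde\rho)$. The choice $\tilde\rho^{-\epsilon}$ fails even for $a=r$, where $\mathcal L r^{-\epsilon}=\epsilon^2r^{-\epsilon-1}$.

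In fact no function $g(\tilde\rho)$ chosen independently of $a$ can work. If $g(0+)=\infty$, then $rg''(r)$ is unbounded above along some $r_k\downarrow0$; otherwise $g''\lesssim 1/r$ and $g$ stays bounded at $0$. An admissible $a$ with $a(r_k)=2r_k$ and $a'(r_k)=0$ then gives $\mathcal L g(r_k)=2r_kg''(r_k)\to\infty$. In one dimension the right barrier is the $a$-dependent scale function $\int_r^1 \d s/a(s)$. In $d\ge2$, producing an $a$-adapted supersolution that blows up at an irregular boundary is essentially as hard as the theorem itself.

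The missing idea is to use the symmetry of $\mathrm{div}(a\nabla\,\cdot\,)$ globally, through the Dirichlet form, rather than pointwise through the generator. Then only the quadratic form $\langle a\nabla f,\nabla f\rangle\lesssim\rho|\nabla f|^2$ enters, and the drift never has to be controlled. This is what the paper does:
\begin{itemize}
\item For $f=(\log^+1/\rho)^{\eps}$ with $\eps<\frac12$, the energy is bounded by $\int\rho^{-1}(\log1/\rho)^{2\eps-2}\,\d x$. This is finite by the boundary-layer bound $|\{\rho<r\}|\lesssim r$. For $\eps=1$, i.e.\ your $-\log\tilde\rho$, it diverges; this is the same loss you observed in the quadratic variation.
\item An equilibrium-potential (capacity) estimate under the reversible measure $\mu$ yields $P_\mu\big[\sup_{t\le T}|f(X_t)|\ge\ell\big]\le\frac{\e}{\ell}\big(\|f\|_{L^2(\mu)}^2+T\mathcal E(f,f)\big)^{1/2}$. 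Hence the boundary is not attained for $\mu$-a.e.\ starting point.
\item This is upgraded to every starting point by continuity of $x\mapsto P_x(\tau<\infty)$ in the locally elliptic interior.
\end{itemize}
Since this argument is not pathwise, your It\^o bound $|\nabla V\cdot ac|\lesssim|c|$ is not available for the control. The paper instead transfers the result from $c\equiv0$ to $c\in\mathbf C_T$ by absolute continuity, via Girsanov and the relative entropy bound $H(P^c\,|\,P)\lesssim E\int_0^T|c(s)|^2\,\d s$. A Lyons--Zheng forward--backward martingale decomposition would be another way to control $f(X_t)$ by the energy alone.
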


The proof of Theorem \ref{theorem 1} is given in Section \ref{sec proof thm1}. There, the non-attainment for the reversible diffusion, corresponding to the control
$c\equiv0$, is shown by developing a finite-energy barrier construction. The result for general $c\in\mathbf C_T$ is then obtained by a relative entropy estimate. Thus the confinement mechanism is already present at the level of the reversible degenerate diffusion, while admissible controls preserve this property by absolute continuity. We refer to Remark \ref{remark:sharpness} that underlines that the Lipschitz regularity of $x\mapsto a(x,\omega)$ in Assumption \ref{f1}(b) is important from the point of view of boundary non-attainment. Section \ref{sec exmaple a} provides an explicit construction of a diffusion matrix $a$ that satisfies the smoothness and positivity conditions assumed in \ref{f1}.

\subsection{Hamilton--Jacobi--Bellman equation and representation formula}\label{sec-HJB}

We next turn to the representation formula for viscosity solutions of the Hamilton--Jacobi--Bellman equation \eqref{eq HJB eps 1}, which will be established in Theorem \ref{theorem 2}. We impose the following assumptions on the coefficients of this equation.

\begin{enumerate}[label=\textbf{(H\arabic*)}]
\itemsep0.2em

\item \label{f2} ({\bf Hamiltonian: convexity and coercivity})
The Hamiltonian $H:\R^d\times\Omega\to\R$ satisfies, for each
$\omega\in\Omega$, that $p\mapsto H(p,\omega)$ is convex. Moreover, there are constants $c_6,\dots,c_9>0$ such that for all
$(p,\omega)\in\R^d\times\Omega_0$,
\begin{equation}\label{eq:H1-H}
c_6\|p\|_a^\alpha-c_7
\leq
H(p,\omega)
\leq
c_8\|p\|_a^\alpha+c_9,
\end{equation}
and $H(\cdot,\omega)\equiv 0$ outside $\Omega_0$. Here, $\alpha>1+\delta$ and $\delta>0$ are as in \eqref{eq:xi-mom-bound}. Equivalently, for
$\alpha^\prime:=\frac{\alpha}{\alpha-1}$ and constants $c_{10},\dots,c_{13}$,
\begin{equation}\label{eq:H1-L}
c_{10}\|q\|_a^{\alpha^\prime}-c_{11}
\leq
L(q,\omega)
\leq
c_{12}\|q\|_a^{\alpha^\prime}+c_{13},
\end{equation}
where
\begin{equation}\label{def L}
L(q,\omega):=\sup_{p\in\R^d}
\big[
\langle p,q\rangle_a-H(p,\omega)
\big].
\end{equation}
Moreover, $\R^d\ni x\mapsto L(q,\tau_x\omega)$ is continuous and, in particular,
$L(\cdot,\omega)\equiv 0$ outside $\Omega_0$.

\item \label{f3} ({\bf Hamiltonian: stationarity and continuity})
For any $x\in\R^d$, we define
\[
H(x,p,\omega):=H(p,\tau_x\omega),
\qquad
L(x,q,\omega):=L(q,\tau_x\omega).
\]
Recall that by \ref{assump:est-erg}, $\P$ is stationary with respect to
$\{\tau_x\}_{x\in\R^d}$. Hence the maps
\[
x\mapsto H(x,p,\omega)=H(p,\tau_x\omega),
\qquad
x\mapsto L(x,q,\omega)=L(q,\tau_x\omega)
\]
define stationary processes with respect to $\{\tau_x\}_{x\in\R^d}$ and $\P$.

Next, we assume that there are constants $c_{14},c_{15}>0$ such that, for any
$\omega\in\Omega_0$, $x,y\in\R^d$ and $p\in\R^d$,
\begin{equation}\label{eq:H2}
|H(x,p,\omega)-H(y,p,\omega)|
\leq
(c_{14}|p|^\alpha+c_{15})|x-y|.
\end{equation}

\item \label{f4} ({\bf Initial condition})
The initial condition $f:\R^d\to\R$ is uniformly continuous. In particular, for any $\delta>0$, there exists $K_\delta>0$ such that for any $x,y\in\R^d$,
\begin{equation}
\label{eq:H6}
|f(x)-f(y)|
\leq
K_\delta |x-y|+\delta.
\end{equation}

\end{enumerate}

\noindent\begin{example}\label{exa HL} We provide a class of admissible Hamiltonians satisfying 
\ref{f2}-\ref{f3} which depends on the diffusion matrix only through the quadratic form induced 
by $a$. The construction of a diffusion matrix $a$ satisfying \ref{f1} is given in 
Corollary \ref{cor example a}. The usage of $a$  is important because $x\mapsto a(x,\omega)$ 
is globally Lipschitz, whereas the square root $x\mapsto \sigma(x,\omega)$ is only locally 
Lipschitz away from the boundary. Thus one should not define $H$ directly in terms of 
$\sigma(x,\omega)$.

For $\alpha\geq 2$, a basic example is
\[
H(x,p,\omega)
=
\bigl(1+\langle p,a(x,\omega)p\rangle\bigr)^{\alpha/2}
\asymp 1+\|p\|_{a(x,\omega)}^\alpha,
\]
and therefore the coercivity and growth assumptions in \ref{f2} are satisfied, up to additive constants.
Moreover, since $x\mapsto a(x,\omega)$ is globally Lipschitz,
\[
\big|
\langle p,a(x,\omega)p\rangle
-
\langle p,a(y,\omega)p\rangle
\big|
\leq C|p|^2|x-y|.
\]
For $\alpha\geq2$, this implies
\[
|H(x,p,\omega)-H(y,p,\omega)|
\leq
C(1+|p|^\alpha)|x-y|,
\]
which is the spatial continuity estimate required in \ref{f3}.

The associated Lagrangian is defined by the Legendre transform with respect to the weighted pairing
\[
\langle p,q\rangle_{a(x,\omega)}
=
\langle p,a(x,\omega)q\rangle.
\]
Since $H$ has superlinear growth in the seminorm $\|\cdot\|_{a(x,\omega)}$, the corresponding Lagrangian has the dual growth
\[
L(x,q,\omega)\asymp \|q\|_{a(x,\omega)}^{\alpha'}
\]
up to additive constants, where $\alpha'=\alpha/(\alpha-1)$.
\end{example}

Let us now fix the diffusion coefficient $a$ satisfying \ref{f1}, the Hamiltonian $H$ and teh Lagrangian $L$ satisfying \ref{f2} and \ref{f3}, and the initial condition $f$ satisfying \ref{f4}. For any fixed
$\omega\in\Omega_0$, $x\in\mathcal C_\infty(\omega)$, and
$c\in\mathbf C_T$, we define
\begin{align}
J_f^\omega(t,x,c)
&:=
E^{P_x^{c,\omega}}
\bigg[
f(X_t)
-
\int_0^t L(X_s,c(s),\omega)\,\d s
\bigg],
\label{eq:cost-fun-def}
\\
u(t,x,\omega)
&:=
\sup_{c\in\mathbf C_T}
J_f^\omega(t,x,c).
\label{eq:visc-sol-var-form}
\end{align}
Here, as in Theorem \ref{theorem 1}, $P_x^{c,\omega}$ denotes the law of the diffusion $(X_t)_{t\ge0}$ starting at $x\in\mathcal C_\infty(\omega)$.

For any $\eps>0$, define
\[
f_\eps(x):=\frac{1}{\eps}f(\eps x).
\]
We define the rescaled version of $u(t,x,\omega)$ by
\begin{equation}\label{eq:u-eps-def}
u_\eps(t,x,\omega)
:=
\sup_{c\in\mathbf C_T}
\eps\,
J^\omega_{f_\eps}
\bigg(
\frac t\eps,
\frac x\eps,
c
\bigg).
\end{equation}

Here is our next main result.

\begin{theorem}\label{theorem 2}
Assume \ref{f1}, \ref{f2}-\ref{f4} and
\ref{assump:est-erg}-\ref{assump:inf-comp}. Then the following hold:
\begin{enumerate}

\item
For any $\omega\in\Omega_0$ and $T>0$, the function
$u(t,x,\omega)$ defined in \eqref{eq:visc-sol-var-form} is a viscosity solution
(cf. Definition \ref{def:SubSuperSol}) of
\begin{equation}\label{eq HJB eps 1}
\begin{cases}
\partial_t u
=
\frac12\mathrm{div}\big(a(x,\omega)\nabla u\big)
+
H(x,\nabla u,\omega),
&\text{in } (0,T)\times\mathcal C_\infty(\omega),
\\
u(0,x,\omega)=f(x),
&\text{on } \mathcal C_\infty(\omega),
\end{cases}
\end{equation}
in the class of viscosity solutions of at most linear growth.

\item
For any $\eps>0$, the function $u_\eps$ defined in \eqref{eq:u-eps-def} is a viscosity solution of
\begin{equation}\label{eq-HJB'}
\begin{cases}
\partial_t u_\eps
=
\frac{\eps}{2}
\mathrm{div}\big(a\big(\frac x\eps,\omega\big)\nabla u_\eps\big)
+
H\big(\frac x\eps,\nabla u_\eps,\omega\big),
&\text{in } (0,T)\times\eps\mathcal C_\infty(\omega),
\\
u_\eps(0,x,\omega)=f(x),
&\text{on } \eps\mathcal C_\infty(\omega),
\end{cases}
\end{equation}
in the class of viscosity solutions of at most linear growth.

\item
If we additionally assume that the initial condition is uniformly bounded, i.e.,
$\sup_{x\in\R^d}|f(x)|\leq K_f<\infty$, then for each $T>0$ and
$\omega\in\Omega_0$,
\begin{equation}
\sup_{t\in[0,T]}
\sup_{\eps>0}
\sup_{x\in\eps\mathcal C_\infty(\omega)}
|u_\eps(t,x,\omega)|
<\infty.
\end{equation}

\end{enumerate}
\end{theorem}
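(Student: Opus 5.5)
The plan follows the classical dynamic-programming route to the HJB equation, with Theorem \ref{theorem 1} doing the work that a boundary condition would otherwise do.

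\textbf{Step 1: scaling, so that part 2 follows from part 1.} Substituting $y=x/\eps$ and $s=t/\eps$ in \eqref{eq:u-eps-def} gives $u_\eps(t,x,\omega)=\eps\,u^{f_\eps}(t/\eps,x/\eps,\omega)$, where $u^{f_\eps}$ is the value function \eqref{eq:visc-sol-var-form} with initial datum $f_\eps$. Since $\nabla_x u_\eps=\nabla_y u^{f_\eps}$, $\partial_t u_\eps=\partial_s u^{f_\eps}$ and $\nabla^2_x u_\eps=\eps^{-1}\nabla_y^2 u^{f_\eps}$, test functions transform accordingly. The equation for $u^{f_\eps}$ on $\mathcal C_\infty(\omega)$ therefore becomes \eqref{eq-HJB'} on $\eps\mathcal C_\infty(\omega)$. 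The datum $f_\eps$ is again uniformly continuous (with the constants in \eqref{eq:H6} rescaled), so part 2 follows once part 1 is proved for a general datum satisfying \ref{f4}.

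\textbf{Step 2: dynamic programming principle and continuity of $u$.} For part 1, Theorem \ref{theorem 1} gives, for every $c\in\mathbf C_T$, a unique strong solution that stays in the open set $\mathcal C_\infty(\omega)$ for all time. Thus all coefficients are evaluated where $\sigma$ and $\mathrm{div}\,a$ are locally Lipschitz, and no boundary condition enters. I would first prove the dynamic programming principle
\[
u(t+h,x)=\sup_{c}E\Big[u(t,X_h)-\int_0^h L(X_s,c(s))\,\d s\Big],
\]
using the strong Markov property of the strong solution and a measurable selection of $\eps$-optimal controls. Alternatively, one can restrict to piecewise-constant or feedback controls and then argue by density in $\mathbf C_T$.

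For this one needs continuity and linear growth of $u$. Linear growth follows from \eqref{eq:H6}, the bounds $|\mathbf b(y,c)|\le C(1+|c|)$ and $\|\sigma\|\le C$, and $L\ge -c_{11}$. Local continuity in $x$ should follow from stability of the SDE on compacts $K\Subset\mathcal C_\infty$: localize with the exit time from $K$, use the local Lipschitz bounds of $\sigma$ there, and control the probability of leaving $K$ before a small time uniformly for controls with bounded $L$-cost. The cost $\int L$ bounds $\int\|c\|_a^{\alpha'}$, which by the upper ellipticity bound controls the drift $ac$, since $|ac|\le C\|c\|_a$.

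\textbf{Step 3: sub- and supersolution inequalities.} Given a smooth $\varphi$ touching $u$ from above at $(t_0,x_0)$ with $x_0$ in the interior, apply the dynamic programming principle with constant controls $c\equiv q$ and Itô's formula on $[0,h]$. The generator is
\[
\tfrac12\operatorname{tr}(\sigma\sigma^{T}\nabla^2\varphi)/2+\langle \mathrm{div}\,a+aq,\nabla\varphi\rangle=\tfrac12\mathrm{div}(a\nabla\varphi)+\langle q,\nabla\varphi\rangle_a ,
\]
using $a=\frac12\sigma\sigma$; the constant factors are to be checked against the convention in \eqref{eq HJB eps 1}. Dividing by $h$ and taking the supremum over $q$ gives $\sup_q[\langle q,p\rangle_a-L(q)]=H(p)$ by convexity and Legendre duality. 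This step needs $H=L^*$ in the $a$-pairing, which holds because $a>0$ on $\Omega_0$. That yields the subsolution inequality.

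For the supersolution inequality, argue by contradiction: choose near-optimal controls, use Itô's formula, and use the fact that for $h$ small the process stays in a ball $B_r(x_0)\Subset\mathcal C_\infty$ with high probability. The expected $L$-cost bounds $E\int_0^h|c|^{\alpha'}$, which makes the exit probability small.

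\textbf{Step 4: the $L^\infty$ bound (part 3) and the main obstacle.} For part 3, the bound is $\eps|J|\le\eps\|f_\eps\|_\infty+\dots$, where $\eps\|f_\eps\|_\infty\le K_f$. The choice $c=0$ gives the lower bound $u_\eps\ge -K_f-c_{13}t$. The upper bound follows from $-L\le c_{11}$, which gives $u_\eps\le K_f+c_{11}t$; both bounds are uniform in $\eps$ and $x$.

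The main obstacle is Step 2, specifically the continuity of $u$ and the uniform localization away from $\partial\mathcal C_\infty$ over all admissible controls. Theorem \ref{theorem 1} gives non-attainment for each control separately, but it gives no uniform estimate. I would obtain the needed uniformity from the entropy/Girsanov bound used in its proof, with $H(P^c|P^0)\lesssim E\int\|c\|_a^2$. Combined with non-attainment for $c=0$ and continuity of the $c=0$ process in its starting point, this controls exit probabilities of the controlled processes for controls whose $L$-cost is bounded.
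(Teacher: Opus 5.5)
\textbf{Step 3 has the two viscosity inequalities the wrong way round.} With a constant control $q$, the DPP only gives the one-sided bound
$u(t_0,x_0)\ge E\big[u(t_0-h,X^q_h)-\int_0^hL(X_s,q)\,\d s\big]$.
To replace $u$ by $\varphi$ inside the expectation you need $u\ge\varphi$ near $(t_0,x_0)$, i.e.\ a test function touching from \emph{below}. The outcome is $\partial_t\varphi\ge\sup_q[\,\cdots]=\mathcal H$, which is the \emph{super}solution inequality of Definition~\ref{def:SubSuperSol}. For $\varphi$ touching from above, constant controls give no information. It is the \emph{sub}solution inequality that needs near-optimal controls.

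Your Step~2 justification of linear growth also does not close. The bounds $L\ge -c_{11}$ and $|\mathbf b(y,c)|\le C(1+|c|)$ leave the term $E\int|a(X_s)c_s|\,\d s$ in the estimate of $E|X_T|$, and this is unbounded over $\mathbf C_T$. You have to absorb it into $-c_{10}E\int\|c_s\|_a^{\alpha'}\,\d s$ using $|ac|\le\|a\|_\infty^{1/2}\|c\|_a$. That is exactly the paper's Lemma~\ref{le:V.local.bdd}, and it is the key input of the paper's proof.

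\textbf{What you call the main obstacle is not needed, and your proposed way around it fails.}
\begin{itemize}
\item The entropy in \eqref{Girsanov} is of order $E\int\|c\|_a^2$, but the cost only controls $E\int\|c\|_a^{\alpha'}$ with $\alpha'=\alpha/(\alpha-1)$. In the regime $\alpha>2$ singled out in \eqref{alpha delta relation}, $\alpha'<2$. Controls of bounded $L$-cost can then have arbitrarily large entropy: take $|c|\approx\lambda$ on a time interval of length $\lambda^{-\beta}$ with $\alpha'<\beta<2$. So no uniform exit bound follows from your argument.
\item Continuity of $u$ is never required. Definition~\ref{def:SubSuperSol} is formulated with $u^*$ and $u_*$. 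The paper proves the linear-growth bound and then applies \cite[Theorem 7.4]{T13}, for which, besides Theorem~\ref{theorem 1}, only local boundedness of the value function has to be checked.
\end{itemize}

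If you want a self-contained argument, do the following. For the subsolution, replace $\varphi$ by $\varphi+|x-x_0|^4+|t-t_0|^2$, so that $u^*\le\varphi-\gamma$ on the parabolic boundary of a small neighbourhood $N\Subset(0,T)\times\mathcal C_\infty(\omega)$. Take $(t_n,x_n)\to(t_0,x_0)$ with $u(t_n,x_n)\to u^*(t_0,x_0)$, and apply the DPP in weak form (with $u^*$) at the exit time from $N$. Itô's formula is then used only inside $N$, where $\sigma$ and $\mathrm{div}\,a$ are Lipschitz, and no exit-probability estimate enters. The supersolution direction only needs lower semicontinuity of $(t,x)\mapsto J(t,x,c)$ for each \emph{fixed} $c$. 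That follows from stability of the SDE for that control together with Theorem~\ref{theorem 1}, with no uniformity over controls.

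Your factor check is warranted. With $a=\frac12\sigma\sigma$ and drift $\mathrm{div}\,a+aq$, the controlled generator is $\mathrm{div}(a\nabla\varphi)+\langle q,\nabla\varphi\rangle_a$. So the $\frac12$ in \eqref{eq HJB eps 1} is not what Itô's formula produces, and the paper's proof is loose on the same point.

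Your scaling step and the $\eps$-uniform bound in part~3 are correct. The part~3 bound is in fact slightly more direct than the paper's.
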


The proof of Theorem \ref{theorem 2} is given in Section \ref{sec proof thm2}.

\subsection{Verification result for continuum percolation}\label{sec perc models}

We now introduce two important continuum percolation models and state the verification result for the assumptions introduced above.

\subsubsection{The Boolean model.}\label{sec Boolean}

The simplest continuum percolation model, known as the Boolean model, is defined by
\[
\mathcal C(\omega):=\bigcup_{x\in\omega}B_{1/2}(x),
\]
where $\omega$ is sampled according to a probability measure $\P=\P^\zeta$ satisfying the following properties; recall the notation from Section \ref{sec-assume-models}:
\begin{itemize}
\item For any bounded $A\subset\R^d$ and any $n\in\N_0$,
\begin{equation}\label{Poisson}
\P^\zeta(\#(\omega\cap A)=n)
=
\frac{(\zeta |A|)^n}{n!}
\e^{-\zeta|A|}.
\end{equation}

\item For any collection of disjoint bounded Borel sets
$A_1,\ldots,A_k\subset\R^d$,
\begin{equation}\label{iidPPP}
\P^\zeta
\big(
\#(\omega\cap A_1)=n_1,\ldots,\#(\omega\cap A_k)=n_k
\big)
=
\prod_{i=1}^k
\P^\zeta(\#(\omega\cap A_i)=n_i).
\end{equation}
\end{itemize}

\subsubsection{Continuum random cluster model.}\label{sec cluster model}

Another important continuum percolation model is the continuum random cluster model (CRCM) \cite{DH15,H18}. Unlike the Boolean model, this model exhibits long-range correlations. For $\omega\in\Omega$ and $\Lambda\subset\R^d$, set
\[
\omega_\Lambda:=\omega\cap\Lambda.
\]
A probability measure $P$ on $(\Omega,\mathcal G)$ is a continuum random cluster model with parameters $q\geq1$ and $\zeta>0$, denoted by CRCM$(q,\zeta)$, if it is stationary and, for $P$-almost every configuration $\omega$ and each bounded set $\Lambda\subset\R^d$, the conditional law of $P$ given $\omega_{\Lambda^c}$ is absolutely continuous with respect to the Poisson point process restricted to $\Lambda$, denoted by $\P^\zeta_\Lambda$, with density
\[
\frac{
q^{N^\Lambda_{cc}(\cdot\cup\omega_{\Lambda^c})}
}{
Z_\Lambda(\omega_{\Lambda^c})
}.
\]
Here, $N^\Lambda_{cc}$ is the $\Lambda$-local number of connected components of a configuration \cite[Definition 2.1]{H18}, and $Z_\Lambda$ is the partition function
\[
Z_\Lambda(\omega_{\Lambda^c})
:=
\int_\Omega
q^{N^\Lambda_{cc}(\omega'_\Lambda\cup\omega_{\Lambda^c})}
\,\P^\zeta_\Lambda(\d\omega'_\Lambda).
\]
Equivalently, the DLR equations are satisfied: for every bounded measurable function $f$ and bounded set $\Lambda\subset\R^d$,
\[
\int_\Omega f(\omega)\,\d P
=
\int_\Omega\int_\Omega
f(\omega'_\Lambda\cup\omega_{\Lambda^c})
\frac{
q^{N^\Lambda_{cc}(\omega'_\Lambda\cup\omega_{\Lambda^c})}
}{
Z_\Lambda(\omega_{\Lambda^c})
}
\,\P^\zeta_\Lambda(\d\omega'_\Lambda)
\,P(\d\omega).
\]

\begin{theorem}\label{theorem 3}
Fix $d\geq2$. Then there exists $\zeta_c\in(0,\infty)$ such that for all
$\zeta>\zeta_c$, the Boolean model $(\Omega,\mathcal G,\P^\zeta)$ satisfies
\ref{assump:est-erg}-\ref{assump:dist} and, if $d\geq3$, also satisfies
\ref{assump:exp-dec-dist-indshift}. Moreover, the continuum random cluster model satisfies the assumptions \ref{assump:est-erg}-\ref{assump:inf-comp}.
\end{theorem}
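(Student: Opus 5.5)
The plan is to verify the assumptions one at a time for the Boolean model, citing the classical literature wherever possible. The only genuinely new estimate is \ref{assump:dist}. The continuum random cluster model part will be handled separately at the end.

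\textbf{Step 1: \ref{assump:est-erg}--\ref{assump:fkg} for the Boolean model.}

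For \ref{assump:est-erg}, the Poisson process $\P^\zeta$ is stationary by \eqref{Poisson}. It is mixing, via independence over disjoint sets \eqref{iidPPP}, under every nontrivial translation, in particular under $\tau_e$; mixing implies ergodicity.

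\ref{assump:intensity} is immediate, with $\Theta=\zeta\lambda$.

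For \ref{assump:inf-comp}, I invoke the existence of a critical intensity $\zeta_c\in(0,\infty)$ for Boolean percolation (Meester--Roy). Above $\zeta_c$ an unbounded component exists a.s.; it is unique by the Burton--Keane argument. Then $\P(\Omega_0)>0$ by stationarity.

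For \ref{assump:fkg}, I use the Harris--FKG inequality for Poisson processes, obtained by discretization and the Harris inequality for product measures, or directly from the literature.

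For \ref{assump:chem-dist}(a), I use the chemical-distance bounds for supercritical continuum percolation. The approach is a renormalization to a highly supercritical site percolation on a coarse lattice, in the spirit of Antal--Pisztora, adapted in the continuum by Yao--Chen--Guo and others. Under the two-point Palm measure one adds the points $x,y$, which only changes the configuration locally.

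For \ref{assump:exp-dec-dist-indshift}, the event $\{n(\omega,e)>k\}$ requires $je\notin\mathcal C_\infty$ for all $j\le k$. Using the same coarse graining, the probability that none of these $k$ well-separated sites lies in the infinite cluster decays exponentially. This is where I expect to need $d\ge3$, or at least where the available estimates are stated for $d\ge 3$: in $d=2$ one loses the clean ``good box'' control along a line needed for the rescaled argument. So I would only claim it for $d\geq3$, matching the statement.

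\textbf{Step 2: \ref{assump:dist}, the main obstacle.}

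Here $\partial\mathcal C_\infty$ is contained in the union of spheres $\partial B_{1/2}(y)$, $y\in\omega$. Write $r=\d(0,\partial\mathcal C_\infty)$. On $\Omega_0$ we have $r\le |y-0|$-type bounds, and $r<s$ forces some sphere $\partial B_{1/2}(y)$ to meet $B_s(0)$.

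For finiteness when $\chi<1$, I estimate $\P_0(r<s)\le \P(\Omega_0)^{-1}\,\P(\exists y\in\omega: ||y|-1/2|<s)$. By the Mehta-type bound $\E[\#\omega\cap\{||y|-\tfrac12|<s\}]=\zeta\lambda(\text{annulus})\lesssim s$, this gives $\P_0(r<s)\le Cs$. Then
\[
\E_0[r^{-\chi}]=\int_0^\infty \P_0(r^{-\chi}>t)\,\d t\le 1+C\int_1^\infty t^{-1/\chi}\,\d t<\infty
\]
for $\chi<1$; the large-$t$ tail needs $r$ bounded above, which holds since $r\le$ the distance to the complement of one ball, so $r\le 1$.

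For divergence when $\chi\ge1$, I need the matching lower bound $\P_0(r<s)\ge cs$. I would construct the event as follows: a point $y$ with $|y|\in(1/2-s,1/2)$, all other points outside a fixed neighbourhood such that $\partial B_{1/2}(y)$ near $0$ is not covered by other balls, and the ball $B_{1/2}(y)$ connected to the infinite cluster. The connection requirement is increasing, while the ``no other points nearby'' requirement is decreasing, so FKG does not combine them directly. Instead I would use independence: require emptiness in a small region $U$ near the point of $\partial B_{1/2}(y)$ closest to $0$, and let connection to infinity occur via a chain of points placed outside $U$. The Poisson independence over disjoint sets, together with $\P(\text{connection outside }U)>0$ in the supercritical regime (by the finite-energy/insertion property), gives $\P_0(r<s)\ge cs$. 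Hence $\E_0[r^{-1}]\ge \int_0^1 c\,s^{-1}\,\d s=\infty$. The delicate point is ensuring that $0\in\mathcal C_\infty$ while the boundary stays within distance $s$; this is handled by having $0\in B_{1/2}(y)$ only.

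\textbf{Step 3: the continuum random cluster model.}

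Stationarity is part of the definition. Ergodicity, including with respect to $\tau_e$, I would take from the extremal/free-boundary CRCM constructions in \cite{DH15,H18}, where mixing of the appropriate Gibbs measures is shown. \ref{assump:intensity} follows from stochastic domination by $\P^{q\zeta}$, since $q\geq1$ and the density is bounded by $q^{(\cdot)}$ times a local count. Percolation, i.e.\ \ref{assump:inf-comp}, for large $\zeta$ follows from stochastic domination from below by a Poisson process of intensity $\zeta/q$ (results of \cite{H18}), combined with Step 1. Uniqueness of the infinite component follows from a Burton--Keane argument using finite energy, which the DLR density provides.
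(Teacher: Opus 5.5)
Your treatment of (P1)--(P3), (P4)(a), (P5) and (P6) is essentially the paper's. For the first group you rely on the literature, handling the Palm measure by adding points. For (P6) you use the same shell estimate $\P(\exists y\in\omega:\,\bigl||y|-\tfrac12\bigr|<s)\lesssim s$ for the upper bound. For the lower bound you use the same construction: one point in a thin shell, a region emptied around it, and a connection to infinity built from independent pieces.

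The genuine differences are in (P4)(b) and in the CRCM part.

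\textbf{(P4)(b).} The paper does not coarse-grain. By Tanemura \cite{T93}, for $d\ge3$ and $L$ large the slab $\R^{d-1}\times[0,L]$ perpendicular to $e$ percolates on its own, and by uniqueness its infinite cluster lies in $\mathcal C_\infty$. The translates by $kLe$ are disjoint slabs. So the events that none of the lattice points $je$ in the $k$-th slab belongs to the slab cluster are independent, each with probability $1-p_L<1$, and geometric decay of $|\mathfrak v_e|$ follows at once. This is also where $d\ge3$ really enters: strips do not percolate in $d=2$. Your stated reason is not the actual one, and a coarse-graining argument at large $\zeta$ would not obviously need $d\ge3$ at all.

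Your route can work, but its key step is missing. A good box along the line does not by itself place any $je$ in $\mathcal C_\infty$. Before Bernoulli domination yields exponential decay, you must do one of two things:
\begin{itemize}
\item build membership of some $je$ into the box event, e.g.\ require that the crossing cluster of the box covers a lattice point of the line, which has probability close to one for large $\zeta$;
\item use local modification at sites spaced by the renormalization scale, together with an actual conditional-independence argument.
\end{itemize}

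\textbf{CRCM.} You replace the paper's citations by Papangelou-intensity domination plus Burton--Keane. The paper cites \cite[Theorem 1]{H18} for $N_\infty\le1$ and percolation at large activity, and \cite{DH15} for an ergodic stationary CRCM. Your version is more self-contained. However, mixing under every $\tau_e$ is more than the paper takes from those references, so that claim needs its own justification.

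\textbf{Two corrections in your Step 2.}
\begin{itemize}
\item $r$ is not bounded by $1$. If $0\in B_{1/2}(y)\subset\mathcal C_\infty$, then $r\ge\dist(0,\R^d\setminus B_{1/2}(y))$, the reverse of what you wrote, and $\mathcal C_\infty$ may cover a large ball around $0$. This is harmless: $\int_0^1\P_0(r^{-\chi}>t)\,\d t\le1$ trivially, and large $r$ only helps.
\item The region to be emptied in the lower bound is not small. Keeping the closest point $y_j$ of $\partial B_{1/2}(y)$ uncovered requires the whole ball $B_{1/2}(y_j)$, of radius $\tfrac12$ and containing $0$, to be free of other points. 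The paper empties $B_{2/3}(0)$ apart from the thin shell sector $S_\eps$. It also restricts the direction of $y$ to a small cap, so that one fixed chain $U_1,\dots,U_N$ outside $B_{2/3}(0)$ meets $B_{1/2}(y)$ for every admissible $y$. Your sketch needs the same device, since otherwise the connecting chain, and hence the independence structure, depends on $y$.
\end{itemize}
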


The proof of Theorem \ref{theorem 3} is given in Section \ref{sec proof thm3}. While we do not pursue it here, we expect that the continuum random cluster model can also be shown to satisfy the remaining assumptions \ref{assump:chem-dist}-\ref{assump:dist}; see Section \ref{subsec sketch P4} for an outline of the verification of \ref{assump:chem-dist} for this model.

In light of Remark \ref{remark boundary}, we record the following immediate consequence of Theorem \ref{theorem 3}: 

\begin{cor}\label{cor thm3}
Assume \ref{assump:dist}. Then 
\[ 
\E_0[d(0,\partial\mathcal C_\infty)^{-\chi}]<\infty \qquad \text{for every }  \chi \in (0,1).  
\] 
Consequently, if $\xi$ satisfies 
\[ 
\xi(\omega)  {}\gtrsim{}  d(0,\partial\mathcal C_\infty(\omega)), 
\] 
then 
\[ \E_0[\xi^{-\chi}]<\infty \qquad \text{for every }  \chi \in (0,1).  
\] 
This integrability regime is precisely the one required in the homogenization theory developed in the companion paper \cite[Assumption (A2)]{BMM26}. In particular, Theorem \ref{theorem 3} implies that this condition is satisfied for the supercritical Boolean continuum percolation model.
\end{cor}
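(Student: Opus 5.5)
The statement is essentially a direct consequence of \ref{assump:dist} together with a monotonicity comparison, so the plan is short. First, for fixed $\chi\in(0,1)$, I will read off from the ``if'' direction of \ref{assump:dist} that $\E_0[\mathrm d(0,\partial\mathcal C_\infty)^{-\chi}]<\infty$. Here $\mathrm d(0,\partial\mathcal C_\infty)$ is $\P_0$-a.s. strictly positive, since on $\Omega_0$ the origin lies in the open set $\mathcal C_\infty(\omega)$; so the negative power is well defined and finite almost surely. This proves the first display.

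Second, I will handle the consequence for $\xi$. The hypothesis $\xi\gtrsim \mathrm d(0,\partial\mathcal C_\infty)$ means there is a deterministic constant $\kappa>0$ with $\xi(\omega)\ge\kappa\,\mathrm d(0,\partial\mathcal C_\infty(\omega))$ for $\P_0$-a.e. $\omega$. Since $s\mapsto s^{-\chi}$ is decreasing on $(0,\infty)$, this gives
\[
\xi^{-\chi}\le \kappa^{-\chi}\,\mathrm d(0,\partial\mathcal C_\infty)^{-\chi}\qquad \P_0\text{-a.s.}
\]
Taking $\E_0$ and applying the first step yields $\E_0[\xi^{-\chi}]<\infty$ for every $\chi\in(0,1)$.

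Third, I will connect this to the companion paper and to Theorem~\ref{theorem 3}. By Remark~\ref{remark boundary}, the exponent $\chi(\alpha,\delta)$ required in \cite{BMM26} lies in $(0,1)$, so it falls within the range just covered. Theorem~\ref{theorem 3} shows that the supercritical Boolean model satisfies \ref{assump:dist}, so the conclusion holds there.

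There is no genuine obstacle; the only point to watch is the direction of the inequality. Remark~\ref{remark boundary} shows that the Lipschitz bound gives $\xi\lesssim \mathrm d(0,\partial\mathcal C_\infty)$. The corollary instead needs the reverse bound $\xi\gtrsim \mathrm d(0,\partial\mathcal C_\infty)$, which is why it is stated as an additional hypothesis. That reverse bound is to be supplied by a concrete choice of $a$ comparable to the (regularized) distance function, as in Section~\ref{sec exmaple a}.
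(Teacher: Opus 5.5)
Your proposal is correct and follows the paper's route: the paper records this as an immediate consequence of \ref{assump:dist}, the monotonicity of $s\mapsto s^{-\chi}$ under $\xi\gtrsim \mathrm d(0,\partial\mathcal C_\infty)$, and Theorem~\ref{theorem 3} for the Boolean model. One small aside on your last paragraph: the explicit $\xi=\theta(\rho)$ from Corollary~\ref{cor example a} is capped at $1$, so it only gives $\xi\gtrsim\min\{\mathrm d(0,\partial\mathcal C_\infty),1\}$; this still suffices, because then $\xi^{-\chi}\lesssim 1+\mathrm d(0,\partial\mathcal C_\infty)^{-\chi}$.
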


\subsection{Related literature and ideas of proof}
The study of diffusions and homogenization in random environments has a long history, particularly in uniformly elliptic settings or on the full space $\R^d$; see, for instance,
\cite{PV81,PV82,K85,KV86,LPV87,JKO94}. More recently, stochastic homogenization for Hamilton--Jacobi equations and viscous Hamilton--Jacobi equations in stationary ergodic environments has been developed in a variety of settings, including degenerate regimes; see among many others
\cite{I99,RT00,So99,LS05,LS10,KRV06,KV08,K08}. Random walks on percolation structures have also been studied extensively, both in the discrete setting 
\cite{B04,BB07,MP07,SS04,B11,PRS15,D19} (see also \cite{ADS15,A25} for studies on degenerate environments) and in continuum random media; see for instance
\cite{S94,OS95,CD16,T24}. Geometric and probabilistic properties of continuum percolation itself are by now classical; see in particular \cite{MR96}.

The present work differs from these frameworks because the underlying state space is a genuine continuum percolation cluster and the diffusion coefficient is allowed to degenerate near the random boundary. In particular, the geometry of the domain is itself random, highly irregular, and conditioned on the existence of an infinite connected component. This leads to several difficulties due to the interaction between the degeneracy of the diffusion and the geometry of the cluster near the boundary.

A central difficulty throughout the paper is that the random boundary
$\partial\mathcal C_\infty(\omega)$
lacks any smooth geometric structure. In particular, classical approaches based on smooth boundary analysis, reflection arguments, or It{\^o}-type localization near the boundary are not naturally available in the present setting. While one could in principle study reflected or obliquely reflected diffusions on such random domains, this would require additional control of boundary local times and boundary interactions in a geometry with very limited regularity. The approach developed here instead avoids introducing boundary dynamics altogether.

That said, this does not remove the influence of the boundary geometry from the problem. Indeed, the regularity assumptions imposed on the diffusion coefficient must remain compatible with quantitative integrability properties of the degeneracy, which are required for large-scale analysis in random media. In continuum percolation these conditions are constrained by the behavior of the distance-to-the-boundary function under the conditioned measure $\P_0$, whose inverse moments are themselves limited by the geometry of the infinite cluster. Thus, although the diffusion avoids direct boundary interaction, the irregular boundary geometry still enters fundamentally through the admissible degeneracy regime.
One of the main contributions of the paper is to identify a regime in which these competing requirements remain simultaneously compatible.

The proof of Theorem \ref{theorem 1} exploits the degeneracy of the diffusion coefficient near the boundary through a finite-energy logarithmic barrier argument. Rather than controlling the diffusion through geometric regularity of the boundary, the idea is that the degeneracy itself prevents the process from reaching $\partial\mathcal C_\infty(\omega)$. Concretely, a noteworthy feature here is the appearance of logarithmic barriers
of the form 
$$
\bigg(\log \frac 1 {d(x,\partial\mathcal C_\infty(\omega))}\bigg)^\varepsilon \qquad \mbox{with}\qquad \varepsilon<1/2.
$$
The threshold
$\varepsilon=1/2$ marks a critical regime in which the boundary degeneracy and
the singular behavior of the barrier exactly balance. This borderline
phenomenon plays a central role in controlling the diffusion near the random
boundary of the continuum percolation cluster. The arguments are developed in Section \ref{subsec barrier}- \ref{subsec entropy}. Section \ref{sec exmaple a} provides an explicit construction of the diffusion matrix $a$ satisfying \ref{f1} and Remark \ref{remark:sharpness} shows that the Lipschitz continuity in \ref{f1}(b) is quite natural to for non-attainment of boundary. 

Theorem \ref{theorem 1} yields a global quenched diffusion process without imposing regularity assumptions on the random boundary beyond the geometric assumptions encoded in the framework. This is also the key input in the proof of Theorem \ref{theorem 2}: since trajectories remain in the interior of the cluster for all times almost surely, the stochastic control problem and the associated dynamic programming principle can be formulated without imposing boundary conditions on the random domain. This yields a quenched stochastic representation formula for viscosity solutions of degenerate Hamilton--Jacobi--Bellman equations on continuum percolation clusters. The proof of Theorem \ref{theorem 2} is given in Section \ref{sec proof thm2} by leveraging the assumptions \ref{f2}-\ref{f4} combined with \ref{f1}.

Finally, Theorem~\ref{theorem 3} shows that the assumptions introduced in the
paper are simultaneously realizable in concrete continuum percolation models.
In particular, for the Boolean model we obtain quantitative control on the tail distribution of the
distance-to-the-boundary function under the conditioned measure $\P_0$. Concretely, combining local Poisson geometry with global connectivity properties of the
supercritical phase, it is shown that 
for all $\eps>0$ sufficiently small,
\[
c\,\eps
\le
\P_0\big(\dist\bigl(0,\partial\mathcal C_\infty(\omega)\bigr) <\eps\big )
\le
C\,\eps,
\]
see Section \ref{subsec proof P6}. This implies the inverse distance-to-the-boundary function being
integrable under $\P_0$ up to exponents strictly smaller than one, and in
general not beyond this threshold. This result is of particular conceptual
importance for the homogenization theory developed in the companion
paper~\cite{BMM26}: Indeed, as noted in Remark \ref{remark boundary}, the geometry of the continuum percolation cluster imposes the intrinsic restriction $\chi<1$, while the admissible moment
condition on the degeneracy is determined jointly by this geometric constraint
and the coercivity assumptions \ref{f2} on the Hamiltonian; we refer to
\cite[Remark~1]{BMM26} for details on this. In this sense, the geometry of the
random cluster directly influences the range of admissible degeneracies and
thus the structure of the effective theory.

Verifying the structural assumptions for the continuum random cluster model
further indicates that the framework is not restricted to finite-range or
i.i.d.\ environments, but naturally extends to models with long-range
correlations. The details can be found in
Section~\ref{sec proof thm3}.

To the best of our knowledge, quenched stochastic control representations and Hamilton--Jacobi--Bellman equations for degenerate controlled diffusions on genuine continuum percolation clusters have not previously been studied in the literature. The results obtained here also provide the foundational probabilistic and analytic framework for the homogenization theory developed in the companion paper \cite{BMM26}.

\section{Proof of Theorem \ref{theorem 1}}\label{sec proof thm1}

The goal of this section is to prove Theorem \ref{theorem 1}, which states that for every fixed $\omega\in\Omega_0$, the diffusion $X_t$ stays inside the cluster $\mathcal C_\infty(\omega)$ for all times. We recall from Section \ref{sec-main-results} that $(\mathscr{X},\mathcal{F},\{\mathcal F_t\}_{t\geq 0},P)$ is a filtered probability space carrying an auxiliary $d$-dimensional Brownian motion $(B_t)_{t\geq 0}$ adapted to $(\mathcal F_t)_{t\geq 0}$. The law $P$ of this Brownian motion is independent of the law $\P$ of the point process introduced in Section \ref{sec-assume-models}. From \eqref{def-class-C}, we have the class
\[
\mathbf C_T
=
\bigg\{
c:[0,T]\times \mathscr X \to \R^d
\colon
c \mbox{ is progressively measurable and }
E^P\bigg(\int_0^T |c(s)|^2\,\d s\bigg)<\infty
\bigg\}.
\]

For any diffusion coefficient $a:\R^d\times\Omega_0\to\mathcal S_d$ satisfying Assumption \ref{f1} and any $c\in\mathbf C_T$, we consider the SDE
\begin{equation}\label{controlled-sde}
X_t
=
x
+
\int_0^t \sigma(X_s)\,\d B_s
+
\int_0^t(\mathrm{div}\,a)(X_s)\,\d s
+
\int_0^t a(X_s)c(s)\,\d s,
\qquad
\mbox{a.s. for all } t\geq0.
\end{equation}

\subsection{Barrier estimates}\label{subsec barrier}

The proof of Theorem \ref{theorem 1} is split into several steps contained in Lemmas \ref{lemma Dirichlet}-  \ref{cor Dirichlet 4}. The following lemma is the first key step.

\begin{lemma}\label{lemma Dirichlet}
Let $(B,\Sigma,\mu)$ be a probability space and let $L$ be the generator of a symmetric Markov process $(X_t)_{t\geq0}$ taking values in $B$, with invariant probability measure $\mu$. Let $P_x$ denote the law of $(X_t)_{t\geq0}$ starting from $x\in B$ and let
\[
P_\mu(\cdot):=\int_B P_x(\cdot)\,\mu(\d x).
\]
For a function $f$ in the domain of $L$, define the Dirichlet energy by
\[
\mathcal E(f,f):=-\int_B (Lf)(x)f(x)\,\mu(\d x).
\]
Then, for every $T>0$ and $\ell>0$,
\begin{equation}\label{est lemma Dirichlet}
P_\mu\bigg[\sup_{0\leq t\leq T}|f(X_t)|\geq \ell\bigg]
\leq
\frac{\e}{\ell}
\sqrt{\|f\|_{L^2(\mu)}^2+T\,\mathcal E(f,f)}.
\end{equation}

\end{lemma}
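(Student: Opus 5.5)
This is a maximal inequality for symmetric Markov processes in the spirit of the Lyons--Zheng forward--backward decomposition. The factor $\e$ suggests an exponential-time (resolvent) argument instead. I will follow the classical route through the $1$-capacity of the level set $\{|f|\ge \ell\}$, with a time-scaling trick producing $T\mathcal E$.

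\textbf{Step 1: reduction to a hitting time.} Since $|f|$ has energy at most $\mathcal E(f,f)$ and the same $L^2$ norm, we may assume $f\ge0$; this uses the contraction property of the Dirichlet form. Set $O=\{f>\ell'\}$ for $\ell'<\ell$, and let $\sigma_O$ be its hitting time. The event in \eqref{est lemma Dirichlet} is contained in $\{\sigma_O\le T\}$ up to the limit $\ell'\uparrow\ell$.

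\textbf{Step 2: exponential weight.} For $\lambda>0$, Markov's inequality gives
\[
P_\mu[\sigma_O\le T]\le \e^{\lambda T}E_\mu[\e^{-\lambda\sigma_O}].
\]
Let $e_O(x)=E_x[\e^{-\lambda\sigma_O}]$ be the $\lambda$-equilibrium potential. Then $e_O=1$ on $O$ (quasi-everywhere), and $e_O$ minimizes $\mathcal E_\lambda(g,g)=\mathcal E(g,g)+\lambda\|g\|_{L^2(\mu)}^2$ among all $g\ge1$ on $O$. Moreover, by symmetry of $X$ under $\mu$ and the identity $(\lambda-L)e_O=\nu_O$ for the equilibrium measure $\nu_O$,
\[
\int e_O\,\d\mu=\frac1\lambda\,\mathcal E_\lambda(e_O,1)\le\frac1\lambda\,\nu_O(B)=\frac1\lambda\,\mathrm{Cap}_\lambda(O).
\]
Alternatively, and more robustly, use Cauchy--Schwarz with $\mu(B)=1$:
\[
E_\mu[\e^{-\lambda\sigma_O}]=\int e_O\,\d\mu\le \|e_O\|_{L^2(\mu)}\le \lambda^{-1/2}\,\mathcal E_\lambda(e_O,e_O)^{1/2}=\lambda^{-1/2}\,\mathrm{Cap}_\lambda(O)^{1/2}.
\]

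\textbf{Step 3: capacity bound and choice of $\lambda$.} The function $g=f/\ell'$ is admissible, so
\[
\mathrm{Cap}_\lambda(O)\le \ell'^{-2}\big(\mathcal E(f,f)+\lambda\|f\|_{L^2(\mu)}^2\big).
\]
Combining the steps yields
\[
P_\mu[\sigma_O\le T]\le \frac{\e^{\lambda T}}{\ell'}\sqrt{\|f\|^2_{L^2(\mu)}+\lambda^{-1}\mathcal E(f,f)}.
\]
Choosing $\lambda=1/T$ gives exactly $\frac{\e}{\ell'}\sqrt{\|f\|^2_{L^2(\mu)}+T\mathcal E(f,f)}$. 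Letting $\ell'\uparrow\ell$ completes the argument.

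\textbf{Main obstacle.} The delicate step is the potential-theoretic part of Step 2. One must justify that $e_O$ is the $\mathcal E_\lambda$-minimizer, i.e.\ the identification of $\lambda$-capacity with the hitting probability $E_x[\e^{-\lambda\sigma_O}]$. This requires the Markov process to be associated with a regular (quasi-regular) Dirichlet form and $O$ to be quasi-open. In the abstract setting of the lemma I would invoke Fukushima--Oshima--Takeda (Theorem 2.1.5 and Lemma 2.3.x there), and I would interpret the domain of $L$ and $\mathcal E$ via the Dirichlet form closure.

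A fallback, if that machinery is to be avoided, is the Lyons--Zheng decomposition. Write $f(X_t)-f(X_0)=\tfrac12(M_t-\hat M_{T-t}+\hat M_T)$ under $P_\mu$, using reversibility, and apply Doob's $L^2$ inequality to each martingale, using $E_\mu\langle M\rangle_T=2T\mathcal E(f,f)$. This also gives a bound of the form $C\ell^{-1}\sqrt{\|f\|^2_{L^2(\mu)}+T\mathcal E(f,f)}$, but with a constant other than $\e$, so the capacity route is preferred for the stated constant.
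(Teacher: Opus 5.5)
Your proposal is correct and is essentially the paper's argument: Markov's inequality applied to $\e^{-\lambda\sigma_O}$, the identification of $x\mapsto E_x[\e^{-\lambda\sigma_O}]$ as the $\lambda$-equilibrium potential minimizing $\mathcal E_\lambda$, Cauchy--Schwarz against the probability measure $\mu$, and comparison with the admissible test function $f/\ell$. Your choice $\lambda=1/T$ is exactly the paper's rescaling $L\to TL$ with $\lambda=1$. Your use of the open superlevel set $\{|f|>\ell'\}$, with the constraint $g\ge1$ rather than $g=1$ on it, is a slightly cleaner form of the paper's comparison step. That step tacitly relies on the same contraction/truncation property of the Dirichlet form that you invoke explicitly.
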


\begin{proof}
By replacing $f$ with $f/\ell$ and the generator $L$ with $TL$, it suffices to prove the claim for $\ell=T=1$, namely
\begin{equation}\label{lT1}
P_\mu\bigg[\sup_{0\leq t\leq 1}|f(X_t)|\geq 1\bigg]
\leq
\e\sqrt{\|f\|_{L^2(\mu)}^2+\mathcal E(f,f)}.
\end{equation}

Let
\[
A:=\{x\in B  : |f(x)|\geq 1\} \qquad\mbox{and}\qquad \tau:=\inf\{t>0:X_t\in A\}.
\]
For $\lambda>0$, the function
\[
U_\lambda(x):=E_x[\e^{-\lambda\tau}]
\]
is the equilibrium potential of $A$ for the resolvent parameter $\lambda$. In particular, for $\lambda=1$, the function
\[
U(x):=U_1(x)=E_x[\e^{-\tau}]
\]
satisfies $U=1$ on $A$, $0\leq U\leq1$, and minimizes the variational problem
\[
\inf_{g=1\ \mathrm{on}\ A}
\big[
\|g\|_{L^2(\mu)}^2+\mathcal E(g,g)
\big].
\]
Since $|f|\geq1$ on $A$, 
\begin{equation}\label{bound Uf}
\|U\|_{L^2(\mu)}^2+\mathcal E(U,U)
\leq
\|f\|_{L^2(\mu)}^2+\mathcal E(f,f).
\end{equation}
Moreover,
\[
P_x[\tau<1]
=
P_x[\e^{-\tau}>\e^{-1}]
\leq
\e\,E_x[\e^{-\tau}]
=
\e\,U(x).
\]
Integrating with respect to $\mu$ and using \eqref{bound Uf}, we obtain
\[
\begin{aligned}
P_\mu\bigg[\sup_{0\leq t\leq1}|f(X_t)|\geq1\bigg]
&=
P_\mu[\tau<1]
=
\int_E P_x(\tau<1)\,\mu(\d x)
\leq
\e\int_E U(x)\,\mu(\d x)
\\
&\leq
\e\,\|U\|_{L^2(\mu)}
\leq
\e\sqrt{\|U\|_{L^2(\mu)}^2+\mathcal E(U,U)}
\leq
\e\sqrt{\|f\|_{L^2(\mu)}^2+\mathcal E(f,f)}.
\end{aligned}
\]
This proves \eqref{lT1} and hence the lemma.
\end{proof}

\begin{lemma}\label{cor Dirichlet 1}
Let $G\subset\R^d$ be open and bounded, and let $(X_t)_{t\geq0}$ be a symmetric Markov process as in Lemma \ref{lemma Dirichlet}, with invariant measure $\mu$ equal to the normalized Lebesgue measure on $G$. Assume that the sample paths are continuous. Suppose there exists a function $f:G\to\R$ such that
\begin{enumerate}
\item $|f(x)|\to\infty$ as $x\to\partial G$,\label{1}
\item $\|f\|_{L^2(\mu)}^2+\mathcal E(f,f)<\infty$.\label{2}
\end{enumerate}
Then, for $\mu$-almost every starting point $x\in G$, we have 
\begin{equation}\label{claim est cor1}
P_x(\tau_{\partial G}<\infty)=0,
\qquad
\mbox{where}
\qquad
\tau_{\partial G}:=\inf\{t\geq0:X_t\in\partial G\}.
\end{equation}
\end{lemma}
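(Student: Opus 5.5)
The plan is to combine Lemma~\ref{lemma Dirichlet} with a level-set argument and continuity of paths. Fix $T>0$ and $\ell>0$. Because $|f(x)|\to\infty$ as $x\to\partial G$, the set $A_\ell:=\{x\in G:|f(x)|\ge\ell\}$ contains, for each $\ell$, a full neighbourhood of $\partial G$ intersected with $G$. More precisely, $K_\ell:=\{x\in G:|f(x)|<\ell\}$ has closure in $\R^d$ disjoint from $\partial G$, so $\overline{K_\ell}$ is a compact subset of $G$; here we use that $G$ is bounded. A continuous path starting in $G$ that reaches $\partial G$ before time $T$ must therefore leave $\overline{K_\ell}$ beforehand, i.e.\ pass through $A_\ell$. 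This gives the inclusion
\[
\{\tau_{\partial G}\le T\}\subset\Big\{\sup_{0\le t<\tau_{\partial G}\wedge T}|f(X_t)|\ge \ell\Big\}\subset\Big\{\sup_{0\le t\le T}|f(X_t)|\ge\ell\Big\},
\]
where $f(X_t)$ is interpreted for $t<\tau_{\partial G}$. If necessary, one may set $|f|=\infty$ on $\partial G$ to make the last event meaningful.

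Applying Lemma~\ref{lemma Dirichlet} with $\mu$ the normalized Lebesgue measure on $G$ then yields
\[
P_\mu[\tau_{\partial G}\le T]\le \frac{\e}{\ell}\sqrt{\|f\|_{L^2(\mu)}^2+T\,\mathcal E(f,f)}
\]
for every $\ell>0$. The right-hand side is finite by assumption~\ref{2}, so letting $\ell\to\infty$ gives $P_\mu[\tau_{\partial G}\le T]=0$. Taking $T=n\in\N$ and a countable union gives $P_\mu[\tau_{\partial G}<\infty]=0$. Since $P_\mu=\int_G P_x\,\mu(\d x)$ and $x\mapsto P_x(\tau_{\partial G}<\infty)$ is nonnegative and measurable, we get $P_x(\tau_{\partial G}<\infty)=0$ for $\mu$-a.e.\ $x\in G$, which is \eqref{claim est cor1}.

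The main obstacle I expect is technical: Lemma~\ref{lemma Dirichlet} is stated for $f$ in the domain of $L$, whereas here $f$ is unbounded near $\partial G$ and is only assumed to have finite energy. I would handle this by truncation. Put $f_\ell:=(|f|\wedge \ell)$, or a smooth version of it. Normal contractions do not increase the Dirichlet form, so $\mathcal E(f_\ell,f_\ell)\le\mathcal E(f,f)$ and $\|f_\ell\|_{L^2}\le\|f\|_{L^2}$. Also $\{|f_\ell|\ge\ell\}=A_\ell$. One then applies the lemma in its form-domain version: the proof only used the variational characterization of the equilibrium potential, which holds for any $g$ in the form domain with $g\ge1$ on $A$. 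A second point to check is that the hitting time of $A_\ell$ by the process coincides with the event $\{\sup_{t\le T}|f(X_t)|\ge\ell\}$ up to null sets, which requires some regularity of $f$. I would assume $f$ is continuous on $G$, as it will be for the logarithmic barriers used later; otherwise one replaces $A_\ell$ by its quasi-closed version via standard Dirichlet-form capacity arguments.
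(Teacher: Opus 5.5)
Your proposal is correct and follows essentially the same route as the paper. Both arguments use path continuity together with $|f|\to\infty$ at $\partial G$ to place $\{\tau_{\partial G}\le T\}$ inside the large-supremum event for $|f(X_t)|$ (the paper writes it as $\{\sup_{t\le T}|f(X_t)|=+\infty\}$, you write it as $\{\sup_{t\le T}|f(X_t)|\ge\ell\}$ for every $\ell$), then apply Lemma \ref{lemma Dirichlet} and let $\ell\to\infty$, take a countable union over $T$, and disintegrate $P_\mu$. Your truncation $|f|\wedge\ell$ and your remarks on the form-domain version of the lemma and on the regularity of $f$ are sound, and they address technical points the paper passes over without comment.
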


\begin{proof}
Fix $T>0$. From Lemma \ref{lemma Dirichlet}, since
$\|f\|_{L^2(\mu)}^2+\mathcal E(f,f)<\infty$,
the right-hand side of the estimate \eqref{est lemma Dirichlet} tends to $0$ as $\ell\to\infty$. Therefore
\[
P_\mu\bigg[
\sup_{0\leq t\leq T}|f(X_t)|=+\infty
\bigg]=0.
\]
Because $|f(x)|\to\infty$ as $x\to\partial G$, for every $M>0$ there exists $\delta>0$ such that
\[
\mathrm{dist}(x,\partial G)<\delta
\quad\Longrightarrow\quad
|f(x)|>M.
\]
On the event $\{\tau_{\partial G}\leq T\}$, continuity of the sample paths implies that there exists a sequence $t_n\uparrow\tau_{\partial G}$ such that $X_{t_n}\to\partial G$. Hence $|f(X_{t_n})|\to\infty$, and therefore $\sup_{0\leq t\leq T}|f(X_t)|=+\infty$.
Thus
\[
\{\tau_{\partial G}\leq T\}
\subset
\bigg\{
\sup_{0\leq t\leq T}|f(X_t)|=+\infty
\bigg\}.
\]
Consequently,$P_\mu(\tau_{\partial G}\leq T)=0$.
Since $T>0$ is arbitrary,
\[
P_\mu(\tau_{\partial G}<\infty)
=
P_\mu\bigg(\bigcup_{n=1}^\infty\{\tau_{\partial G}\leq n\}\bigg)
\leq
\sum_{n=1}^\infty P_\mu(\tau_{\partial G}\leq n)
=0.
\]
Finally, since $P_\mu(\cdot)=\int_G P_x(\cdot)\,\mu(\d x)$,
we obtain
\[
0
=
P_\mu(\tau_{\partial G}<\infty)
=
\int_G P_x(\tau_{\partial G}<\infty)\,\mu(\d x).
\]
Hence $P_x(\tau_{\partial G}<\infty)=0$ for $\mu$-almost every $x\in G$.
\end{proof}

\begin{lemma}\label{cor Dirichlet 2}
Let $G\subset\R^d$ be open and bounded, and let
\[
L=\frac12\operatorname{div}(a\nabla)
\]
be the generator of a symmetric Markov process $(X_t)_{t\geq0}$ taking values in $G$, where $x\mapsto a(x)$ is a symmetric positive semidefinite matrix which is continuous and bounded, vanishes on $\partial G$, and is locally elliptic in the sense that
\[
\langle a(x)v,v\rangle\geq \xi(x)|v|^2,
\qquad
\inf_{x\in K}\xi(x)\geq\delta_K>0
\]
for every compact subset $K\subset G$. Suppose there exists a function $f:G\to\R$ satisfying \ref{1}--\ref{2} in Lemma \ref{cor Dirichlet 1}. Then, for every starting point $x\in G$,
\[
P_x(\tau_{\partial G}<\infty)=0.
\]
\end{lemma}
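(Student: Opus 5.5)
The plan is to upgrade the ``$\mu$-almost every'' conclusion of Lemma \ref{cor Dirichlet 1} to ``every'' $x\in G$. Local ellipticity makes the transition kernel absolutely continuous away from $\partial G$, and the Markov property then transports the almost-everywhere statement to every starting point.

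First, Lemma \ref{cor Dirichlet 1} applies directly, because $L=\frac12\operatorname{div}(a\nabla)$ is symmetric with respect to Lebesgue measure on $G$; normalised, this is the invariant measure $\mu$. The paths of the associated diffusion are continuous. Hence the exceptional set
\[
N:=\{x\in G: P_x(\tau_{\partial G}<\infty)>0\}
\]
satisfies $\mu(N)=0$.

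Now fix an arbitrary $x\in G$. Choose a compact neighbourhood $K=\overline{B_r(x)}\subset G$, and let $\tau_K$ be the exit time from $B_r(x)$. On $K$ the operator is uniformly elliptic, since $\langle a v,v\rangle\ge\delta_K|v|^2$, and it has bounded continuous coefficients. The process killed at $\tau_K$ therefore coincides with the uniformly elliptic divergence-form diffusion killed on leaving $B_r(x)$. Its killed transition kernel $p^K_s(x,\cdot)$ has a density with respect to Lebesgue measure. This follows from Aronson-type Gaussian bounds or the Nash--De Giorgi theory for divergence-form operators, applied with merely bounded measurable coefficients on the ball.

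Also, by continuity of paths and $x\in B_r(x)$, we have $P_x(\tau_K>0)=1$. Hence $P_x(\tau_K>s)\to1$ as $s\downarrow0$.

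Next I split according to whether the ball has been exited by time $s$. Path continuity gives $\tau_{\partial G}>\tau_K$, because the path must leave $B_r(x)$ before it can touch $\partial G$. Applying the Markov property at time $s$ on the event $\{\tau_K>s\}$ yields
\[
P_x(\tau_{\partial G}<\infty)\le P_x(\tau_K\le s)+E_x\big[\1_{\{\tau_K>s\}}P_{X_s}(\tau_{\partial G}<\infty)\big].
\]
The second term equals $\int_{B_r(x)} p^K_s(x,y)\,P_y(\tau_{\partial G}<\infty)\,\d y$. This vanishes because $p^K_s(x,\cdot)$ is absolutely continuous and the integrand vanishes outside the Lebesgue-null set $N$. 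Letting $s\downarrow0$ gives $P_x(\tau_{\partial G}<\infty)=0$.

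The main obstacle is the absolute continuity step. One must identify the killed process, which is defined via the Dirichlet form on all of $G$, with the uniformly elliptic diffusion on the ball, and ensure the process is started from every point rather than quasi-every point. My first approach is to use the Dirichlet-form theory for the part process on $B_r(x)$, where uniform ellipticity gives a jointly continuous heat kernel. This makes the semigroup strong Feller there, so the process can be defined for every starting point. Should this identification be delicate, I would instead invoke the strong Feller property of the resolvent on $G$ directly: the function $y\mapsto P_y(\tau_{\partial G}<\infty)$ is excessive and vanishes a.e., so by fine continuity and absolute continuity of the resolvent it vanishes everywhere.
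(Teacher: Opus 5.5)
Your argument is correct, but it reaches the conclusion by a different route from the paper.

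\textbf{The paper's route.} It sets $u(x):=P_x(\tau_{\partial G}<\infty)$ and uses the strong Markov property to say that $u$ is $L$-harmonic in $G$. It then appeals to interior elliptic regularity (Stroock--Varadhan, Bass) to conclude that $u$ is continuous, so vanishing $\mu$-a.e.\ forces $u\equiv0$.

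\textbf{Your route.} You never show that $u$ is harmonic or continuous. You apply the simple Markov property at a small deterministic time $s$ on $\{\tau_K>s\}$. You use only that the killed law $p^K_s(x,\cdot)$ does not charge the Lebesgue-null exceptional set $N$, and then let $s\downarrow0$ using $P_x(\tau_K>0)=1$.

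\textbf{What each needs.} Your regularity input is more modest: absolute continuity of the killed kernel, for instance from Aronson-type bounds. If you average your inequality over $t\in(0,s)$, you need even less, namely absolute continuity of the occupation measure of $X$ on $[0,s\wedge\tau_K]$, which a Krylov-type estimate gives for measurable uniformly elliptic coefficients. The paper's one-line appeal instead needs continuity of bounded harmonic functions with merely measurable exit data, which is a Harnack or strong-Feller type statement.

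\textbf{Shared hypothesis.} Both proofs rest on the point you rightly flag as the main obstacle. The process started at every $x$ must be a canonical version, for instance the SDE or martingale-problem solution. A process known only through its Dirichlet form is determined only outside a set of zero capacity, and ``for every $x$'' can fail on such a set. In the paper's setting this is supplied by the SDE of Theorem~\ref{theorem 1}, whose coefficients are locally Lipschitz and uniformly elliptic on compact subsets of $G$. By pathwise uniqueness up to $\tau_K$, $X$ stopped at $\tau_K$ coincides with a uniformly elliptic diffusion obtained by extending the coefficients outside $B_r(x)$, and its transition laws have densities. This closes your identification step without the Dirichlet-form detour.
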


\begin{proof}
We write $\tau=\tau_{\partial G}$. By Lemma \ref{cor Dirichlet 1},
\[
P_x(\tau<\infty)=0
\qquad
\mbox{for $\mu$-almost every }x\in G.
\]
Define
\[
u(x):=P_x(\tau<\infty) \quad \text{for } x \in \overline G. 
\]
By the strong Markov property, $u$ is $L$-harmonic in $G$ in the probabilistic sense. Since $a$ is continuous and locally elliptic in $G$, standard elliptic regularity for locally elliptic operators implies that $u$ is continuous in $G$; see Stroock and Varadhan \cite{SV79}, or \cite[p.~65 and Ch.~VI]{Ba98}.

Since $u=0$ for $\mu$-almost every $x\in G$, and since $\mu$ is the normalized Lebesgue measure on $G$, every nonempty open subset of $G$ has positive $\mu$-measure. If there existed $x_0\in G$ such that $u(x_0)>0$, then by continuity there would exist $r>0$ and $\varepsilon>0$ such that
\[
u(x)\geq\varepsilon
\qquad
\mbox{for all }x\in B(x_0,r)\subset G.
\]
This contradicts $u=0$ $\mu$-almost everywhere. Hence $u(x)=0$ for every $x\in G$, which proves the lemma.
\end{proof}

\begin{lemma}\label{cor Dirichlet 3}
Let $G$, $\mu$, $L$ and $a$ be as in Lemma \ref{cor Dirichlet 2}. Assume in addition that $x\mapsto a(x)$ is globally Lipschitz continuous and that $G$ satisfies the boundary-layer estimate
\[
\big|\{x\in G:\mathrm d(x,\partial G)<r\}\big|
\leq C_G r
\qquad
\mbox{for all sufficiently small }r>0.
\]
 Set $\log^+(r):=\max\{0,\log r\}$, then  for every $\eps\in(0,\frac12)$ the function
\begin{equation}\label{example}
f(x) =
\bigg(  \log^+ \frac1{\mathrm d(x,\partial G)}\bigg)^\eps
\end{equation}
satisfies
\[
|f(x)|\to\infty
\qquad\mbox{as }x\to\partial G \qquad \text{and} \qquad 
\|f\|_{L^2(\mu)}^2+\mathcal E(f,f)<\infty.
\]
Consequently, $(X_t)_{t\geq0}$ starting from any $x\in G$ does not hit $\partial G$ in finite time almost surely.
\end{lemma}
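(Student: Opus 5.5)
Write $\rho(x):=\mathrm d(x,\partial G)$. The plan is to verify the two hypotheses \ref{1}--\ref{2} of Lemma \ref{cor Dirichlet 1} for $f=(\log^+ 1/\rho)^\eps$; the non-hitting conclusion then follows directly from Lemma \ref{cor Dirichlet 2}. Condition \ref{1} is immediate, because $\rho(x)\to0$ as $x\to\partial G$ and hence $\log(1/\rho)\to\infty$. For the $L^2$ bound we use the layer-cake formula together with the boundary-layer estimate. We have $f^2>s$ exactly when $\rho<\exp(-s^{1/\eps})$, so
\[
\int_G f^2\,d\mu \le 1+\int_1^\infty |G|^{-1}\big|\{\rho<\e^{-s^{1/\eps}}\}\big|\,\d s\lesssim 1+\int_1^\infty \e^{-s^{1/\eps}}\d s<\infty .
\]
Here we use the estimate only for $r\le r_0$ small; larger $r$ contribute at most a bounded amount since $G$ is bounded.

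The energy term is the main step. The function $\rho$ is $1$-Lipschitz with $|\nabla\rho|\le1$ a.e. On $\{\rho<1\}$ the chain rule gives
\[
\nabla f=-\eps\Big(\log\tfrac1\rho\Big)^{\eps-1}\frac{\nabla\rho}{\rho},
\]
and $\nabla f=0$ on $\{\rho>1\}$. For functions in the domain we have $\mathcal E(f,f)=\tfrac12\int\langle a\nabla f,\nabla f\rangle\,d\mu$; for our $f$ this is to be understood through the closure of the form, see the approximation step below. The key input is the Lipschitz degeneracy. Since $a$ vanishes on $\partial G$ and is globally Lipschitz, choosing $y\in\partial G$ with $|x-y|=\rho(x)$ gives $\|a(x)\|=\|a(x)-a(y)\|\le \mathrm{Lip}(a)\,\rho(x)$. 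Therefore
\[
\langle a\nabla f,\nabla f\rangle\lesssim \rho\cdot\frac{(\log 1/\rho)^{2\eps-2}}{\rho^2}=\frac{(\log 1/\rho)^{2\eps-2}}{\rho}.
\]
To integrate this we use a dyadic decomposition into the shells $\{2^{-k-1}\le\rho<2^{-k}\}$. By the boundary-layer estimate each shell has measure $\lesssim 2^{-k}$, and on it the integrand is $\lesssim 2^{k}k^{2\eps-2}$. The energy is thus bounded by $\sum_k k^{2\eps-2}$, which converges precisely because $2-2\eps>1$, i.e. $\eps<\tfrac12$; this is where the threshold in the statement comes from. (Equivalently, one can write the integral as a Stieltjes integral $\int_0^{r_0} g(r)\,\d m(r)$ with $m(r)\le C_G r$, integrate by parts, and obtain $\int_0 \frac{\d r}{r(\log 1/r)^{2-2\eps}}<\infty$.)

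The technical obstacle is that $f$ is unbounded and only Lipschitz on compact subsets of $G$, so it is not literally in the domain of $L$. To handle this, I would truncate, setting $f_n:=\min(f,n)$. Each $f_n$ is Lipschitz with support of its gradient inside $\{\rho\ge \e^{-n^{1/\eps}}\}$, so it lies in the form domain, and $\mathcal E(f_n,f_n)$ together with $\|f_n\|_{L^2}^2$ are bounded uniformly in $n$ by the computations above. I would then apply Lemma \ref{lemma Dirichlet} (via its variational proof, which only uses the form) to $f_n$ with $\ell<n$. This gives $P_\mu[\sup_{t\le T}f(X_t)\ge \ell]\le \frac{\e}{\ell}C$ uniformly, because $\{f\ge\ell\}=\{f_n\ge\ell\}$. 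This is all that the proof of Lemma \ref{cor Dirichlet 1} requires, and with it the argument of Lemmas \ref{cor Dirichlet 1}--\ref{cor Dirichlet 2} goes through to give $P_x(\tau_{\partial G}<\infty)=0$ for every $x\in G$.
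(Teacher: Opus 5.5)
Your argument follows the paper's proof step for step: the bound $\|a(x)\|\le\mathrm{Lip}(a)\,\rho(x)$, the dyadic shells $\{2^{-k-1}\le\rho<2^{-k}\}$ controlled by the boundary-layer estimate, and the series $\sum_k k^{2\eps-2}$. Your truncation $f_n=\min(f,n)$ is a sensible addition, since the paper applies Lemma~\ref{lemma Dirichlet} to an $f$ that is not in the domain of $L$.

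There is, however, a genuine gap, and the paper's proof has it too when it says it ``clearly suffices'' to integrate over $\{d<1/2\}$. The cutoff in $\log^+$ creates an interior singularity on the level set $\{\rho=1\}$. Write $f=h(\rho)$ with $h(s):=(\log^+\frac1s)^\eps$. As $s\uparrow1$ one has $h(s)\approx(1-s)^\eps$ and
\[
h'(s)^2=\eps^2\Big(\log\frac1s\Big)^{2\eps-2}s^{-2}\approx\eps^2(1-s)^{2\eps-2}.
\]
This is not integrable at $s=1$ precisely when $\eps\le\frac12$, which is the reverse of the condition needed at $\partial G$. The set $\{x\in G:\rho(x)\ge\frac12\}$ is a compact subset of $G$, so $a\ge\delta_0 I$ there for some $\delta_0>0$. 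The coarea formula (with $|\nabla\rho|=1$ a.e.) then gives
\[
\mathcal E(f,f)\ \ge\ \frac{\delta_0}{2|G|}\int_{1/2}^{1}h'(s)^2\,\mathcal H^{d-1}\big(\{\rho=s\}\big)\,\d s=\infty
\]
whenever $\sup_G\rho>1$. In that case $\mathcal H^{d-1}(\{\rho=s\})$ is bounded below for $s\in(\frac12,1)$, by the isoperimetric inequality applied to $\{\rho>s\}\supset\{\rho>1\}\neq\emptyset$.

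For example, $G=B_3(0)$ with $a(x)=\min\{1,3-|x|\}\,I$ satisfies all hypotheses of the lemma, yet $\mathcal E(f,f)=\infty$ for every $\eps\in(0,\frac12)$. For $\eps\ge\frac12$ the boundary part diverges instead, so no $\eps$ works for this $f$. Domains with $\sup\rho>1$ also arise in the percolation application.

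In your write-up the gap shows up in three places:
\begin{itemize}
\item The shell bound $\lesssim 2^k k^{2\eps-2}$ is infinite for $k=0$, i.e.\ on $\{\frac12\le\rho<1\}$. When $\sup_G\rho>1$, that shell really does contribute $+\infty$.
\item $f$ is not Lipschitz on compact subsets of $G$ that meet $\{\rho=1\}$.
\item Since $f<1$ on $\{\rho>\e^{-1}\}$, you have $f_n=f$ there for all $n\ge1$. So $\mathcal E(f_n,f_n)=\infty$ rather than uniformly bounded.
\end{itemize}

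The repair is to change $f$ away from $\partial G$ only. For instance, take $f:=(\log(\Lambda/\rho))^\eps$ with $\Lambda:=\e\max\{1,\sup_G\rho\}<\infty$, or $f:=(\log(1+1/\rho))^\eps$. Then $\log(\Lambda/\rho)\ge1$ on $G$, so $|\nabla f|\le\eps/\rho$ is bounded on $\{\rho\ge r_0\}$. Moreover $(\log(\Lambda/\rho))^{2\eps-2}\le(\log\frac1\rho)^{2\eps-2}$ for $\rho<1$, so your dyadic estimate near $\partial G$ and your truncation argument go through verbatim.

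A minor slip: in the layer-cake step, $f^2>s$ corresponds to $\rho<\exp(-s^{1/(2\eps)})$, not $\exp(-s^{1/\eps})$. This does not affect convergence.
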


\begin{proof}
Let
\[
d(x):=\mathrm d(x,\partial G).
\]
Since $a$ is globally Lipschitz and vanishes on $\partial G$, there exists $C>0$ such that
$|a(x)|\leq C d(x)$. Indeed, for any $x\in G$ and any $y\in\partial G$ with $|x-y|=d(x)$, we have $a(y)=0$ and hence $|a(x)|=|a(x)-a(y)|\leq C|x-y|=C d(x)$.

We now estimate the energy of $f$. Since $d$ is Lipschitz and $|\nabla d|=1$ almost everywhere, the function $f$ is weakly differentiable near the boundary and
\[
|\nabla f(x)|
\leq
C
\frac{1}{d(x)}
\bigg(\log^+\frac1{d(x)}\bigg)^{\eps-1}
\]
for $d(x)$ sufficiently small. Thus combining with $|a(x)|\leq C d(x)$  we have an estimate on the Dirichlet form
\begin{equation}\label{Dir}
\mathcal E(f,f)
=\frac12\int_G\langle a(x)\nabla f(x),\nabla f(x)\rangle\,\d x \leq
C
\int_G
\frac1{d(x)}
\bigg(\log^+\frac1{d(x)}\bigg)^{2\eps-2}
\,\d x.
\end{equation}
It remains to check that the latter integral is finite, for which it clearly suffices to estimate
\[
I := \int_{\{d(x)<1/2\}} \frac1{d(x)}
\bigg(\log\frac1{d(x)}\bigg)^{2\eps-2}
\,\d x.
\]
For $r\in(0,1)$, define
\[
A_r
:=
\{x\in G:d(x)<r\} \qquad\mbox{so that}\qquad |A_r|
\leq
C_G r
\]
by assumption. We decompose the boundary layer dyadically: For $k\geq0$, set
\[
E_k
:=
\big\{ x\in G : 
2^{-(k+1)} \leq d(x) < 2^{-k} \big\} 
\qquad\mbox{so that} \qquad |E_k|
\leq 
|A_{2^{-k} }|
\leq
C_G\,2^{-k}.
\]
Moreover, on $E_k$,
\[
\frac1{d(x)}
\bigg(\log\frac1{d(x)}\bigg)^{2\eps-2}
\leq
C 2^k 
\bigg( \log {2^k} \bigg)^{2\eps-2}.
\]
Therefore,
\[
I =
\sum_{k=1}^\infty
\int_{E_k} \frac1{d(x)}
\bigg(\log\frac1{d(x)}\bigg)^{2\eps-2}  \!\d x
\leq
C
\sum_{k=0}^\infty
2^k 
\Big( k\log 2\Big)^{2\eps-2}
|E_k|
\leq C \sum_{k=1}^\infty k^{2\eps-2}
\]
which converges precisely when
$2\eps-2<-1$, that is, for 
$\eps\in(0,\tfrac12)$. Thus the Dirichlet energy \eqref{Dir} is finite in this regime. 
Similarly,
\[
\|f\|_{L^2(\mu)}^2
\leq
C\int_G
\bigg(\log\frac1{d(x)}\bigg)^{2\eps}
\,\d x
<\infty,
\]
again by the previous argument, since logarithmic divergences are integrable against a boundary layer of order $r$. Thus $\|f\|_{L^2(\mu)}^2+\mathcal E(f,f)<\infty$ and 
the conclusion now follows from Lemma \ref{cor Dirichlet 2}.
\end{proof}

\begin{lemma}\label{cor Dirichlet 4}
The conclusion of Lemma \ref{cor Dirichlet 3} holds for the continuum percolation domains considered in this paper.
\end{lemma}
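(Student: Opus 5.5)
The strategy is to localize. The cluster $\mathcal C_\infty(\omega)$ is unbounded, so Lemma \ref{cor Dirichlet 3} cannot be applied to it directly. Fix $\omega\in\Omega_0$ and an exhausting sequence of bounded open sets $G_R$. A natural choice is $G_R:=\mathcal C_R(\omega)$, the union of the balls $B_{1/2}(y)$ with $y\in\omega\cap B_R(0)$ that belong to $\mathcal C_\infty(\omega)$. An alternative is $G_R:=\mathcal C_\infty(\omega)\cap B_R(0)$, possibly smoothed near $\partial B_R$. On each $G_R$ we run the diffusion with generator $\frac12\mathrm{div}(a\nabla)$ and $c\equiv0$, stopped at the exit time $\sigma_R$ from $B_R$.

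The first step is to check the hypotheses of Lemmas \ref{cor Dirichlet 2}--\ref{cor Dirichlet 3} on $G_R$, using only the part of $\partial G_R$ that lies in $\partial\mathcal C_\infty$.
\begin{enumerate}
\item By \ref{f1}(a),(b), $a$ is globally Lipschitz, bounded, and vanishes on $\partial\mathcal C_\infty$. Hence $|a(x)|\le C\,\mathrm d(x,\partial\mathcal C_\infty)$.
\item By \ref{f1}(d) together with continuity of $x\mapsto\xi(\tau_x\omega)$, $a$ is locally elliptic on compact subsets of $\mathcal C_\infty$.
\item The boundary-layer estimate $|\{x\in G_R:\mathrm d(x,\partial\mathcal C_\infty)<r\}|\le C_R r$ must be verified. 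Since $\omega$ is locally finite, $\partial\mathcal C_\infty\cap B_{R+1}$ is contained in finitely many spheres $\partial B_{1/2}(y)$ with $y\in\omega\cap B_{R+2}$. The $r$-neighbourhood of each sphere has volume at most $C r$, so $C_R\le C\,\#(\omega\cap B_{R+2})<\infty$.
\end{enumerate}
This is a deterministic, quenched statement; no probabilistic input beyond local finiteness of $\omega$ is needed.

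With these facts, the barrier $f$ of \eqref{example}, built from $\mathrm d(x,\partial\mathcal C_\infty)$, has finite $L^2$ norm and finite Dirichlet energy on $G_R$, by the same dyadic computation as in Lemma \ref{cor Dirichlet 3}. The obstacle is the artificial boundary $\partial B_R$, where $f$ does not blow up and $a$ does not vanish. I would handle it in one of two ways.
\begin{enumerate}
\item Replace the symmetric process by one killed (or reflected) on $\partial B_R$. Lemma \ref{lemma Dirichlet} only requires a symmetric Markov process, which the killed process is, with $\mu$ now sub-probability or renormalized.
\item Alternatively, multiply $a$ by a smooth cutoff $\phi_R$ vanishing at $\partial B_{R+1}$, so that the new domain is $\mathcal C_\infty\cap B_{R+1}$. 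Take the barrier $f+g_R$, where $g_R$ is a log-barrier in $\mathrm d(x,\partial B_{R+1})$; this boundary is smooth, so the layer estimate is trivial there.
\end{enumerate}
Either way, Lemma \ref{cor Dirichlet 2} gives that from every $x$ the localized process does not reach $\partial\mathcal C_\infty$ before leaving $B_R$.

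Next, let $R\to\infty$. The localized processes agree with the original diffusion up to $\sigma_R$, by pathwise uniqueness: $\sigma$ and $\mathrm{div}\,a$ are locally Lipschitz away from the boundary by \ref{f1}(b),(c). Thus $\tau^\omega\ge\sigma_R$ almost surely for every $R$. Since the coefficients are bounded (\ref{f1}(c),(d)), there is no explosion, so $\sigma_R\to\infty$ and hence $\tau^\omega=\infty$.

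The main obstacle is the localization step. One must make sure that the cutoff does not destroy the symmetry or the local ellipticity needed in Lemma \ref{cor Dirichlet 2}, and that the continuity argument there, which relies on elliptic regularity for $u(x)=P_x(\tau<\infty)$, still applies near the junction of $\partial\mathcal C_\infty$ and $\partial B_R$. I would first try the killed-process version, because it leaves $a$ unchanged and needs the barrier only on the cluster boundary.
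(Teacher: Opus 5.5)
Your proposal follows the paper's proof: localize to $\mathcal C_\infty(\omega)\cap B_R$, obtain the boundary-layer estimate from local finiteness of $\omega$, apply Lemma~\ref{cor Dirichlet 3}, and let $R\to\infty$ using path continuity. You go further than the paper in handling the artificial boundary $\partial B_R$, where $a$ does not vanish even though the paper applies Lemma~\ref{cor Dirichlet 3} to $G\cap B_R$ without comment. Of your two fixes, the cutoff $\phi_R a$ is the cleaner one, because it satisfies every hypothesis of Lemma~\ref{cor Dirichlet 3} on $\mathcal C_\infty\cap B_{R+1}$ as stated. The killed-process version also needs $f$ cut off near $\partial B_R$ so that it lies in the form domain.
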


\begin{proof}
Fix $\omega\in\Omega_0$ and set $G=\mathcal C_\infty(\omega)$. For $R>0$, let
\[
G_R:=G\cap B_R.
\]
Since the point configuration is locally finite, $G_R$ is contained in a finite union of balls and therefore satisfies the boundary-layer estimate required in Lemma \ref{cor Dirichlet 3}. Applying Lemma \ref{cor Dirichlet 3} to $G_R$, we obtain that the diffusion does not hit $\partial G_R$ in finite time before leaving $G_R$.

Suppose now that the diffusion starting from $x\in G$ hits $\partial G$ at some finite time. By continuity of the path, the trajectory up to this hitting time is contained in $B_R$ for some sufficiently large $R$. Hence it would hit $\partial G_R$ in finite time, contradicting the conclusion for $G_R$. Letting $R\uparrow\infty$ proves the claim.
\end{proof}

\subsection{Concluding the proof of Theorem \ref{theorem 1}.}\label{subsec entropy}
By Lemmas \ref{cor Dirichlet 1}--\ref{cor Dirichlet 4}, we have
\[
\tau^\omega=\infty
\qquad
\mbox{almost surely under }P=P_x^{0,\omega},
\]
that is, for the diffusion with control $c\equiv0$. We now show that
\[
\tau^\omega=\infty
\qquad
\mbox{almost surely under }P^c=P_x^{c,\omega}
\]
for any $c\in\mathbf C_T$.

By the relative entropy inequality, for every nonnegative random variable $F$ and every $\lambda>0$,
\begin{equation}\label{rel ent}
E^{P^c}[F]
\leq
\frac1\lambda
\log E^P[\e^{\lambda F}]
+
\frac1\lambda H(P^c\,|\,P),
\end{equation}
where $H(P^c\,|\,P)$ denotes the relative entropy of $P^c$ with respect to
$P$, that is,
\[
H(P^c\,|\,P)
=
E^{P^c}\!\bigg[\log\!\left(\frac{dP^c}{dP}\right)\bigg] \qquad\mbox{whenever $P^c\ll P$, and $H(P^c\,|\,P)=+\infty$ otherwise.}
\]
Now by Girsanov's formula,
\begin{equation}\label{Girsanov}
H(P^c\,|\,P)\big|_{[0,T]}
=
\frac12
E^{P^c}
\bigg[
\int_0^T \|c(s)\|_{a(X_s)}^2\,\d s
\bigg]
\leq C_T<\infty,
\end{equation}
because $c\in\mathbf C_T$ and $a$ is bounded above by \eqref{eq:ellip-bounds} in Assumption \ref{f1}.

In \eqref{rel ent}, choose
\[
F=\e^{-\tau^\omega}.
\]
Since $\tau^\omega=\infty$ $P$-almost surely, we have $F=0$ $P$-almost surely. Hence
\[
E^P[\e^{\lambda F}]=1.
\]
Therefore, for every $\lambda>0$,
\[
E^{P^c}[\e^{-\tau^\omega}]
\leq
\frac1\lambda H(P^c\,|\,P)
\leq
\frac{C_T}{\lambda}.
\]
Letting $\lambda\to\infty$ gives
\[
E^{P^c}[\e^{-\tau^\omega}]=0.
\]
Since $\e^{-\tau^\omega}>0$ on $\{\tau^\omega<\infty\}$, we conclude that
\[
P^c(\tau^\omega<\infty)=0.
\]
Thus $\tau^\omega=\infty$ $P_x^{c,\omega}$-almost surely for every admissible control $c\in\mathbf C_T$.

The existence and uniqueness of the strong solution follow from the global Lipschitz continuity of $a$ and the local Lipschitz continuity of $\sigma$ and $\mathrm{div}\,a$ away from the boundary, together with the fact that the boundary is not attained. This completes the proof of Theorem \ref{theorem 1}.
\qed

\begin{remark}\label{remark:weighted}
The condition $c\in \mathbf C_T$ is stronger than what is needed for the
Girsanov argument above. Indeed, since $a$ is uniformly bounded above,
\[
\|c(s)\|_{a(X_s)}^2
=
\langle a(X_s)c(s),c(s)\rangle
\leq C |c(s)|^2.
\]
Thus
\[
E^P\left[\int_0^T |c(s)|^2\,ds\right]<\infty
\]
implies the intrinsic weighted integrability condition
\[
E^P\left[\int_0^T \|c(s)\|_{a(X_s)}^2\,ds\right]<\infty.
\]
The latter is the quantity that appears in the relative entropy estimate
used in the proof of Theorem~\ref{theorem 1}.\qed 
\end{remark}

\begin{remark}[{\bf Sharpness of the Lipschitz threshold}]
\label{remark:sharpness}

The following model example illustrates that the Lipschitz regularity of $x\mapsto a(x,\omega)$ in Assumption~\ref{f1}(b) is natural from the point of view of boundary non-attainment. Consider the half-plane
\[
D=\{(x,y)\in\R^2:x>0\}
\quad\mbox{with distance-to-the-boundary function}\quad 
d(x,y)=x.
\]
For $0<c<1$, consider the divergence-form operator
\[
L=\nabla\cdot(d^c\nabla)
=
\nabla\cdot(x^c\nabla).
\]
Restricting to functions depending only on the first coordinate reduces the problem to the one-dimensional diffusion with generator
\[
Lf(x)
=
(x^c f'(x))'
=
x^c f''(x)+c x^{c-1}f'(x),
\qquad x>0.
\]
The harmonic equation $\nabla\cdot(x^c\nabla h)=0$
admits nonconstant bounded solutions of the form
\[
h(x)=A+B x^{1-c}
\]
near the boundary $x\to 0$ precisely when $c<1$. Thus, with $\tau_{a,b}= \inf\{t\geq 0: X_t\in \{a,b\}\}$, the stopped process $h(X_{t\wedge\tau_{a,b}})$ is a bounded martingale so that by optional stopping we have $h(x)=E_x[h(X_{\tau_{a,b}}]$. Choosing $h(x)=x^{1-c}$, we get 
$$
P_x[\tau_a<\tau_b]= \frac{b^{1-c}- x^{1-c}}{b^{1-c}-a^{1-c}}
$$
Letting $a\downarrow 0$ and then $b\to\infty$ we get $P_x[\tau_0<\infty]=1$ and thus the process hits the boundary in finite time almost surely.\qed
\end{remark}

\subsection{Example of degenerate mobility satisfying \ref{f1}}\label{sec exmaple a}
In this section we will construct a function $x\mapsto a(x,\omega)$ satisfying the assumptions in \ref{f1} for any fixed $\omega\in\Omega_0$. 
\begin{lemma}\label{lemma}
Let $U\subsetneq \mathbb R^d$ be a nonempty open set, and let
\[
d_U(x):=\operatorname{dist}(x,\partial U),\qquad x\in U.
\]
Then there exists a construction 
\[
U \mapsto \rho_U \qquad\mbox{with} \qquad \rho_U\in C^\infty(U),\qquad \rho>0 \text{ in }U,
\]
such that for every $x\in U$, we have
\begin{equation}\label{rho est}
c\, d_U(x)\le \rho_U(x)\le C\, d_U(x), \qquad\text{and}\qquad 
|D^\alpha \rho_U|\leq C_{|\alpha|} d_U(x)^{1-|\alpha|} 
 \text{ for }\alpha \in \mathbb N_0^d,  
\end{equation}
where the constant  $ C_{|\alpha|}$  depends only on $d$ and 
 $|\alpha|=\sum_1^d \alpha_i$. In particular, $|\nabla \rho_U(x)|\le C_1$ and $\rho_U$ is globally Lipschitz on $U$, and its extension by $0$ on $\mathbb R^d\setminus U$ is globally Lipschitz on $\mathbb R^d$.

Moreover, the construction is translation-covariant in the sense 
\begin{equation}\label{rho tran}
\rho_{U+z}(x+z)=\rho_U(x) \qquad\forall x\in U.
\end{equation}
\end{lemma}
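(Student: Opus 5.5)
This is the classical regularized distance of Whitney–Stein type. The plan is to build $\rho_U$ from the Whitney decomposition of $U$ and a partition of unity, taking care that every choice is translation-covariant. The usual Whitney decomposition uses dyadic cubes, which is covariant only under integer dyadic shifts. To obtain covariance under all $z\in\R^d$, I would avoid cubes and mollify $d_U$ at the scale $d_U$ itself, with a fixed kernel.

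\textbf{Construction.} Fix a radial $\varphi\in C_c^\infty(B_1)$ with $\varphi\ge0$ and $\int\varphi=1$. For $x\in U$ and $t>0$ set
\[
F(x,t):=\int_{\R^d} d_U(x-\tfrac{t}{4}y)\,\varphi(y)\,\d y-t .
\]
Define $\rho_U(x)$ as the unique $t$ with $F(x,t)=0$.

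\textbf{Step 1: well-definedness and comparability.} Since $d_U$ is $1$-Lipschitz, the convolution term differs from $d_U(x)$ by at most $t/4$. Its $t$-derivative is at most $1/4$ in absolute value. Hence $t\mapsto F(x,t)$ has derivative at most $-3/4$, is positive near $t=0$ and negative for large $t$. So there is a unique root, and it satisfies $\tfrac45 d_U\le\rho_U\le\tfrac43 d_U$, which gives the first half of \eqref{rho est}.

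\textbf{Step 2: smoothness and derivative bounds.} Write the convolution as $\int d_U(w)\,\varphi\big(4(x-w)/t\big)(4/t)^d\,\d w$. Then $F$ is $C^\infty$ in $(x,t)$, and since $\partial_tF\le-3/4$, the implicit function theorem gives $\rho_U\in C^\infty(U)$.

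For the bounds I would first prove a homogeneity estimate. For $|\beta|+k\ge1$,
\[
|\partial_x^\beta\partial_t^kF|\le C\,t^{1-|\beta|-k}.
\]
For first derivatives this follows from the Lipschitz bound on $d_U$ (one derivative falls on $d_U$). For higher derivatives, put one derivative on $d_U$ and the remaining ones on the kernel, each costing a factor $t^{-1}$. Then I would differentiate the identity $F(x,\rho_U(x))=0$ repeatedly via Faà di Bruno and induct on $|\alpha|$, using $|\partial_tF|\ge3/4$ and $\rho_U\simeq d_U$. Each term has total homogeneity $d_U^{1-|\alpha|}$, with constants depending only on $d$, $|\alpha|$ and the fixed $\varphi$.

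\textbf{Step 3: Lipschitz extension and covariance.} The bound $|\nabla\rho_U|\le C_1$ gives Lipschitz continuity along segments inside $U$. For general $x,y$, including points outside $U$ or segments leaving $U$, use $\rho_U\le C d_U$ and the fact that the segment meets $\partial U$. This yields the global Lipschitz property on $U$ and of the zero extension to $\R^d$.

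Covariance \eqref{rho tran} is immediate: $d_{U+z}(x+z)=d_U(x)$, so $F_{U+z}(x+z,t)=F_U(x,t)$, and the roots coincide.

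The main obstacle is the bookkeeping in the higher-derivative induction of Step 2, specifically checking that the implicit-function recursion preserves the homogeneity $t^{1-|\beta|-k}$. I would organize it by proving, by induction, the statement $|D^\alpha\rho_U|\le C_{|\alpha|}\rho_U^{1-|\alpha|}$.
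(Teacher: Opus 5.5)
Your proof is correct, but it uses a different construction from the paper's. You use a Lieberman-type regularized distance: $\rho_U(x)$ is the unique solution of $t=\int d_U(x-\tfrac t4 y)\varphi(y)\,\d y$. Smoothness then comes from the implicit function theorem via $\partial_tF\le-\tfrac34$. The bounds $|D^\alpha\rho_U|\lesssim d_U^{1-|\alpha|}$ come from the scaling estimate $|\partial_x^\beta\partial_t^kF|\lesssim t^{1-|\beta|-k}$ together with a Fa\`a di Bruno induction.

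The paper avoids any implicit equation. With $r(y)=d_U(y)/8$ and $K_U(x,y)=r(y)^{-d}\eta\big((x-y)/r(y)\big)$, where $\eta\equiv1$ on $B_{1/2}$, it takes the explicit weighted average $\rho_U=N_U/S_U$. Here $S_U(x)=\int K_U(x,y)\,\d y$ and $N_U(x)=\int d_U(y)K_U(x,y)\,\d y$. Because the scale is attached to the integration variable $y$, every $x$-derivative falls on $\eta$ and costs a factor $r(y)^{-1}\asymp d_U(x)^{-1}$. This gives $|D^\alpha S_U|\lesssim d_U^{-|\alpha|}$ and $|D^\alpha N_U|\lesssim d_U^{1-|\alpha|}$ directly. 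The normalization by $S_U$ is needed because a $y$-dependent scale destroys $\int K_U\,\d y=1$, but it is harmless since $0<c\le S_U\le C$, and the Leibniz/quotient rule finishes the argument.

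So the paper's route is shorter and needs no bookkeeping beyond the quotient rule. Yours gives a kernel-independent Lipschitz constant, $|\nabla\rho_U|=|\partial_xF|/|\partial_tF|\le\tfrac43$, which tends to $1$ if you shrink the factor $\tfrac14$; the paper's constant depends on $\eta$.

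Two small points should be made explicit.
First, $x-\tfrac t4 y$ leaves $U$ for large $t$, so you must fix a $1$-Lipschitz, translation-covariant extension of $d_U$ to $\R^d$, e.g.\ $\operatorname{dist}(\cdot,\R^d\setminus U)$. Alternatively, restrict to the open set $\{x\in U,\ 0<t<2d_U(x)\}$, on which $F(x,2d_U(x))<0$ and all evaluation points stay in $U$.
Second, for pure $t$-derivatives, ``one derivative on $d_U$'' means taking the first $t$-derivative in the $y$-variable, which produces $\nabla d_U$, before switching to the $w$-variable. More simply, at $t=\rho_U(x)\asymp d_U(x)$ you may put all derivatives on the kernel, since $d_U\lesssim t$ on its support; only the lower bound $|\partial_tF|\ge\tfrac34$ genuinely needs the Lipschitz property of $d_U$.
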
 
\begin{proof}
We fix a non-negative function $\eta\in C^\infty_c(B_1(0))$ such that $\eta\equiv 1$ 
on $B_{1/2}(0)$. Define 
$$
r(y)= r_U(y)=  \frac{d_U(y)}{8}, \qquad K_U(x,y)= r(y)^{-d} \eta \bigg(\frac{x-y}{r(y)}\bigg)\geq 0,
$$
and define $\rho_U(\cdot)$ to be the weighted average of $d_U(\cdot)$ with respect to the kernel $K_U$:
$$
\rho_U(x):=\frac{N_U(x)}{S_U(x)}, \quad S_U(x):=\int_{\R^d} K_U(x,y)\, \d y, \quad N_U(x):=\int_{\R^d} d_U(y)K_U(x,y)\,\d y.
$$
We claim that $S_U$ is bounded above and below by positive constants (and thus $\rho_U$ is well-defined).  Indeed, if $K_U(x,y)\neq 0$, then
$|x-y|\le r(y)=\tfrac18 d_U(y)$. Since $d_U$ is $1$-Lipschitz, this implies
$d_U(x)\le \tfrac98 d_U(y)$ and
$d_U(y)\le \tfrac87 d_U(x)$. Hence $r(y)\asymp d_U(x)$ on the support of $K_U(x,\cdot)$.
Now for the upper bound, the support of $K_U(x,\cdot)$ is contained in a ball of radius $C d_U(x)$, and
$K_U(x,y)\le C d_U(x)^{-d}$. Thus
\[
S_U(x)\le C d_U(x)^{-d}\cdot d_U(x)^d \le C.
\]
For the lower bound, if $|x-y|\le c d_U(x)$ with $c$ small enough, then $d_U(y)\asymp d_U(x)$ and
$|\frac{x-y}{r(y)}|\le \tfrac12$,
so $\eta=1$. Hence $K_U(x,y)\ge c d_U(x)^{-d}$
on $B(x,c d_U(x))$, and therefore
\[
S_U(x)\ge c d_U(x)^{-d}\cdot d_U(x)^d \ge c.
\]
Thus $0<c\le S_U(x)\le C$. Next, since $\rho_U(x)$ is a weighted average of $d_U(y)$ and $d_U(y)\asymp d_U(x)$ on the support of $K_U(\cdot,y)$, we obtain
\[
c\,d_U(x)\le \rho_U(x)\le C\,d_U(x).
\]

We now differentiate and note that since $r(y)$ does not depend on $x$,
\[
D_x^\alpha K_U(x,y)
=
r(y)^{-d-|\alpha|}(D^\alpha\eta)\!\left(\frac{x-y}{r(y)}\right).
\]
Because $\eta\in C_c^\infty$, $|D^\alpha\eta|\le C_\alpha$, hence
\[
|D_x^\alpha K_U(x,y)|\le C_\alpha r(y)^{-d-|\alpha|}.
\]
Using again $r(y)\asymp d_U(x)$ on the support and the size of the integration region,
\[
|D^\alpha S_U(x)|\le C_\alpha d_U(x)^{-|\alpha|},
\quad
|D^\alpha N_U(x)|\le C_\alpha d_U(x)^{1-|\alpha|}.
\]
Since $0<c\leq S_U(\cdot)\leq C <\infty$, the quotient rule gives $|D^\alpha (1/S_U)|\leq C_\alpha d_U(x)^{-|\alpha|}$. Since $\rho_U= N_U S_U^{-1}$, $D^\alpha\rho_U= \sum_{\beta \leq \alpha} \binom{\alpha}{\beta}\, D^\beta N_U \,\, D^{\alpha-\beta}(S_U^{-1})$. Therefore, 
\[
|D^\alpha \rho_U(x)|\le C_\alpha d_U(x)^{1-|\alpha|}.
\]
In particular, $|\nabla \rho_U(x)|\le C$. This proves \eqref{rho est}. To prove \eqref{rho tran}
we note that for $z\in\mathbb R^d$,
$d_{U+z}(x+z)=d_U(x)$ and so  $r_{U+z}(y+z)=r_U(y)$ and 
$K_{U+z}(x+z,y+z)=K_U(x,y)$. Thus,  
\[
S_{U+z}(x+z)= \int_{\R^d}K_{U+z} (x+z,y) \d y= \int_{\R^d} K_{U+z} (x+z,y+z) dy= S_U(x).
\]
Likewise $N_{U+z}(x+z)=N_U(x)$, hence
$\rho_{U+z}(x+z)=\rho_U(x)$. This shows \eqref{rho tran} and completes the proof of the lemma. 
\end{proof}

 We are now ready to complete the construction of a diffusion matrix $a$ that satisfies 
the smoothness and positivity conditions assumed in \ref{f1}. 

\begin{cor}\label{cor example a}
Let $\mathcal C(\omega):=\bigcup_{y\in\omega} B_{1/2}(y)$ 
and as before assume $\mathcal C(\omega)$ has a unique unbounded connected component $\mathcal C_\infty(\omega)$. Let
$$
U=\mathcal C_\infty(\omega),
\qquad
\rho(x,\omega):=\rho_{\mathcal C_\infty(\omega)}(x).
$$
with $\rho$ being the function constructed in Lemma \ref{lemma}. 
Let $\theta\in C^\infty([0,\infty))$ satisfy
$\theta(0)=0$, $\theta(r)=r \quad \text{for }0\le r\le \frac12$, 
$\theta(r)=1 \quad \text{for }r\ge 1$, 
$0\le \theta(r)\le 1$, 
$\theta(r)>0 \text{ for } r>0$ and 
$|\theta'(r)|+|\theta''(r)|\le C$.
Define
$$
a(x,\omega):=
\begin{cases}
\theta(\rho(x,\omega)) I_{d\times d},
& x\in \mathcal C_\infty(\omega),\\
0,
& x\notin \mathcal C_\infty(\omega).
\end{cases}
$$
Then the following properties hold:
\begin{itemize}
\item For every $x\in \R^d$ and $\omega\in \Omega$, $a(x,\omega)=a(0,\tau_x \omega)$.
\item The function $\xi(x,\omega):=\theta(\rho(x,\omega))$  satisfies the following:  
for $x\in \mathcal C_\infty(\omega)$, $\xi(x,\omega)\in(0,1]$ and for any $x\in \R^d$ and $\omega\in \Omega$, $\langle a(x,\omega)v,v\rangle
=\xi(x,\omega)|v|^2$ and $\xi(x,\omega)=\xi(0,\tau_x\omega)$. 
\item $\R^d\ni x \mapsto a(x,\omega)$ is globally Lipschitz, 
\item The map 
$$
\mathcal C_\infty(\omega)\ni x \mapsto \sigma(x,\omega):=\sqrt{\theta(\rho(x,\omega))}\, I_{d\times d}
$$ 
is locally Lipschitz on every set $\{x\in \mathcal C_\infty(\omega): d(x,\partial \mathcal C_\infty(\omega))\ge \varepsilon\}$, 
\item The map $x \mapsto \operatorname{div}a(x,\omega)=\nabla\rho(x,\omega)$ 
is bounded and locally Lipschitz away from the boundary, because 
\[
|D^2\rho(x,\omega)|\le C\, d(x,\partial \mathcal C_\infty(\omega))^{-1}.
\]
So $\nabla\rho$ is locally Lipschitz on every region at positive distance from $\partial \mathcal C_\infty(\omega)$.
\end{itemize} 
\end{cor}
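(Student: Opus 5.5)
The proof reduces each claimed property of $a$ to Lemma \ref{lemma} together with elementary properties of $\theta$. Everything is checked for a fixed $\omega$ with $U=\mathcal C_\infty(\omega)$. Throughout we write $d(x)=d_U(x)=\mathrm d(x,\partial\mathcal C_\infty(\omega))$ and $\rho(x)=\rho_U(x)$.

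\emph{Stationarity.} First note that $\mathcal C_\infty(\tau_x\omega)=\mathcal C_\infty(\omega)-x$, because $\mathcal C(\tau_x\omega)=\mathcal C(\omega)-x$ and the unique unbounded component is carried to the unique unbounded component. The translation covariance \eqref{rho tran}, applied with $z=-x$, then gives
\[
\rho(0,\tau_x\omega)=\rho_{U-x}(x-x)=\rho_U(x)=\rho(x,\omega).
\]
Also, $x\in\mathcal C_\infty(\omega)$ if and only if $0\in\mathcal C_\infty(\tau_x\omega)$. Together these give $a(x,\omega)=a(0,\tau_x\omega)$ and $\xi(x,\omega)=\xi(0,\tau_x\omega)$.

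\emph{Ellipticity.} The identity $\langle av,v\rangle=\xi|v|^2$ is immediate from $a=\xi I_{d\times d}$. The bound $\xi\in(0,1]$ on $U$ follows from $\rho>0$ in $U$ together with $0<\theta\le1$ on $(0,\infty)$.

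\emph{Global Lipschitz continuity of $a$.} Inside $U$, $\theta\circ\rho$ is Lipschitz because $|\theta'|\le C$ and $|\nabla\rho|\le C_1$. Lemma \ref{lemma} also gives that the extension of $\rho$ by $0$ is globally Lipschitz on $\R^d$. Since $\theta$ is Lipschitz with $\theta(0)=0$, the composition $\theta\circ\tilde\rho$ of $\theta$ with this extension $\tilde\rho$ is globally Lipschitz, and it coincides with $\xi$ extended by $0$. One small point needs care: $\rho\ge c\,d(x)$ shows that $\tilde\rho$ vanishes exactly off $U$, so the two definitions agree on $\partial U$ as well.

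\emph{Local Lipschitz continuity of $\sigma$.} On $\{d\ge\varepsilon\}$ we have $\rho\ge c\varepsilon$, hence $\theta(\rho)\ge\min\{c\varepsilon/2,\ \inf_{[c\varepsilon,\infty)}\theta\}>0$. Continuity and positivity of $\theta$ together with $\theta=1$ on $[1,\infty)$ make this infimum positive. Since $\sqrt{\cdot}$ is Lipschitz on $[m,1]$ for $m>0$, the matrix $\sigma=\sqrt{\theta(\rho)}\,I_{d\times d}$ is Lipschitz there.

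\emph{The divergence.} We compute $\operatorname{div}a=\nabla(\theta\circ\rho)=\theta'(\rho)\nabla\rho$. This is bounded by $C\cdot C_1$. The statement writes it as $\nabla\rho$, which is accurate where $\rho\le\frac12$ since $\theta'=1$ there; the plan proves the general form and notes this special case. Its derivative is
\[
\theta''(\rho)\nabla\rho\otimes\nabla\rho+\theta'(\rho)D^2\rho .
\]
By \eqref{rho est} with $|\alpha|=2$ we have $|D^2\rho|\le C_2\,d^{-1}$, so the derivative is bounded by $C+C\varepsilon^{-1}$ on $\{d\ge\varepsilon\}$. This gives local Lipschitz continuity away from the boundary; where the region is not convex, we argue on small balls contained in $\{d\ge\varepsilon/2\}$.

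\emph{Main obstacle.} The only delicate points are the measurability and consistency of $\omega\mapsto\rho(0,\omega)$ with the covariance, and the boundary behaviour in the Lipschitz claim. Both are already contained in Lemma \ref{lemma}; everything else is routine calculus.
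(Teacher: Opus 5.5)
Your proof is correct and follows the paper's argument essentially step for step: the covariance \eqref{rho tran} with $z=-x$ gives stationarity, and the bounds on $\theta'$ and $\nabla\rho$ together with the vanishing of $\rho$ at $\partial\mathcal C_\infty(\omega)$ give the global Lipschitz property. Positivity of $\theta(\rho)$ on $\{d\ge\varepsilon\}$ handles $\sigma$, and $D(\operatorname{div}a)=\theta''(\rho)\nabla\rho\otimes\nabla\rho+\theta'(\rho)D^2\rho$ with $|D^2\rho|\le C d^{-1}$ handles the divergence; like the paper's proof, you rightly work with $\theta'(\rho)\nabla\rho$ rather than the literal $\nabla\rho$ of the statement. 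The one inaccuracy is in your closing remark: Lemma \ref{lemma} is purely deterministic and says nothing about measurability of $\omega\mapsto\rho(0,\omega)$, but since the corollary does not claim measurability, the proof is unaffected.
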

\begin{proof} 
The stationarity follows easily from the translation covariance of $\rho$. Indeed, recall the action of 
the translation group $\tau_x\omega=\omega-x$. Therefore,
$\mathcal C_\infty(\tau_x\omega)
=\mathcal C_\infty(\omega)-x$. 
By translation covariance of $\rho$ from Lemma \ref{lemma} applied with $z=-x$,
$$\rho_{\mathcal C_\infty(\omega)-x}(0)
=
\rho_{\mathcal C_\infty(\omega)}(x), \quad\mbox{so}\quad \rho(0,\tau_x\omega)=\rho(x,\omega) \quad\mbox{and}\quad a(0,\tau_x\omega)=a(x,\omega). 
$$
This proves the stationarity. The second property is obvious from teh definition of the ellipticity $\xi$.

Next we show that map $x\mapsto a(x,\omega)$ is globally Lipschitz on $\mathbb R^d$.
Indeed, inside $\mathcal C_\infty(\omega)$,
$\nabla a(x,\omega)
=\theta'(\rho(x,\omega))\nabla\rho(x,\omega) I_{d\times d}$ 
which is bounded because $\theta'$ and $\nabla\rho$ are bounded.
Across the boundary, the extension by zero is Lipschitz. Indeed, for $x\in\mathcal C_\infty(\omega)$,
$|a(x,\omega)|
\le C \rho(x,\omega)
\le C\,\operatorname{dist}(x,\partial\mathcal C_\infty(\omega))$. 
Thus $a(x,\omega)\to 0$ at the boundary at least linearly, and the zero extension is globally Lipschitz.

Next we estbalish the local Lipschitz continuity of the square root 
$\sigma(x,\omega):=\sqrt{\theta(\rho(x,\omega))}\,I_{d\times d}$
for $x\in\mathcal C_\infty(\omega)$ on every set
$\left\{
x\in\mathcal C_\infty(\omega):
\operatorname{dist}(x,\partial\mathcal C_\infty(\omega))\ge \varepsilon
\right\}$. Indeed, by \eqref{rho est}, on such a set,
$\rho(x,\omega)\ge c\varepsilon>0$, and the map $r\mapsto \sqrt{\theta(r)}$ is smooth on intervals bounded away from $0$. 

Finally we show boundedness and local Lipschitz continuity of $\operatorname{div}a$. 
Inside $\mathcal C_\infty(\omega)$, $\operatorname{div}a(x,\omega)=
\theta'(\rho(x,\omega))\nabla\rho(x,\omega)$. This vector field is bounded because $\theta'$ and $\nabla\rho$ are bounded.

Moreover, it is locally Lipschitz away from the boundary. Indeed,
$$
D(\operatorname{div}a)
=
\theta''(\rho)\nabla\rho\otimes\nabla\rho
+
\theta'(\rho)D^2\rho.
$$
On every region at positive distance from $\partial\mathcal C_\infty(\omega)$,
$$
|D^2\rho(x,\omega)|
\le
C\,\operatorname{dist}(x,\partial\mathcal C_\infty(\omega))^{-1}
$$
is bounded. Therefore $\operatorname{div}a(\cdot,\omega)$ is locally Lipschitz away from the boundary.
\end{proof}

\section{Proof of Theorem \ref{theorem 2}}\label{sec proof thm2}

We now prove that, for every $\eps>0$, $T>0$, and $\omega\in\Omega_0$, the variational representation \eqref{eq:u-eps-def} defines a viscosity solution of the Hamilton--Jacobi--Bellman equation
\begin{equation}\label{eq-HJB}
\begin{cases}
\partial_t u_\eps
=
\dfrac{\eps}{2}\mathrm{div}\!\left(
a\!\left(\dfrac{x}{\eps},\omega\right)\nabla u_\eps
\right)
+
H\!\left(
\dfrac{x}{\eps},
\nabla u_\eps,
\omega
\right)
&
\text{in }
(0,T)\times \eps\mathcal C_\infty(\omega),
\\[0.3em]
u_\eps(0,x,\omega)=f(x)
&
\text{on }
\eps\mathcal C_\infty(\omega),
\end{cases}
\end{equation}
in the viscosity sense under Assumptions \ref{f1} and \ref{f2}--\ref{f4}. Observe that, in contrast to the classical setting of PDEs on bounded domains, no boundary condition is imposed on the lateral boundary
\[
(0,T)\times \eps\partial\mathcal C_\infty(\omega).
\]
This is a consequence of Theorem \ref{theorem 1}, which ensures that the controlled diffusion never reaches the boundary.

\medskip

First notice that, by Assumption \ref{f1}, the Hamilton--Jacobi--Bellman equation \eqref{eq-HJB} with $\eps=1$ can be written as
\begin{equation}\label{eq:HJB-general}
\begin{cases}
\partial_t u
=
\mathcal H(x,\nabla u,\mathrm{Hess}_x u)
&
\text{in }V,
\\
u(0,x)=f(x)
&
\text{on }\mathcal C_\infty(\omega),
\end{cases}
\end{equation}
where
\begin{equation}\label{V-calH}
\begin{aligned}
V
&:=
(0,T)\times\mathcal C_\infty(\omega),
\\
\mathcal H(x,p,A)
&:=
\frac12\mathrm{div}(a(x,\omega))\cdot p
+
\frac12\mathrm{Trace}(a(x,\omega)A)
+
H(x,p,\omega).
\end{aligned}
\end{equation}

We now recall the notion of viscosity solutions and introduce some notation. We follow the setup from \cite[Sec.~3.1 and Sec.~7.1]{T13}, which allows for degenerate diffusion matrices $\sigma$ provided they are locally Lipschitz in the spatial variable; see Assumption \ref{f1}(b).

\medskip

For a set $V\subset \R_+\times\R^d$, denote by
$\mathrm{USC}(\overline V)$ the set of upper semicontinuous functions
$w:\overline V\to\R\cup\{\infty\}$. Similarly,
$\mathrm{LSC}(\overline V)$ denotes the set of lower semicontinuous functions.

\begin{definition}
Let $V\subset \R_+\times\R^d$ and let
$u:\overline V\to\R$ be locally bounded.

The upper semicontinuous envelope of $u$ is defined by
\begin{equation}\label{eq:semic-envel-def}
u^*(x)
:=
\inf\big\{
w(x):
w\in \mathrm{USC}(\overline V)
\text{ and }w\geq u
\big\}.
\end{equation}
The lower semicontinuous envelope is defined by
\[
u_*:=-(-u)^*.
\]
\end{definition}

It follows immediately from the definition that
\[
u_*\leq u\leq u^*,
\]
with
\[
u_*\in\mathrm{LSC}(\overline V),
\qquad
u^*\in\mathrm{USC}(\overline V).
\]

\begin{definition}[Viscosity sub/supersolutions]
\label{def:SubSuperSol}
Let $V\subset \R_+\times\R^d$.

\begin{itemize}

\item
A locally bounded function
$u:\overline V\to\R$
is called a viscosity subsolution of
\begin{equation}\label{eq-visc-sub}
\partial_t u
=
\mathcal H(x,\nabla u,\mathrm{Hess}_x u)
\qquad
\text{in }V
\end{equation}
if, for every $(t_0,x_0)\in V$ and every smooth test function $\phi$ defined in a neighborhood of $(t_0,x_0)$ such that
\[
u^*-\phi
\]
has a local maximum at $(t_0,x_0)$, one has
\[
\partial_t\phi(t_0,x_0)
-
\mathcal H(
x_0,
\nabla\phi(t_0,x_0),
\mathrm{Hess}_x\phi(t_0,x_0)
)
\leq0.
\]

\item
A locally bounded function
$u:\overline V\to\R$
is called a viscosity supersolution of \eqref{eq-visc-sub} if, for every $(t_0,x_0)\in V$ and every smooth test function $\phi$ defined in a neighborhood of $(t_0,x_0)$ such that
\[
u_*-\phi
\]
has a local minimum at $(t_0,x_0)$, one has
\[
\partial_t\phi(t_0,x_0)
-
\mathcal H(
x_0,
\nabla\phi(t_0,x_0),
\mathrm{Hess}_x\phi(t_0,x_0)
)
\geq0.
\]

\item
A function is called a viscosity solution if it is both a viscosity subsolution and a viscosity supersolution.

\end{itemize}
\end{definition}

\medskip

\noindent{\bf Proof of Theorem \ref{theorem 2}.}
Fix $T>0$ and $\omega\in\Omega_0$, and set
\[
\mathbf S
:=
(0,T)\times\mathcal C_\infty(\omega). \qquad\mbox{and assume that the admissible controls take values in $U:=\R^d$.}
\]

We consider the controlled diffusion
\begin{equation}\label{eq:controlled-sde-sol}
\d X_t
=
\mathbf b(X_t,c_t)\,\d t
+
\sigma(X_t)\,\d B_t,
\quad \mbox{where}
\quad
\mathbf b(x,c)
:=
a(x)c+\mathrm{div}\,a(x).
\end{equation}
Since $\omega\in\Omega_0$ remains fixed throughout the proof, we suppress the dependence on $\omega$ in the notation.

By Theorem \ref{theorem 1}, for every admissible control
$c\in\mathbf C_T$ and every initial condition
$x\in\mathcal C_\infty(\omega)$, the corresponding diffusion remains in
$\mathcal C_\infty(\omega)$ for all times almost surely. Consequently, the stochastic control problem is well-defined on the random domain without imposing any boundary condition on
$\partial\mathcal C_\infty(\omega)$.

A special class of controls consists of Markovian controls of the form
\begin{equation}\label{Markovian control}
c(s)=\tilde c(s,X_s)
\end{equation}
for a measurable map
\[
\tilde c:[0,T]\times\mathcal C_\infty(\omega)\to\R^d.
\]

The cost functional is defined by
\begin{equation}\label{def cost}
\tilde J_f(t,x,c)
:=
E^P\bigg[
f(X_T^{t,x,c})
-
\int_t^T
L(X_s^{t,x,c},c_s)\,\d s
\bigg],
\end{equation}
where $(X_s^{t,x,c})_{s\geq t}$ denotes the solution of \eqref{eq:controlled-sde-sol} with initial condition
\[
X_t^{t,x,c}=x.
\]

The associated value function is
\begin{equation}\label{def value}
V_f(t,x)
:=
\sup_{c\in\mathbf C_T}
\tilde J_f(t,x,c).
\end{equation}

The corresponding second-order Hamiltonian is
\[
\begin{aligned}
\mathcal H(x,p,A)
&=
\sup_{u\in\R^d}
\bigg[
\mathbf b(x,u)\cdot p
-
L(x,u)
+
\frac12\mathrm{Trace}(a(x)A)
\bigg]
\\
&=
\frac12\mathrm{Trace}(a(x)A)
+
\mathrm{div}(a(x))\cdot p
+
H(x,p).
\end{aligned}
\]
Indeed,
\[
\mathbf b(x,u)\cdot p
=
\langle u,p\rangle_a
+
\mathrm{div}(a(x))\cdot p,
\]
and $H$ and $L$ are related by the Legendre transform with respect to the weighted pairing
$\langle\cdot,\cdot\rangle_a$.

The associated backward Hamilton--Jacobi equation is
\begin{equation}\label{backward}
\partial_tV
=
-\mathcal H(
x,
\nabla_xV,
\mathrm{Hess}_xV
).
\end{equation}

We now verify the assumptions required for \cite[Theorem 7.4]{T13}. By Assumptions \ref{f1}, \ref{f2}-\ref{f3},
\[
a,\quad
\sigma,\quad
\mathrm{div}\,a,
\quad\text{and}\quad
H
\]
have the required continuity and local Lipschitz regularity properties. Moreover, Theorem \ref{theorem 1} guarantees existence and uniqueness of strong solutions to the controlled SDE together with non-attainment of the boundary. It therefore remains only to verify the local boundedness of the value function.

\begin{lemma}\label{le:V.local.bdd}
Under Assumptions \ref{f1} and \ref{f2}--\ref{f4}, for every $T>0$ there exists a constant $\bar C_{T,f}$ such that
\begin{equation}\label{eq:V-locally-bounded}
|V_f(t,x)|
\leq
\bar C_{T,f}(1+|x|)
\qquad
\text{for all }
(t,x)\in[0,T]\times\mathcal C_\infty(\omega).
\end{equation}

Moreover, if $f$ is uniformly bounded, then there exists
$\bar C_{T,f}>0$ such that
\begin{equation}\label{eq:V-unif-bound}
|V_f(t,x)|
\leq
\bar C_{T,f}
\qquad
\text{for all }
(t,x)\in[0,T]\times\mathcal C_\infty(\omega).
\end{equation}
\end{lemma}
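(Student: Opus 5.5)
The plan is to prove \eqref{eq:V-locally-bounded} by a lower bound coming from the zero control and an upper bound coming from controlling the drift cost by the Lagrangian. Both bounds should be uniform over the random geometry, because they use only the global bounds in \ref{f1} and \ref{f2}. Throughout, Theorem \ref{theorem 1} guarantees that $X_s^{t,x,c}$ is well defined and stays in $\mathcal C_\infty(\omega)$, so every expectation below makes sense.

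\emph{Lower bound.} Take $c\equiv0\in\mathbf C_T$. By \eqref{eq:H1-L}, $L(y,0)\le c_{13}$ for every $y$. By \eqref{eq:H6}, with a fixed $\delta$ (say $\delta=1$), we have $|f(y)|\le |f(0)|+K_1|y|+1$. Hence
\[
V_f(t,x)\ge \tilde J_f(t,x,0)\ge -|f(0)|-1-K_1E|X_T^{t,x,0}|-c_{13}T .
\]
Since $|\mathrm{div}\,a|$ is bounded and $\sigma\sigma=2a\le 2c_5 I$, the It\^o isometry gives
\[
E|X_T^{t,x,0}|\le |x|+C T+C\sqrt T .
\]
This yields $V_f\ge -\bar C(1+|x|)$.

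\emph{Upper bound.} For an arbitrary $c\in\mathbf C_T$, write
\[
X_T=x+\int_t^T\sigma\,\d B+\int_t^T\mathrm{div}\,a\,\d s+\int_t^T a(X_s)c_s\,\d s .
\]
The key estimate is $|a c|\le \sqrt{c_5}\,\|c\|_a$, which follows from $a\le c_5 I$. Therefore
\[
|X_T|\le |x|+C(T+|M_T|)+\sqrt{c_5}\int_t^T\|c_s\|_a\,\d s ,
\]
where $M$ is the martingale part, with $E|M_T|\le C\sqrt T$. Plugging this into the linear growth of $f$ gives
\[
\tilde J_f\le C(1+|x|)+E\!\int_t^T\Big[K_1\sqrt{c_5}\,\|c_s\|_a-L(X_s,c_s)\Big]\d s .
\]
Now I use the coercivity \eqref{eq:H1-L}, $L\ge c_{10}\|q\|_a^{\alpha'}-c_{11}$ with $\alpha'>1$. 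Young's inequality gives
\[
K\|q\|_a-c_{10}\|q\|_a^{\alpha'}\le C_K ,
\]
so the integrand is bounded above by a constant and the integral by $C T$. Taking the supremum over $c$ gives $V_f\le \bar C_{T,f}(1+|x|)$.

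\emph{Bounded $f$.} If $|f|\le K_f$, the same two steps simplify. The lower bound is $V_f\ge -K_f-c_{13}T$. The upper bound is $V_f\le K_f+c_{11}T$, using only $L\ge -c_{11}$. This gives \eqref{eq:V-unif-bound} with no dependence on $x$.

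The main obstacle is the upper bound. The controls are unbounded and only square integrable, so the drift term must be absorbed into the cost. The point that makes this work is that both the drift $a c$ and the coercivity of $L$ are expressed through the same degenerate seminorm $\|\cdot\|_a$. Because of this, no lower bound on $\xi$ is needed, and the constants do not depend on the distance to $\partial\mathcal C_\infty$.

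One technical point remains. $L(X_s,c_s)$ may not be integrable, in which case $\tilde J_f=-\infty$, and the bound holds trivially. Otherwise the pointwise Young bound is applied inside the expectation.
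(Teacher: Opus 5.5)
Your proposal is correct and follows essentially the same route as the paper. The lower bound comes from the zero control, and the upper bound comes from absorbing the controlled drift into the cost: you use $|a c|\le \sqrt{c_5}\,\|c\|_a$ together with the coercivity $L\ge c_{10}\|q\|_a^{\alpha'}-c_{11}$, $\alpha'>1$, and Young's inequality.

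Beyond the paper, you account for the stochastic integral via the It\^o isometry, which the paper's displayed bound on $E^P|X_s^{t,x,c}|$ omits. You also give a direct bound $K_f+c_{11}T$ in the bounded-$f$ case, whereas the paper only says the same argument applies.
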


\begin{proof}
From Assumption \ref{f4}, we obtain
\begin{equation}\label{eq:f.bound}
|f(y)|
\leq
C_f^{(0)}+C_f^{(1)}|y|,
\qquad
y\in\R^d,
\end{equation}
where
\[
C_f^{(0)}:=1+|f(0)|,
\qquad
C_f^{(1)}:=K_1.
\]
If $f$ is uniformly bounded, we may instead take
\[
C_f^{(0)}=K_f,
\qquad
C_f^{(1)}=0.
\]

For the controlled process,
\[
\begin{aligned}
E^P[|X_s^{t,x,c}|]
\leq
|x|
+
\|\mathrm{div}\,a\|_\infty(s-t)
+
E^P\bigg[
\int_t^s
|a(X_r^{t,x,c})c_r|
\,\d r
\bigg].
\end{aligned}
\]

We first derive a lower bound by choosing $c\equiv0$. Using Assumption \ref{f2},
\[
L(x,0)\leq c_{13},
\]
and therefore
\[
\begin{aligned}
V_f(t,x)
\geq
\tilde J_f(t,x,0)
&=
E^P\bigg[
f(X_T^{t,x,0})
-
\int_t^TL(X_s^{t,x,0},0)\,\d s
\bigg]
\\
&\geq
-
C_f^{(0)}
-
C_f^{(1)}E^P[|X_T^{t,x,0}|]
-
c_{13}T
\geq
-
C_f^{(0)}
-
C_f^{(1)}|x|
-
C\,T.
\end{aligned}
\]

We next derive the upper bound. By Assumption \ref{f2},
\[
L(x,c)
\geq
c_{10}|c|_a^{\alpha'}
-c_{11},
\]
where $\alpha'>1$. Hence
\[
\begin{aligned}
f(X_T^{t,x,c})
-
\int_t^T
L(X_s^{t,x,c},c_s)\,\d s
\leq
C_f^{(0)}
+
C_f^{(1)}|X_T^{t,x,c}|
+
c_{11}T
-
c_{10}\int_t^T|c_s|_a^{\alpha'}\,\d s.
\end{aligned}
\]

Taking expectations and using the estimate
\[
|a(x)c|
\leq
\|a\|_\infty^{1/2}|c|_a,
\]
which follows from
\[
|a(x)c|^2
=
\langle a(x)c,c\rangle_a
\leq
\|a\|_\infty |c|_a^2,
\]
we obtain
\[
\begin{aligned}
\tilde J_f(t,x,c)
\leq
C_f^{(0)}
+
C_f^{(1)}|x|
+
C\,T
+
E^P\bigg[
\int_t^T
\big(
k_*|c_s|_a
-
c_{10}|c_s|_a^{\alpha'}
\big)
\,\d s
\bigg],
\end{aligned}
\]
where $k_*^2:=\|a\|_\infty$. Since $\alpha'>1$,
\[
k_*\gamma-c_{10}\gamma^{\alpha'}
\leq
\bar c
\qquad
\text{for all }\gamma\geq0,
\]
for a suitable constant $\bar c$. Therefore
\[
V_f(t,x)
\leq
C_f^{(0)}
+
C_f^{(1)}|x|
+
C\,T.
\]
This proves \eqref{eq:V-locally-bounded}. If $f$ is uniformly bounded, the same argument yields \eqref{eq:V-unif-bound}.
\end{proof}

We now conclude the proof of Theorem \ref{theorem 2}.

\begin{proof}[Proof of Theorem \ref{theorem 2}]
\begin{enumerate}

\item
By Lemma \ref{le:V.local.bdd}, the value function is locally bounded. Hence all assumptions of \cite[Theorem 7.4]{T13} are satisfied. Therefore $V_f$ is a viscosity solution of the backward equation
\begin{equation}\label{eq:backward-HJB}
\begin{cases}
\partial_tV
=
-\dfrac12
\mathrm{Trace}(a\,\mathrm{Hess}_xV)
-
\mathrm{div}(a)\cdot\nabla V
-
H(x,\nabla V)
&
\text{in }
(0,T)\times\mathcal C_\infty(\omega),
\\[0.3em]
V(T,x)=f(x)
&
\text{on }
\mathcal C_\infty(\omega).
\end{cases}
\end{equation}

Defining
\[
u(t,x):=V(T-t,x)
\]
transforms \eqref{eq:backward-HJB} into \eqref{eq-HJB} for $\eps=1$.

\item
The statement for general $\eps>0$ follows by  $f$ by 
$f_\eps(x):=\frac1\eps f(\eps x)$, and observing that Assumption \eqref{eq:H6} is invariant under this scaling.

\item
If $f$ is uniformly bounded, then by Lemma \ref{le:V.local.bdd},
$|V_{f_\eps}(t,x)|
\leq
K_{f_\eps}+CT$,
where $K_{f_\eps}
=
\frac1\eps K_f$. Hence
\[
\begin{aligned}
|u_\eps(t,x)|
=
\eps
\bigg|
V_{f_\eps}
\bigg(
\frac t\eps,
\frac x\eps
\bigg)
\bigg|
&\leq
\eps
\bigg(
\frac{K_f}{\eps}
+
C\frac T\eps
\bigg)
=
K_f+CT.
\end{aligned}
\]
Thus $u_\eps$ is uniformly bounded in $\eps>0$, 
$t\in[0,T]$ and $x\in\eps\mathcal C_\infty(\omega)$.
\end{enumerate}
\end{proof}

\section{Proof of Theorem \ref{theorem 3}}
\label{sec proof thm3}

\subsection{Verification for the Boolean model}

\noindent{\bf Verification of \ref{assump:est-erg}--\ref{assump:fkg}.}
We recall the definitions and notation from Section \ref{sec Boolean}. Assumption \ref{assump:est-erg} is a consequence of \cite[Propositions 2.6--2.7]{MR96}. Assumption \ref{assump:intensity} follows from \eqref{Poisson}, while \ref{assump:inf-comp} follows from \cite[Theorems 3.5--3.6]{MR96}. Property \ref{assump:chem-dist} can be found in \cite[Lemma 3.4]{CGY11}, and \ref{assump:fkg} appears in \cite[Theorem 2.2]{MR96}.

To verify \ref{assump:exp-dec-dist-indshift} in dimension $d\geq3$, let $\zeta>\zeta_c$ and choose $L>0$ sufficiently large so that in the slab
$\R^{d-1}\times[0,L]$
there almost surely exists an infinite cluster $\mathfrak c_\infty$. The existence of such a slab percolation regime follows from the fact that the critical intensity for $\R^d$ coincides with the limit of the critical intensities for
$\R^2\times[0,L]^{d-2}$
as $L\to\infty$; see \cite[Theorem 1]{T93}. By uniqueness of the infinite cluster in $\R^d$, the slab cluster $\mathfrak c_\infty$ is almost surely contained in ${\mathcal C_\infty}$.

Let $A_L$ denote the event that at least one point in the set
\[
\{je:j=1,\dots,L\}
\]
belongs to $\mathfrak c_\infty$. Then
\[
\{|\mathfrak v_e|\geq Lt\}\cap\{0\in{\mathcal C_\infty}\}
\subset
\bigcap_{k\leq \lfloor t\rfloor}\tau_{kLe}(A_L^c).
\]
Since the events in the intersection are independent, if
$p_L:=\P(A_L)$, then
\[
\P(|\mathfrak v_e|\geq Lt,\ 0\in{\mathcal C_\infty})
\leq
(1-p_L)^{\lfloor t\rfloor}
\]
for all $t>0$. This proves \ref{assump:exp-dec-dist-indshift}.

\subsubsection{Negative moment of the distance to the boundary}\label{subsec proof P6}

We now verify \ref{assump:dist} for the Boolean model. Let us first summarize the main idea before turning to the formal proof. Set
\[
B(\omega):=\R^d\setminus \mathcal C_\infty(\omega),
\]
which is the complement of the infinite cluster. For $x\in\R^d$ and $\eps>0$, define
\[
p(x,\eps)
=
\P\Bigl(
0<\dist(x,B(\omega))<\eps
\Bigr).
\]
Since
\[
\tau_x B(\omega)=B(\tau_x\omega),
\qquad\mbox{we have}\qquad 
\dist(x,B(\omega))
=
\dist(0,B(\tau_x\omega)).
\]
By stationarity of $\P$, it follows that
$p(x,\eps)=p(0,\eps)$.

The quantity $p(0,\eps)$ can be estimated using the geometry of the Boolean model and the properties of the underlying Poisson point process. The key point is that the probability that the origin lies within distance $\eps$ of the boundary is proportional to $\eps$ for small $\eps$.

Indeed, consider the situation in which there exists a unique Poisson point
$x_j$ with
\[
\frac12-\frac{\eps}{3}<|x_j|<\frac12,
\]
and no other Poisson points near the corresponding boundary point
\[
y_j:=
-\frac{1-2|x_j|}{2|x_j|}\,x_j
\in \partial B_{1/2}(x_j).
\]
Then
\[
|y_j|
=
\frac{1-2|x_j|}{2}
<
\eps,
\]
so the boundary of the occupied set lies within distance $\eps$ of the origin.

The probability of such a configuration is of order $\eps$, while the probability that the origin lies within distance $\eps$ of the boundary is at most proportional to the volume of a shell of thickness $\eps$. Consequently,
\[
p(0,\eps)\asymp \eps
\qquad
\text{as }\eps\downarrow0.
\]
This implies that
\[
\E_0\bigl[\dist(0,\partial\mathcal C_\infty(\omega))^{-\chi}\bigr]
<\infty
\]
if and only if $\chi<1$.

We now turn to the formal proof.

\begin{lemma}
Let $\P=\P^\zeta$ denote the supercritical Boolean model and define, for all $\omega\in\Omega_0$,
\[
B(\omega):=\mathbb R^d\setminus \mathcal C_\infty(\omega),
\qquad
\delta(\omega):=
\dist\bigl(0,B(\omega)\bigr)
=
\dist\bigl(0,\partial\mathcal C_\infty(\omega)\bigr).
\]
Then there exist constants $c,C,\varepsilon_0>0$ such that for all
$0<\varepsilon<\varepsilon_0$,
\[
c\,\varepsilon
\le
\P_0(\delta<\varepsilon)
\le
C\,\varepsilon.
\]
Consequently,
\[
\E_0[\delta^{-\chi}]<\infty
\qquad\Longleftrightarrow\qquad
\chi\in (0,1).
\]
\end{lemma}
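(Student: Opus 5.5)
We prove the two-sided tail bound first and then deduce the moment statement from the layer-cake formula. Since $\P_0=\P(\cdot\mid\Omega_0)$ with $\P(\Omega_0)>0$ by \ref{assump:inf-comp}, it suffices to show $c\eps\le \P(\Omega_0,\ \delta<\eps)\le C\eps$ under the unconditioned Poisson measure $\P^\zeta$. For the moment statement we write
\[
\E_0[\delta^{-\chi}]=\int_0^\infty \P_0(\delta^{-\chi}>s)\,\d s=\chi\int_0^\infty \eps^{-\chi-1}\P_0(\delta<\eps)\,\d\eps .
\]
On $(0,\eps_0)$ the integrand is comparable to $\eps^{-\chi}$, which is integrable if and only if $\chi<1$. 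On $[\eps_0,\infty)$ the integrand is bounded by $\eps^{-\chi-1}$, which is integrable. This gives the claimed equivalence.

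\emph{Upper bound.} If $0\in\mathcal C_\infty$ and $\delta<\eps$, then some point of $\partial\mathcal C_\infty\subset\partial\mathcal C(\omega)$ lies in $B_\eps(0)$. Every boundary point $y$ of the union of balls satisfies $|y-x|=\tfrac12$ for some $x\in\omega$. Hence some Poisson point lies in the shell $S_\eps:=\{\tfrac12-\eps\le|x|\le\tfrac12+\eps\}$. Moreover, because $0\in\mathcal C$, there must also be a point with $|x|<\tfrac12$, and in fact the relevant point must satisfy $\tfrac12-\eps<|x|<\tfrac12$. Either way, Markov's inequality (or the Mecke formula \eqref{eq:refined-camp-decomp}) gives
\[
\P(\delta<\eps,\ \Omega_0)\le \E[\#(\omega\cap S_\eps)]=\zeta|S_\eps|\le C\eps .
\]
This step is routine.

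\emph{Lower bound.} This is the main step. We construct an explicit event $E_\eps$ of probability $\gtrsim\eps$ that forces $0\in\mathcal C_\infty$ and $\delta<\eps$. Fix a unit vector $e$ and require the following:
\begin{itemize}
\item[(i)] $\omega$ has exactly one point $x_1$ in a small cone-shaped region $R_\eps\subset\{\tfrac12-\eps/3<|x|<\tfrac12,\ x/|x|\approx e\}$, whose volume is of order $\eps$.
\item[(ii)] $\omega$ has no points in a fixed bounded region $D$ around the antipodal side. Concretely, $D$ is the set of $x$ with $|x-y|<\tfrac12+\eta$ for $y$ near $-e\,\eps$-scale boundary points, excluding a neighbourhood of $x_1$. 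This condition guarantees that the point $y_1=-\frac{1-2|x_1|}{2|x_1|}x_1$ is not covered by any other ball.
\item[(iii)] The ball $B_{1/2}(x_1)$ is connected to the infinite cluster through the region on the $+e$ side, away from $D$.
\end{itemize}
Given (i)–(iii), we have $0\in B_{1/2}(x_1)\subset\mathcal C_\infty$ and $y_1\in\partial\mathcal C_\infty$ with $|y_1|<\eps$, so $\delta<\eps$.

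To estimate the probability of $E_\eps$, we argue as follows. Condition (i) has probability $\zeta|R_\eps|\e^{-\zeta|R_\eps|}\asymp\eps$. Condition (ii) is a decreasing event of fixed positive probability, and it is independent of (i) because the regions are disjoint. Condition (iii) involves the configuration outside $D\cup R_\eps$. Here we choose a fixed finite chain of balls: a positive-probability event requiring points in small fixed balls along a path from near $x_1$ to distance $M$. We intersect this with the event that this chain meets $\mathcal C_\infty$ computed on $(D\cup R_\eps)^c$. Since connection to infinity is increasing and the chain event is increasing, FKG \ref{assump:fkg} bounds the probability of (iii) below by a product of positive constants.

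The main obstacle is that conditions (ii) and (iii) pull in opposite monotonicity directions, and that percolation of the complement-restricted configuration must still have positive probability. We handle this by independence of the Poisson process on disjoint sets: (ii) lives on $D$, while (iii) lives on $D^c$ and is an increasing event there. We then use that removing the points in the bounded set $D$ changes the probability of reaching infinity from a fixed finite chain only by a positive factor, via a finite-energy argument for the supercritical Boolean model. Multiplying the three factors gives $\P(E_\eps)\ge c\eps$, hence $\P_0(\delta<\eps)\ge c\eps/\P(\Omega_0)\cdot\P(\Omega_0)\ge c\eps$.
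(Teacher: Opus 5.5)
Your proposal is correct and takes essentially the same route as the paper. It uses the same reduction to $\P(\Omega_0\cap\{\delta<\eps\})$ with the layer-cake formula, the same shell-counting upper bound, and the same lower-bound construction: exactly one Poisson point in a thin sector of $\{\tfrac12-\tfrac\eps3<|x|<\tfrac12\}$, an empty region around the origin of radius slightly above $\tfrac12$ so that $y_1$ is uncovered, and a chain linking $B_{1/2}(x_1)$ to infinity outside that region.

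The only variation is how you attach the chain to infinity. You combine the chain event with the connection-to-infinity event via FKG on the exterior. The paper instead places the chain in a bounded annulus and uses pure independence with an exterior event $H_D$. Both versions justify positivity of percolation that avoids a bounded hole only by a finite-energy/local-modification remark.

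One point needs tightening. Your $D$ must be a fixed region minus exactly $R_\eps$, not minus a larger neighbourhood of $x_1$. Any Poisson point of $B_{1/2}(y_1)\setminus R_\eps$, for instance one with $|x|<\tfrac12-\tfrac\eps3$ near direction $e$, would cover $y_1$.
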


\begin{proof}
For $x\in\mathbb R^d$ and $\varepsilon>0$, define
\begin{equation}\label{def p}
p(x,\varepsilon)
:=
\P\Bigl(
0<
\dist(x,B(\omega))
<
\varepsilon
\Bigr).
\end{equation}
Since $B(\tau_x\omega)=\tau_x B(\omega)$,
we have $\dist(x,B(\omega))
=
\dist(0,B(\tau_x\omega))$ and 
by stationarity of $\P$, $p(x,\varepsilon)=p(0,\varepsilon)$.
Moreover,
\[
\{\delta>0\}
=
\{0\in\mathcal C_\infty\}
=
\Omega_0.
\]
Therefore,
\begin{equation}\label{rel}
\begin{aligned}
p(0,\varepsilon)
=
\P\Bigl(
0<
\dist(0,B(\omega))
<
\varepsilon
\Bigr)
&=
\P\bigl(
\Omega_0\cap\{\delta<\varepsilon\}
\bigr)
=
\P(\Omega_0)\,
\P_0(\delta<\varepsilon).
\end{aligned}
\end{equation}
Thus it suffices to prove that there exist constants $c,C>0$ such that
\begin{equation}\label{asymp}
c\varepsilon
\le
p(0,\varepsilon)
\le
C\varepsilon.
\end{equation}

Indeed, using
\[
\E[X^{-\chi}]
=
\chi\int_0^\infty
\varepsilon^{-\chi-1}
\P(X<\varepsilon)
\,\d\varepsilon
\]
for positive random variables $X$, and choosing
$X(\omega)=\delta(\omega)$,
we obtain
$$
\E_0[\delta^{-\chi}]
=
\chi
\int_0^\infty
\varepsilon^{-\chi-1}
\P_0(\delta<\varepsilon)
\,\d\varepsilon.
$$
Then with $\P_0(\delta<\varepsilon)\asymp\varepsilon$ 
near $0$, the integral converges if and only if $\chi<1$.
It therefore remains to prove \eqref{asymp}.

\medskip

\noindent{\bf Upper bound.}
If $0<
\dist(0,B(\omega))
<
\varepsilon$,
then there exists
$y\in\partial\mathcal C_\infty(\omega)$
such that $|y|<\varepsilon$. Since
\[
\mathcal C_\infty(\omega)
\subset
\mathcal C(\omega)
=
\bigcup_{x\in\omega}B_{1/2}(x),
\]
such a boundary point must lie on the boundary of at least one grain
$B_{1/2}(x)$, so
$|y-x|=\frac12$. Therefore,
$\Bigl||x|-\frac12\Bigr|
\leq
|y|
<
\varepsilon$, 
which implies
$x\in
B_{1/2+\varepsilon}(0)
\setminus
B_{1/2-\varepsilon}(0)$.

Hence
\[
\{0<\dist(0,B(\omega))<\varepsilon\}
\subset
\Bigl\{
\omega\cap
\bigl(
B_{1/2+\varepsilon}(0)
\setminus
B_{1/2-\varepsilon}(0)
\bigr)
\neq\emptyset
\Bigr\}.
\]

Since $\omega$ is a Poisson point process,
\[
\begin{aligned}
p(0,\varepsilon)
&\le
1-
\exp\Bigl(
-\zeta
\bigl(
|B_{1/2+\varepsilon}|
-
|B_{1/2-\varepsilon}|
\bigr)
\Bigr)
\\
&\le
C_1\varepsilon,
\end{aligned}
\]
because $|B_{1/2+\varepsilon}|
-
|B_{1/2-\varepsilon}|
=
O(\varepsilon)$.

\medskip

\noindent{\bf Lower bound. Step 1.}
Choose a small open patch
\[
\Gamma\subset S^{d-1}
\]
around the vector $e_1$ such that for all sufficiently small $\varepsilon$,
\begin{equation}\label{Seps}
S_\varepsilon
:=
\Bigl\{
x\in\mathbb R^d:
\frac12-\frac{\varepsilon}{3}<|x|<\frac12,
\quad
\frac{x}{|x|}\in\Gamma
\Bigr\}
\subset V,
\qquad
|S_\varepsilon|
=
c_0\varepsilon+O(\varepsilon^2),
\quad
c_0>0.
\end{equation}

Define the event
\[
L_\varepsilon
:=
\Bigl\{
\#(\omega\cap S_\varepsilon)=1,
\quad
\#\bigl(
\omega\cap(B_{2/3}(0)\setminus S_\varepsilon)
\bigr)=0
\Bigr\}.
\]
and note that by independence of Poisson counts on disjoint sets,
\begin{equation}\label{Leps}
\begin{aligned}
\P(L_\varepsilon)
&=
\P\bigl(
\#(\omega\cap S_\varepsilon)=1
\bigr)
\,
\P\bigl(
\#(
\omega\cap(B_{2/3}(0)\setminus S_\varepsilon)
)=0
\bigr)
=
\zeta |S_\varepsilon|
\e^{-\zeta |S_\varepsilon|}
\,
\e^{-\zeta |B_{2/3}(0)\setminus S_\varepsilon|}
\\
&=
\zeta |S_\varepsilon|
\e^{-\zeta |B_{2/3}(0)|}
=
c_1\varepsilon+O(\varepsilon^2)
\ge
c_2\varepsilon
\end{aligned}
\end{equation}
for all sufficiently small $\varepsilon$. On $L_\varepsilon$, let $x_j$ denote the unique point in $S_\varepsilon$, and define
\[
y_j
:=
-\frac{1-2|x_j|}{2|x_j|}\,x_j.
\qquad\mbox{so that}\quad 
y_j\in \partial B_{1/2}(x_j), \quad\mbox{and}\quad 
|y_j|
=
\frac{1-2|x_j|}{2}
<
\varepsilon.
\]
Moreover, for $\varepsilon<1/6$,
$B_{1/2}(y_j)\subset B_{2/3}(0)$.
Since there are no Poisson points in
$B_{2/3}(0)\setminus S_\varepsilon$,
and $x_j$ is the unique point in $S_\varepsilon$, it follows that no Poisson point other than $x_j$ lies within distance $1/2$ of $y_j$. Hence
$$y_j\in\partial\mathcal C(\omega).
$$
Finally, since $|x_j|<\frac12$,
we have
\[
0\in B_{1/2}(x_j).
\]

\medskip

\noindent{\bf Step 2.}
Choose an open ball
\[
U_1\subset \mathbb R^d\setminus B_{2/3}(0)
\quad\mbox{such that}\quad 
B_{1/2}(x)\cap B_{1/2}(u)\neq\emptyset
\quad
\forall
x\in S_\varepsilon,
\quad
u\in U_1.
\]
This is possible because $S_\varepsilon$ is contained in a fixed small sector.
Next choose finitely many pairwise disjoint open balls
\[
U_1,\dots,U_N
\subset
\R^d\setminus B_{2/3}(0)
\]
and a distant box $Q'$ such that for every $k=1,\dots,N-1$,
\[
|u_k-u_{k+1}|<1
\quad\mbox{for all}\quad 
u_k\in U_k,
\quad
u_{k+1}\in U_{k+1},
\quad\mbox{and such that}\quad 
B_{1/2}(u_N)\cap Q'\neq\emptyset
\quad \forall u_N\in U_N.
\]
If
\begin{equation}\label{J}
J
:=
\bigcap_{k=1}^N
\{\omega\cap U_k\neq\emptyset\}, \quad\mbox{then}\quad \P(J)
=
\prod_{k=1}^N
\bigl(
1-\e^{-\zeta |U_k|}
\bigr)
>0.
\end{equation}
since the sets $U_k$ are pairwise disjoint. On the event $L_\varepsilon\cap J$, the ball
$B_{1/2}(x_j)$ is connected by a finite chain of overlapping occupied balls to the box $Q'$.

\medskip

\noindent{\bf Step 3.}
Let $D$ be a bounded region containing the origin together with all the sets
\[
U_1,\dots,U_N.
\]
Choose the box
$Q'\subset D^c$
sufficiently large so that the event
\begin{equation}\label{HD}
\begin{aligned}
&H_D
:=
\bigg\{
Q'
\text{ is connected to infinity by an occupied path contained entirely in }D^c
\bigg\} \\
&\qquad\qquad\qquad \mbox{satisfies $\P(H_D)>0$}. 
\end{aligned}
\end{equation}
Such an event exists by supercriticality together with the local modification property of the Boolean model. We note that (i) the event $L_\varepsilon$ depends only on the configuration inside
$B_{2/3}(0)$; (ii)
the event $J$ depends only on the configuration inside
$D\setminus B_{2/3}(0)$;
and (iii) the event $H_D$ depends only on the configuration outside $D$.
Hence these events are independent, and therefore
\[
\P(L_\varepsilon\cap J\cap H_D)
=
\P(L_\varepsilon)\P(J)\P(H_D).
\]
On the event
$L_\varepsilon\cap J\cap H_D$,
the ball $B_{1/2}(x_j)$
is connected to infinity through occupied balls. Hence
$B_{1/2}(x_j)\subset\mathcal C_\infty(\omega)$.
Since $y_j\in\partial B_{1/2}(x_j)$
and no other ball covers $y_j$, it follows that
$y_j\in\partial\mathcal C_\infty(\omega)$.
Because $|y_j|<\varepsilon$,
we conclude that
\[
\delta(\omega)<\varepsilon.
\]
Therefore,
\[
L_\varepsilon\cap J\cap H_D
\subset
\{\delta<\varepsilon\},
\]
and thus, by \eqref{Leps}, \eqref{J} and \eqref{HD}, 
\[
\P(\delta<\varepsilon)
\ge
\P(L_\varepsilon)\P(J)\P(H_D)
\ge
c\varepsilon
\]
for some constant $c>0$ and all sufficiently small $\varepsilon$. This proves \eqref{asymp}.
\end{proof}

\subsection{Verification for the continuum random cluster model}

We now verify that assumptions
\ref{assump:est-erg}--\ref{assump:inf-comp}
hold for the continuum random cluster model in the supercritical regime.
First, by construction, the CRCM is a stationary Gibbs point process. In particular, its law is invariant under translations of $\mathbb R^d$. By \cite[Theorem 1]{DH15}, there exists at least one stationary CRCM$(q,\zeta)$, which can moreover be chosen ergodic. This proves \ref{assump:est-erg}.

Assumption \ref{assump:intensity} follows from the local finite-energy property of the CRCM. More precisely, conditional on the configuration outside a bounded domain, the law inside the domain is absolutely continuous with respect to the Poisson point process with strictly positive density; see \cite[Lemma 7.1]{H18}. In particular, the CRCM has locally finite intensity.

Next, by \cite[Theorem 1]{H18}, any CRCM satisfies
\[
\P(N_\infty\le1)=1,
\]
that is, there exists almost surely at most one unbounded connected component.

Moreover, under the assumptions of \cite[Theorem 1]{H18} (in particular either $q\ge1$ together with an integrability condition on the radii, or $q<1$ with bounded radii), there exists $z_1<\infty$
such that for all
$z>z_1$, one has
\[
\P(N_\infty\ge1)>0.
\]
Since the event
$\{N_\infty=1\}$
is translation invariant, ergodicity implies
$\P(N_\infty=1)=1$
in the supercritical regime.

Finally, stationarity together with the existence of a unique infinite occupied component implies
\[
\P(0\in\mathcal C_\infty(\omega))>0.
\]
This proves assumption \ref{assump:inf-comp}.

\subsubsection{Sketch of ideas for verifying \ref{assump:chem-dist} for the random cluster model}
\label{subsec sketch P4}

We briefly sketch the ideas behind the proof of \ref{assump:chem-dist}. We first introduce some notation.

Given a finite subset
$\Lambda\subset\Z^d$,
its outer boundary is defined by
\[
\partial^{\mathrm{out}}\Lambda
:=
\{
x\in\Lambda^c:
\exists\,y\in\Lambda
\text{ with }
|x-y|_1=1
\}.
\]
If $B=\prod_{i=1}^d[a_i,b_i]$
is a box in $\R^d$, we say that a connected component
$C\subset B$
is {\it crossing for $B$} if for every
$i\in\{1,\dots,d\}$ there exist points
$x(i),y(i)\in C$
such that
\[
|x_i(i)-a_i|\le\frac12,
\qquad
|y_i(i)-b_i|\le\frac12.
\]

For each $M>0$, define a random field
$
\{X_z:z\in\Z^d\}
$
as follows. Let $B_z$ and $B_z^+$ denote concentric cubes centered at $z$ of radius
$M$ and $\frac{5M}{4}$ respectively. Define the event
$A_z$ to be the event that:
\begin{itemize}
\item
there exists a unique crossing component
$C_z$
in $B_z^+$;

\item
every sub-box of radius $M/4$ centered at
$z+h$ for 
$h\in\Z^d\cap[-M/2,M/2]^d$
contains a unique crossing component;

\item
all such crossing components are connected to $C_z$.
\end{itemize}

Setting 
\[
X_z:=\mathbf 1_{A_z}.
\]
if one can prove that for some integer $k\ge1$,
\begin{equation}\label{eq:eq24}
\lim_{M\to\infty}
\sup_{z\in\Z^d}
\mathrm{ess\,sup}\,
\P\Bigl(
X_z=1
\mid
\sigma(X_y:|y-z|_\infty>k)
\Bigr)
=
1,
\end{equation}
then, by \cite[Theorem 1.3]{LSS97}, for every $p\in(0,1)$ and sufficiently large $M=M(p)$, the product measure of i.i.d.\ Bernoulli random variables
\[
\{Y_z:z\in\Z^d\}
\]
with parameter $p$ is stochastically dominated by the law of
\[
\{X_z:z\in\Z^d\}.
\]

One may then follow the renormalization argument from \cite[pp.~160--161]{CGY11} to prove \ref{assump:chem-dist}. To verify \eqref{eq:eq24}, see for example \cite[Theorem 3.1]{P96} for $d\ge3$ and \cite[Theorem 9]{CM04} for $d=2$.

Finally, we mention that, at least for sufficiently large $\zeta$ (large enough to exceed an appropriate slab critical parameter), one expects that \ref{assump:exp-dec-dist-indshift} also holds for the CRCM. Regarding the FKG inequality, we could not find an explicit statement in the literature for the CRCM. However, adapting the arguments from \cite[Theorem 4.17]{G06} and \cite[Theorem 2.2]{MR96}, we expect that the desired inequality should remain valid.

\subsection{Some discrete percolation models with long range correlations}
Beyond the random cluster model, there are many examples on the discrete lattice which exhibit long-range correlations and satisfy assumptions analogous to \ref{assump:est-erg}-\ref{assump:fkg}. Examples include random interlacements, vacant sets of random interlacements, and level sets of the Gaussian free field in $d\geq3$; see \cite{Sz10,T09,T09aap,CP12,DRS14,PRS15}. For these models, we refer to \cite{BMO16}, where assumptions analogous to \ref{assump:est-erg}-\ref{assump:fkg} appear and are verified to some extent in the context of quenched large deviations for simple random walks on percolation clusters. We refer to \cite{KV86,SS04,BB07,MP07,PRS15} for earlier results on invariance principle.

\noindent{\bf Acknowledgement.} The third author would like to thank S.R.S. Varadhan for many insightful discussions. The research of the third author
is supported by Deutsche Forschungsgemeinschaft (DFG, German research
foundation), under Germany's Excellence Strategy EXC 2044-390685587,
Mathematics M\"unster: Dynamics-Geometry-Structure.

\end{document}